\newtheorem{cont}{cont}[section]
\newtheorem{theorem}[cont]{Theorem}
\newtheorem{proposition}[cont]{Proposition}
\newtheorem{lemma}[cont]{Lemma}
\newtheorem{corollary}[cont]{Corollary}
\newtheorem{defn}[cont]{Definition}
\newtheorem{pblm}[cont]{Problem}
\newtheorem*{Enunciato*}{Enunciato}
\newtheorem{conj}[cont]{Conjecture}
\numberwithin{equation}{section}
\newtheorem{remark}[cont]{Remark}
\newtheorem{example}[cont]{Example}
\newtheorem*{not*}{Notation}
\newcommand{\cO}{{\mathcal O}}
\newcommand{\shF}{\mathcal{F}}
\newcommand{\shL}{\mathcal{L}}
\newcommand{\shH}{\mathcal{H}}
\newcommand{\cA}{\mathcal{A}}
\newcommand{\cB}{\mathcal{B}}
\newcommand{\cC}{\mathcal{C}}
\newcommand{\cD}{\mathcal{D}}
\newcommand{\shI}{\mathcal{I}}
\newcommand{\cL}{\mathcal{L}}
\newcommand{\shE}{\mathcal{E}}
\newcommand{\PP}{\mathbb{P}}
\newcommand{\ZZ}{\mathbb{Z}}
\newcommand{\odi}[1]{\mathcal{O}_{#1}}
\newcommand{\arr}{\longrightarrow}
\DeclareMathOperator{\Hl}{H} \DeclareMathOperator{\h}{h}
\DeclareMathOperator{\Pic}{Pic} 
\DeclareMathOperator{\rk}{rk} \DeclareMathOperator{\Hom}{Hom}
\DeclareMathOperator{\Spec}{Spec}
\DeclareMathOperator{\depth}{depth}
\DeclareMathOperator{\Proj}{Proj} \DeclareMathOperator{\di}{dim}
\DeclareMathOperator{\codim}{codim}
\DeclareMathOperator{\coker}{coker}
\DeclareMathOperator{\Ext}{Ext} \DeclareMathOperator{\pd}{pd}
\DeclareMathOperator{\Tor}{Tor} 
 \DeclareMathOperator{\im}{im}
\begin{document}
\title{On the normal  sheaf of  determinantal varieties}

\author[Jan O.\ Kleppe, Rosa M.\ Mir\'o-Roig]{Jan O.\ kleppe, Rosa M.\
Mir\'o-Roig$^{*}$}
\address{Oslo and Akershus University College,
         Faculty of Technology, Art and Design,
         PB. 4, St. Olavs Plass, N-0130 Oslo,
         Norway}
\email{JanOddvar.Kleppe@hioa.no}
\address{Facultat de Matem\`atiques,
Departament d'Algebra i Geometria, Gran Via de les Corts Catalanes
585, 08007 Barcelona, SPAIN } \email{miro@ub.edu}

\date{\today}
\thanks{$^*$ Partially supported by MTMM2010-15256.}

\begin{abstract} Let $X$ be a  standard determinantal scheme $X\subset \PP^n$ of codimension $c$, i.e. a scheme defined by the maximal minors of a $t\times (t+c-1)$  homogeneous polynomial matrix $\cA$.  In this paper, we study the main features of its normal sheaf ${\mathcal N}_X$. We prove that under some mild restrictions:  (1) there exists a line bundle $\cL $ on $X\setminus Sing(X)$ such that ${\mathcal N}_X\otimes \cL$ is arithmetically Cohen-Macaulay and, even more, it is Ulrich whenever the entries of   $\cA$ are  linear forms (2) ${\mathcal N}_X$ is simple (hence, indecomposable) and, finally, (3) ${\mathcal N}_X$ is $\mu$-(semi)stable provided the entries of $\cA$ are linear forms.
\end{abstract}

\maketitle

\tableofcontents

\section{Introduction}
The goal of this paper is to study the main properties of the normal  sheaf of a determinantal scheme. Determinantal schemes have been a central topic in both Commutative Algebra and Algebraic Geometry. They are schemes defined by the vanishing of the $r \times r$ minors
of a homogeneous polynomial matrix; and as a classical examples of determinantal schemes we have rational normal scrolls, Segre varieties and Veronese varieties.
On the other hand, the normal sheaf ${\mathcal N}_X$ of a projective scheme $X\subset \PP^n$ has been intensively studied since it reflects many properties of the embedding.
For instance, if $X\subset \PP^n$ is a smooth projective variety of dimension $d\ge n/2$, $n\ge 5$
and  ${\mathcal N}_X$ splits into a sum of line bundles then $X$ is a complete intersection (cf. \cite{B}; Corollary 3.6).
This paper is entirely devoted to study the main features of the normal sheaf ${\mathcal N}_X$ of a (linear) standard determinantal scheme $X\subset \PP^n$, i.e. schemes defined by the maximal minors of a homogeneous polynomial matrix. In particular, we will study the Macaulayness, simplicity and $\mu$-stability of ${\mathcal N}_X$.

\vskip 2mm
Since the  seminal work by Horrocks characterizing ACM  sheaves on $\PP^n$ as  those that completely split into a sum of line bundles (cf. \cite{Hor}), a big amount of papers  devoted to  study ACM sheaves on  projective schemes has appeared. Recall that a sheaf $\shE$ on a projective scheme $X\subset \PP^n$ is arithmetically Cohen-Macaulay (ACM, for short) if it is locally Cohen-Macaulay and $\Hl ^{i}_*(\shE)=\sum_{t\in \ZZ}\Hl ^{i}(X,\shE(t))=0$  for $0<i<\di X$.
 It turns out that a natural way to measure the complexity of a projective scheme is to ask for the families of ACM sheaves that it supports.
 Mimicking an analogous trichotomy in Representation Theory, in \cite{DG} it was proposed a classification of ACM projective varieties as \emph{finite, tame or wild}  according to the complexity of their associated category of ACM vector bundles  and it was proved that this trichotomy is exhaustive for the case of ACM curves: rational curves are finite, elliptic curves are tame and curves of higher genus are wild.
Unfortunately very little is known for varieties of higher dimension.

\vskip 2mm Among ACM vector bundles $\shE $ on a given variety $X$, it is interesting to spot a very important subclass for which its associated module $\oplus_{t}\Hl^0(X,\shE(t))$ has the maximal number of generators, which turns out to be $\deg(X)\rk(\shE)$.
The algebraic counterpart has also arisen a lot of interest. In fact, Ulrich proved
that for a local ring $R$ the minimal number $m(M)$ of generators of a Maximal
Cohen-Macaulay (MCM) module $M$ is bounded by $m (M)\le e(R)\rk(M)$ where
$e(R)$ denotes de multiplicity of $R$ (cf. \cite{Ulr}). MCM modules attaining
this bound are called Ulrich modules. These algebraic considerations prompted
to define, for a projective scheme $X\subset \PP^n$, a sheaf $\shE$ on $X$ to
be Ulrich if it is ACM and the associated graded module $\Hl ^0_*(\shE)$ is
Ulrich. When $\shE$ is initialized (i.e. $\Hl^0(\shE)\ne 0$ but
$\Hl^0(\shE(-1))=0$), the last condition is equivalent to $\di
\Hl^0(X,\shE)=\deg(X)\rk(\shE)$.
The search of Ulrich sheaves on a particular variety is a challenging problem. In fact, few examples of varieties supporting Ulrich sheaves are known, although in \cite{ESW} it has been conjectured that any variety has an Ulrich sheaf. See
\cite{BHU}; Proposition 2.8, for the existence of Ulrich bundles of rank one on linear standard determinantal schemes.
 Moreover, the recent interest in the existence of Ulrich sheaves relies among other things on the fact  that a $d$-dimensional variety $X\subset \PP^n$ supports an Ulrich sheaf (bundle)  if and only if the cone of cohomology tables of coherent sheaves (resp. vector bundles) on $X$ coincides with the cone of cohomology tables of coherent sheaves (resp. vector bundles) on  $\PP^d$ (\cite{ES}; Theorem 4.2).

\vskip 2mm
In this paper, we will address the following longstanding problems:

\begin{pblm} \label{pblm} Let $X\subset \PP^n$ be a (linear) standard determinantal scheme of codimension $c$ and let ${\mathcal
N}_X$ be its normal sheaf:
\begin{itemize} \item[(1)] Is there any invertible sheaf $\shL$ on $X\setminus
Sing(X)$ such that ${\mathcal
N}_X\otimes \shL$ is ACM (resp. Ulrich)? If so, how does the minimal free
$\odi{\PP^n}$-resolution of ${\mathcal
N}_X\otimes \shL$ looks like?
\item[(2)]    Is there any invertible sheaf $\shL'$ on $X\setminus Sing(X)$ such
 that $\wedge ^q{\mathcal
N}_X\otimes \shL'$ is ACM (resp. Ulrich)?
\item[(3)] Is ${\mathcal
N}_X$ simple? or, at least indecomposable?
\item[(4)]Is ${\mathcal N}_X$ $\mu$-(semi)stable?
\end{itemize}
\end{pblm}

In our approach, we often use the well known fact that a locally free sheaf on
the smooth locus of a reduced normal Cohen-Macaulay variety $X$ extends
uniquely to a reflexive sheaf on $X$, or more precisely that the morphism
\eqref{NM} of this paper is an isomorphism under appropriate depth conditions.
We will start our work analyzing whether the normal sheaf ${\mathcal N}_X$
(and its exterior powers) to a standard determinantal scheme is ACM. It is
well known that the normal sheaf ${\mathcal N}_X$ to a standard determinantal
scheme $X\subset \PP^n$ of codimension $c$ is ACM if $1\le c\le 2$ but it is
no longer true for $c\ge 3$. Our first goal will be to prove that under some
weak restriction there exists an invertible sheaf $\shL$ on $X\setminus
Sing(X)$ (i.e. a coherent $\odi{X}$-module that is invertible on $X\setminus
Sing(X)$) such that ${\mathcal N}_X\otimes \shL$ is ACM. Our next aim will be
to prove that for linear standard determinantal schemes $X\subset \PP^n$
satisfying some mild hypothesis, ${\mathcal N}_X\otimes \shL$ is not only ACM
but also Ulrich.

\vskip 2mm As Hartshorne and Casanellas pointed out in \cite{CH} there are few examples of indecomposable ACM bundles of arbitrarily high rank. So, once we know the Macaulayness of a suitable twist  ${\mathcal
N}_X\otimes \shL$ of the normal sheaf  ${\mathcal
N}_X$ to a standard determinantal scheme $X\subset \PP^n$ we are led to ask if it is indecomposable. Concerning Problem \ref{pblm} (3),  we are able to prove that under some weak conditions ${\mathcal
N}_X$ is indecomposable  and, even more, it is simple. As it is explained in section 4, this last  property works in a much more general set up (cf. Theorem \ref{simple}).
Another challenging problem is the existence of  $\mu$-(semi)stable  ACM bundles  of high rank on projective schemes since $\mu$-(semi)stable ACM bundles of higher ranks are essentially unknown due to the lack
of   criteria to check it when the rank is high. In the last part of the paper, we analyze the $\mu$-(semi)stability of the normal sheaf ${\mathcal
N}_X$ to a standard determinantal scheme $X$. We prove that  it is $\mu$-semistable provided  the homogeneous matrix associated to $X$ has linear entries and $\mu$-stable if, in addition, we assume that $X$ has codimension 2.

\vskip 2mm Next we outline the structure of the paper. Section 2 provides the background and basic results on (linear) standard determinantal schemes and the associated complexes needed in the sequel as well as the definition and main properties on ACM sheaves and Ulrich sheaves on a projective scheme. We refer to \cite{b-v} and \cite{eise} for more details on standard determinantal schemes and to \cite{CMP}, \cite{ESW}, \cite{MP2012} and \cite{MR} for more details on Ulrich sheaves. In section 3, we address Problems \ref{pblm} (1) and (2) and we prove our main results. Given a standard determinantal scheme $X\subset \PP^n$ of codimension $c$
associated to a homogeneous $t \times (t+c-1)$ polynomial matrix $\cA$, we denote by $M$ the cokernel of the graded morphism defined by $\cA$ and  we prove that, under some mild assumption,
${\mathcal N}_X\otimes \widetilde{S_{c-2}M}$ is ACM (cf. Theorem \ref{normalresolution}). As a by-product we obtain an  $\odi{\PP^n}$-resolution of ${\mathcal
N}_X\otimes \shL$  which turns out to be pure and linear in the case of linear standard determinantal schemes (cf. Theorem \ref{normalresolution} and Corollary \ref{res}). To prove it we use a generalization of the mapping cone process together with a careful analysis of possible cancelations of repeated summands in the mapping cone construction. Parts of the results of this section are inspired by ideas developed in \cite{K2011} leading to isomorphisms
$\Ext^1(\widetilde{M},\widetilde{S_{i}M})\cong \Hom ({\mathcal I}_X/{\mathcal I}_X^2,\widetilde{S_{i-1}M})$ and to good estimates  of the depth of $\Ext^1(M,S_{i}M)$. We also generalize these results to higher $\Ext $-groups and  to exterior powers of twisted normal sheaves (cf. Theorem \ref{newthm}).
 At the end of this section, we guess a
result, analogous to  Theorem \ref{newthm} for the exterior power of the normal sheaf (cf. Conjectures \ref{conjetura} and \ref{conjetura2}) based on our computations with Macaulay2.
Section 4 deals with Problem \ref{pblm} (3) and we determine conditions under which  the normal sheaf    ${\mathcal N}_X$ of a standard determinantal scheme $X\subset \PP^n$ is simple and, hence, indecomposable (cf. Theorem \ref{simpleNormal}).  We also determine the cohomology  and hence the depth of the conormal sheaf of $X$ (cf. Proposition \ref{depthconormal} and Corollaries \ref{vanis2} and \ref{vanis3}).
 Finally, in section 5, we face Problem  \ref{pblm} (4) and we prove that  the normal sheaf    ${\mathcal N}_X$ of a standard determinantal scheme $X\subset \PP^n$ of codimension $c$ associated to a $t\times (t+c-1)$ matrix with linear entries is  always $\mu$-semistable (cf. Theorem \ref{semistablenormal}) and even more it is $\mu$-stable when $c=2$ and $n\ge 4$ (cf. Theorem \ref{stablenormal}).

 \vskip 2mm \noindent {\bf Remark.} This version on the arXiv makes a
 correction to Proposition 3.9 of previous versions on the arXiv, as well as
 to the published version in Crelle's journal, see Remark~\ref{correctionC14}
 for details.

 \vskip 2mm \noindent {\bf Acknowledgement.} The second author would like to thank A. Conca, L. Costa and J. Pons-Llopis for useful discussions on the subject. She also thanks Oslo and Akershus University College for  its hospitality during her visit to Oslo in October 2011. The first author thanks the University of Barcelona for  its hospitality during his visit to Barcelona in June 2012.

 We also thank the referees for their comments and for pointing out that our first formulation of Lemma 3.1 was inaccurate.

\vskip 2mm
\noindent \underline{Notation.} Throughout this paper $K$ will be an algebraically closed field of characteristic zero, $R=K[x_0, x_1, \cdots ,x_n]$, $\mathfrak{m}=(x_0, \ldots,x_n)$ and $\PP^n=\Proj(R)$. Given a closed subscheme $X\subset \PP^n$, we denote by  ${\mathcal I}_X$ its ideal sheaf,  ${\mathcal
N}_X$ its normal sheaf and $ I(X)=\Hl^0_{*}(\PP^n, {\mathcal I}_X)$
its saturated homogeneous ideal unless $X=\emptyset $, in which case we let $I(X)=\mathfrak{m}$. If $X$ is equidimensional and
Cohen-Macaulay of codimension $c$, we set $\omega_X ={\mathcal
E}xt^c_{{\mathcal O}_{\PP^n}} ({\mathcal O}_X,{\mathcal
O}_{\PP^n})(-n-1)$ to be its canonical sheaf. Given a  coherent sheaf $\shE$ on $X$ we denote the twisted sheaf $\shE\otimes\odi{X}(l)$ by $\shE(l)$. As usual, $\Hl^i(X,\shE)$ stands for the cohomology groups, $\h^i(X,\shE)$ for their dimension and $\Hl^i_*(X,\shE)=\oplus _{l \in \ZZ}\Hl^i(X,\shE(l))$.

For any graded quotient $A$ of $R$ of codimension $c$,
we let $I_A=\ker(R\twoheadrightarrow A)$ and we let $N_A=\Hom_R(I_A,A)$ be
the normal module. If $A$ is Cohen-Macaulay of codimension $c$, we
let  $K_A=\Ext^c_R (A,R)(-n-1)$ be its canonical module. When we
write $X=\Proj(A)$, we let $A=R_X:=R/I(X)$ and $K_X=K_A$. If $M$ is a
finitely generated graded $A$-module, let $\depth_{J}{M}$ denote
the length of a maximal $M$-sequence in a homogeneous ideal $J$
and let $\depth {M} = \depth_{\mathfrak m}{ M}$.


\section{Preliminaries}
For convenience of the reader we include in this section the
background and basic results on (linear) standard determinantal varieties as well as on arithmetically Cohen-Macaulay sheaves and Ulrich sheaves needed in the sequel.

\subsection{Determinantal varieties}

Let us start recalling the definition of
arithmetically Cohen-Macaulay subschemes of $\PP^n$.

\begin{defn}\rm  A
subscheme  $X\subset \PP^n$  is said to be arithmetically
Cohen-Macaulay (briefly, ACM)  if
 its homogeneous
coordinate ring $R/I(X)$ is a Cohen-Macaulay ring, i.e. $\depth
R/I(X)=\dim R/I(X)$.
\end{defn}

Thanks to the graded version of the
Auslander-Buchsbaum formula (for any finitely generated
$R$-module $M$): $$\pd M=n+1-\depth M,$$ we deduce that a
subscheme $X\subset \PP^n$ is ACM if and only if
$ \pd R/I(X)=\codim X.$
Hence, if $X\subset \PP^n$ is a codimension $c$ ACM subscheme, a
graded minimal free $R$-resolution of $I(X)$ is of the form:
$$ 0\rightarrow F_c   \stackrel{\varphi _c}{\longrightarrow}  F_{c-1}\stackrel{\varphi _{c-1}}{\longrightarrow}  \cdots
\stackrel{\varphi _2}{\longrightarrow}  F_1 \stackrel{\varphi _1}{\longrightarrow}  I(X)\rightarrow 0$$ where
$F_{i}=\oplus _{j\in \ZZ}R(-j)^{\beta_{i,j}}$, $i=1, \cdots ,c $
(in this setting, minimal means that $\im (\varphi _{i})\subset
\mathfrak{m}F_{i-1}$).

We now collect the results on
standard
determinantal schemes and the associated complexes needed  in the sequel and we refer to \cite{b-v} and \cite{eise} for more details.

\begin{defn}
\rm If  $\cA$ is a homogeneous matrix, we denote by  $I(\cA)$ the
ideal of $R$ generated by the  maximal minors of $A$ and by $I_j(\cA)$
the ideal generated by the $j \times j$ minors of $\cA$. A codimension $c$
subscheme
$X\subset
\PP^n$ is called a
\emph{standard determinantal} scheme if $I(X)=I(\cA)$ for some
$t\times (t+c-1)$ homogeneous matrix $\cA$. In addition, we will say that  $X$  is  a \emph{linear standard  determinantal scheme} if all entries of $\cA$ are linear forms.
 \end{defn}

 As examples of standard determinantal schemes we have any ACM variety $X\subset \PP^n$ of codimension 2 and as examples of linear determinantal schemes we have, for instance, rational normal curves, Segre varieties and rational normal scrolls.
 Now we are going to describe some complexes associated to a
codimension $c$ standard determinantal scheme $X$. To this end, we
denote by
$\varphi
:F\longrightarrow G$ the morphism of free graded $R$-modules of
rank $t+c-1$ and $t$, defined by the homogeneous matrix $\cA$ of $X$.
We denote by ${\cC}_i(\varphi )$ the (generalized) Koszul complex:
$${\cC}_i(\varphi): \  0 \rightarrow \wedge^{i}F \otimes S_{0}(G)\rightarrow
\wedge^{i-1} F
\otimes S
_{1}( G)\rightarrow \ldots \rightarrow \wedge ^{0} F \otimes S_i (G) \rightarrow 0$$
where $\wedge ^{i}F$ stands for the $i$-th exterior power of $F$ and $S_i (G)$
denotes the $i$-th symmetric power of $G$.
 Let ${\cC}_i(\varphi)^*$ be the $R$-dual of
${\cC}_i(\varphi)$ . The dual map $\varphi^*$ induces graded
morphisms $$\mu_i:\wedge ^{t+i}F\otimes
\wedge^tG^*\rightarrow \wedge^{i}F.$$
They can be used to splice the complexes ${\cC}_{c-i-1}(\varphi)^*\otimes \wedge^{t+c-1}F\otimes \wedge^tG^*$ and
 ${\cC}_i(\varphi)$ to a complex ${\cD}_i(\varphi):$
 \begin{equation}\label{splice}
0 \rightarrow \wedge^{t+c-1}F \otimes S_{c-i-1}(G)^*\otimes
\wedge^tG^*\rightarrow \wedge^{t+c-2} F \otimes S _{c-i-2}(G)^*\otimes
\wedge ^tG^*\rightarrow \ldots \rightarrow \end{equation}
$$\wedge^{t+i}F \otimes
S_{0}(G)^*\otimes \wedge^tG^*\stackrel{\epsilon _{i}}{\longrightarrow} \wedge^{i} F \otimes S _{0}(G)
\rightarrow \wedge ^{i-1} F \otimes S_1(G)\rightarrow \ldots \rightarrow \wedge^0F\otimes
S_i(G)\rightarrow 0 .$$
The complex  ${\cD}_0(\varphi)$ is called the Eagon-Northcott
complex and the complex  ${\cD}_1(\varphi)$ is called the
Buchsbaum-Rim complex. Let us rename the complex  ${\cC}_c(\varphi)$ as  ${\cD}_c(\varphi)$. Denote by $I_m(\varphi)$ the
ideal generated by the $m\times m$ minors of the matrix $\cA$
representing $\varphi$. Then, we have the following well known result:

\begin{proposition}\label{resol} Let $X \subset \PP^n$ be a standard determinantal subscheme
of codimension $c$ associated to a graded minimal (i.e.
$\im(\varphi)\subset \mathfrak{m}G$) morphism $\varphi: F\rightarrow G$ of free
$R$-modules of rank $t+c-1$ and $t$, respectively. Set $M= Coker(
\varphi )$. Then, it holds:

(i) ${\cD}_i(\varphi)$ is acyclic for $-1\le i \le c$.

(ii) ${\cD}_0(\varphi)$ is a minimal free graded $R$-resolution of
$R/I(X)$  and  ${\cD}_i(\varphi)$ is a minimal free graded
$R$-resolution of length $c$ of $S_i(M)$, $1\le i \le c$ .

 (ii) Up to twist, $K_X \cong
S_{c-1}(M)$. So, up to twist,  $\mathcal{D}_{c-1}(\varphi)$ is a minimal
free graded $R$-module resolution of $K_X$.

(iv) ${\cD}_i(\varphi)$ is a minimal free graded $R$-resolution of
$S_i(M)$ for $c+1\le i$ whenever $depth_{I_m(\varphi)}R\ge
t+c-m$ for every $m$ such that $t\ge m \ge max(1,t+c-i)$.
\end{proposition}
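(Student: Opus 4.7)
The backbone of the proof is the Buchsbaum--Eisenbud acyclicity criterion, complemented by duality to handle the canonical module. For (i), I would verify, term by term in the spliced complex (\ref{splice}), the two Buchsbaum--Eisenbud conditions $\rk(d_k) + \rk(d_{k+1}) = \rk F_k$ and $\depth_{I_{r_k}(d_k)} R \ge k$. In the range $-1 \le i \le c$, every differential is built from $\varphi$, $\varphi^*$ and the identification $\mu_i$, and the ideal of maximal minors of each differential reduces to $I_t(\varphi) = I(X)$. Thus the only depth inequality required is $\depth_{I(X)} R \ge c$, which holds because $X$ has codimension $c$ in the Cohen--Macaulay ring $R$.

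Once acyclicity is granted, (ii) is a direct inspection of the last two terms. For $i \ge 1$, these are $\wedge^1 F \otimes S_{i-1}(G) \to S_i(G) \to 0$, and the cokernel of the first map is, by the functorial definition of symmetric powers applied to $F \stackrel{\varphi}{\rightarrow} G \to M \to 0$, precisely $S_i(M)$. For $i = 0$, the last map of $\cD_0(\varphi)$ is the Eagon--Northcott map $\wedge^t F \otimes \wedge^t G^* \to R$, whose image is $I_t(\varphi) = I(X)$, yielding $H_0 = R/I(X)$. Minimality propagates automatically from $\im(\varphi) \subset \mathfrak{m} G$ since every differential is built functorially out of $\varphi$ and $\varphi^*$. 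For the third part (mis-numbered (ii) in the statement), I would invoke $K_X = \Ext^c_R(R/I(X), R)(-n-1)$, compute this $\Ext$ by $R$-dualizing $\cD_0(\varphi)$, and exploit a self-duality of the family: after twisting by $\wedge^{t+c-1}F^* \otimes \wedge^t G$ and reindexing, $\cD_0(\varphi)^*$ is termwise isomorphic to $\cD_{c-1}(\varphi)$. Consequently $K_X$ is, up to twist, the $H_0$ of $\cD_{c-1}(\varphi)$, which by the previous point equals $S_{c-1}(M)$.

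For (iv), the identification of $H_0$ from (ii) still applies, so only acyclicity is at stake. When $i \ge c+1$ the splice is strictly longer than in (i), and the Buchsbaum--Eisenbud criterion now introduces depth conditions on ideals of smaller minors $I_m(\varphi)$ with $m < t$. A bookkeeping computation shows that the required inequalities translate term-by-term into the hypothesis $\depth_{I_m(\varphi)} R \ge t+c-m$ for $\max(1, t+c-i) \le m \le t$. The principal difficulty throughout is the combinatorial accounting needed to identify the differentials of $\cD_i(\varphi)$ with the correct wedge- and symmetric-power combinations of $\varphi$, $\varphi^*$ and $\mu_i$, and in particular to recognize the self-duality used in the canonical-module identification; once this bookkeeping is set up as in \cite{b-v}, each individual depth verification is routine.
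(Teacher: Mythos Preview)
The paper does not give an independent proof of this proposition: it simply cites \cite{b-v}, Theorem~2.20, and \cite{eise}, Theorem~A2.10 together with Corollaries~A2.12 and~A2.13. Your sketch is precisely the argument carried out in those references---acyclicity via the Buchsbaum--Eisenbud criterion, identification of $H_0$ by inspection of the last two terms, and the duality $\cD_0(\varphi)^{*}\cong \cD_{c-1}(\varphi)$ (up to twist) for the canonical module---so you are taking the same route.

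One small imprecision: for $i\ge c$ there is no splicing at all (the left-hand portion $\cC_{c-i-1}(\varphi)^{*}$ is empty), and $\cD_i(\varphi)$ is just the generalized Koszul complex $\cC_i(\varphi)$ of length $\min(i,\,t+c-1)$. So in (iv) it is the Koszul part itself that is longer, not the splice. Also, it is slightly more accurate to say that the ideals of appropriate-size minors of the differentials contain a power of $I_t(\varphi)$ (equivalently, have the same radical) rather than that they ``reduce to'' $I_t(\varphi)$; this is what is actually verified in \cite{b-v} and \cite{eise}, and it is of course enough for the depth inequalities.
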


\begin{proof}
See, for instance \cite{b-v}; Theorem 2.20  and \cite{eise}; Theorem A2.10 and Corollaries A2.12 and
A2.13.
\end{proof}

It immediately follows from Proposition 2.3 (ii) that standard determinantal schemes are ACM.
Let us also recall
 the following useful comparison of cohomology
groups. If $Z\subset X$ is a closed subset such that $U=X\setminus Z$ is a local complete intersection,  $L$ and $N$ are finitely generated $R/I(X)$-modules, $\widetilde{N}$ is locally free on
$U$ and
 $\depth_{I(Z)}L\ge r+1$, then the natural map
\begin{equation} \label{NM}
\Ext^{i}_{R/I(X)}(N,L)\longrightarrow
\Hl_{*}^{i}(U,{\mathcal H}om_{\odi{X}}(\widetilde{N},\widetilde{L}))
\end{equation}
is an isomorphism, (resp. an injection) for $i<r$ (resp. $i=r$)
cf. \cite{SGA2}, expos\'{e} VI. Recall that we interpret $I(Z)$ as $\mathfrak{m}$ if $Z=\emptyset $.

 We end this subsection describing the Picard group of a smooth standard determinantal scheme $X$.
 Assume that $X\subset \PP^n$ is given by the
maximal minors of a $t \times (t+c-1)$  homogeneous matrix $\cA$
representing a homomorphism $\varphi$ of free graded R-modules $$
\varphi: F= \bigoplus_{i=1}^{t+c-1} R(-a_i) \longrightarrow G=
\bigoplus_{j=1}^{t} R(-b_j) .  $$ Without loss of generality, we may
assume $a_1 \leq a_2 \leq \ldots \leq a_{t+c-1}$ and $b_1 \leq b_2
\leq \ldots \leq b_t$. Denote by $H$ the general hyperplane section of $X$ and by $Z\subset X$ the codimension 1 subscheme of $X$ defined by the maximal minors of the $(t-1)\times (t+c-1)$ matrix obtained deleting the last row of $\cA$. The following theorem computes the Picard
group of $X$. Indeed, we have:

\begin{theorem}\label{picard}  Let $X\subset \PP^n$ be a smooth standard
determinantal scheme of codimension $c\ge 2$. Set $\ell :=\sum _{j=1}^{t+c-1}a_j-\sum _{i=1}^tb_i$. Assume $t>1$. If  $n-c> 2$ and  $a_1-b_{t}>0$; or $n-c=2$,  $a_1-b_{t}>0$ and $\ell \ge n+1$, then
 $Pic(X) \cong \mathbb{Z}^2\cong \langle H,Z \rangle$.
\end{theorem}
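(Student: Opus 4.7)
The plan is to realize $X$, up to isomorphism, as a complete intersection in an auxiliary smooth variety $Y$ with $\Pic(Y)\cong\ZZ^2$, and then invoke a Grothendieck--Lefschetz type theorem. Concretely, let $\pi\colon Y=\PP(G^*)\to\PP^n$ be the projective bundle attached to $G^*$. The map $\varphi\colon F\to G$ of Proposition~\ref{resol} yields a tautological section $\sigma$ of $\pi^*F^*\otimes\OO_Y(1)$, whose zero locus is the incidence variety
$$W:=Z(\sigma)=\{(p,[v])\in Y\mid v^T\cA(p)=0\}\subset Y.$$

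First I would verify that $\pi_1:=\pi|_W\colon W\to X\subset\PP^n$ is an isomorphism. The positivity hypothesis $a_1-b_t>0$ forces every entry $\cA_{ij}$ to have strictly positive degree, so $W$ has the expected codimension $t+c-1$ in $Y$, giving $\di W=n-c=\di X$. Over the open set $U\subset X$ where $\rk\cA=t-1$, the fiber of $\pi_1$ is a single point, so $\pi_1$ is birational. The rank-$(t-2)$ locus would have expected codimension $\ge 2c-1$ in $\PP^n$, and its nonemptiness would force singularities on $X$; since $X$ is smooth this locus is empty, $U=X$, and $\pi_1\colon W\xrightarrow{\sim}X$ is an isomorphism. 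Moreover $W$ is smooth in $Y$ and is a complete intersection cut out by $t+c-1$ sections of ample line bundles, the ampleness being again a consequence of $a_1-b_t>0$.

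Since $Y$ is a projective bundle over $\PP^n$, $\Pic(Y)=\ZZ\langle\pi^*H,\xi\rangle$ with $\xi=c_1(\OO_Y(1))$. In the regime $n-c>2$, $\di W\ge 3$, and iteratively applying the Grothendieck--Lefschetz hyperplane theorem to the ample complete intersection $W\hookrightarrow Y$ yields $\Pic(Y)\xrightarrow{\sim}\Pic(W)$. Transporting back via $\pi_1$, the class $\pi^*H$ restricts to the hyperplane class $H$ of $X$, while the divisor $\{v_t=0\}\cap W$ on $W$ represents $\xi|_W$ (up to a known multiple of $\pi^*H$) and maps under $\pi_1$ onto the locus in $X$ where the submatrix of $\cA$ obtained by deleting the last row has rank $\le t-2$, i.e.\ exactly the divisor $Z$. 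Hence $\Pic(X)=\ZZ\langle H,Z\rangle$.

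The remaining case $n-c=2$ is where I expect the real difficulty to lie: $\di W=2$, so Lefschetz only furnishes an injection $\Pic(Y)\hookrightarrow\Pic(W)$ and surjectivity must be argued separately. Here the hypothesis $\ell\ge n+1$ enters; via the identification of $\omega_X$ read off from the Buchsbaum--Rim complex $\cD_{c-1}(\varphi)$ of Proposition~\ref{resol}(iii), it provides a nonnegativity statement on $K_X$ strong enough to invoke a Noether--Lefschetz type theorem for the family of determinantal surfaces of the prescribed numerical type. Such a theorem (in the spirit of Ravindra--Srinivas) shows that for the very general member of the family $\Pic=\ZZ^2$ by a direct Hodge-theoretic computation, and the bound $\ell\ge n+1$ then propagates the conclusion to every smooth $X$ in the family. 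The hard technical point is exactly this last propagation, and in particular the verification that the divisor $Z$ supplies an actual generator of $\Pic(X)$ for every smooth $X$, not merely for the very general one.
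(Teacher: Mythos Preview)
The paper does not supply a self-contained argument for this theorem: its proof consists solely of citations to Ein for $\dim X\ge 3$ and to Faenzi--Fania and L\'opez for $\dim X=2$. Your proposal is therefore not really comparable to ``the paper's own proof''; rather, you are reconstructing the arguments behind those citations.

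For $n-c>2$ your outline is essentially Ein's method and is correct in substance: one passes to the projective bundle $Y=\PP(G^*)$, realizes $X\cong W$ as the zero locus of the tautological section of $\pi^*F^*\otimes\OO_Y(1)$, checks that smoothness of $X$ forces $V(I_{t-1}(\cA))\cap X=\emptyset$ so that $\pi|_W$ is an isomorphism, and then applies Grothendieck--Lefschetz to the ample complete intersection $W\subset Y$. Your identification of the restricted generators with $H$ and $Z$ is also the right one. One small point to tighten: the ampleness of each factor $\pi^*\OO(a_j)\otimes\OO_Y(1)$ needs the full hypothesis $a_1-b_t>0$ (so that, after the usual twist identifying $\PP(G^*)$ with $\PP(G^*(m))$, each $\OO(a_j+b_i)$ is positive), not merely positivity of the entries; this is easy but should be said.

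For $n-c=2$ you correctly diagnose the obstacle---Lefschetz gives only an injection on Picard groups---and correctly point toward Noether--Lefschetz. However, the step you flag as ``the hard technical point'', namely passing from the very general member to \emph{every} smooth $X$ under the hypothesis $\ell\ge n+1$, is not something that follows formally from a Ravindra--Srinivas style statement; it is precisely the content of the results of L\'opez and Faenzi--Fania that the paper invokes. Your sketch does not indicate a mechanism for this propagation (the Hodge-theoretic argument controls only the very general surface, and $\ell\ge n+1$ by itself does not force constancy of $\Pic$ across the family), so as written this part is a genuine gap rather than a different route to the same conclusion.
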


\begin{proof}
See \cite{Ein}; Corollary 2.4 for smooth standard determinantal varieties
$X\subset \PP^n$ of dimension $d\ge 3$ and \cite{FF};  Proposition 5.2 for the case $d=2$ (see also \cite{Lo}; Theorem II.4.2,
for smooth surfaces $X\subset \PP^4$).
\end{proof}

\subsection{ACM and Ulrich sheaves}

 We set up here some preliminary notions mainly concerning  the definitions and basic results on  ACM and Ulrich sheaves  needed later.

\begin{defn}\label{ACM} \rm
Let $X\subset \PP^n$ be a projective scheme and let $\shE$ be a coherent sheaf on $X$. $\shE$  is said to be \emph{Arithmetically Cohen Macaulay} (shortly, ACM) if it is locally Cohen-Macaulay (i.e., $\depth \shE_x=\di \odi{X,x}$ for every point $x\in X$) and has no intermediate cohomology, i.e. $$
\Hl^i(X,\shE (t))=0 \quad\quad \text{    for all $t$ and $i=1, \ldots , \di X-1.$}
$$
\end{defn}
Notice that when $X$ is a non-singular variety, any coherent ACM sheaf on $X$ is locally free. A seminal result  due to
Horrocks (cf. \cite{Hor})  asserts that, up to twist, there is only one indecomposable ACM
bundle on $\PP^n$: $\odi{\PP^n}$.
Ever since this result was stated, the study of the category of indecomposable arithmetically Cohen-Macaulay bundles  on a given projective scheme $X$ has raised a lot of interest since it is a natural way to understand the complexity of the underlying variety $X$ (for more information the reader can see \cite{CH}, \cite{CMP}, \cite{MR}, \cite{MP2012} and \cite{MP}). One of the goals of this paper is to study whether the normal sheaf
${\mathcal N}_X$ and the exterior power $\wedge ^q{\mathcal N}_X$ of the normal sheaf
(or suitable twists ${\mathcal N}_X\otimes \cL $ and  $\wedge ^q{\mathcal N}_X\otimes
\cL'$
by  invertible sheaves $\shL$ and $\shL'$)
of a standard determinantal scheme $X\subset \PP^n$ are ACM. ACM sheaves are closely
related to their algebraic counterpart, the maximal Cohen-Macaulay modules.

\begin{defn} \rm
A graded $R_X$-module $E$ is a \emph{maximal Cohen-Macaulay} module (MCM for short) if $\depth E=\di E=\di R_X$.
\end{defn}

In fact, we have:

\begin{proposition}\label{bijection}
Let $X\subseteq\PP^n$ be an ACM scheme. There exists a bijection between ACM sheaves $\shE$ on $X$ and MCM $R_X$-modules $E$ given by
the functors $E\rightarrow \widetilde{E}$ and $\shE\rightarrow \Hl^0_*(X,\shE)$.
\end{proposition}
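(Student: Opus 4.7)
The plan is to exploit the standard cohomological dictionary between finitely generated graded $R_X$-modules and coherent sheaves on $X=\Proj(R_X)$. For any such module $E$ I will use the four term exact sequence
$$0\to H^0_{\mathfrak{m}}(E)\to E\to \Hl^0_*(X,\widetilde{E})\to H^1_{\mathfrak{m}}(E)\to 0$$
together with the isomorphisms $\Hl^i_*(X,\widetilde{E})\cong H^{i+1}_{\mathfrak{m}}(E)$ for $i\ge 1$, and the characterization $\depth_{\mathfrak{m}} E=\inf\{i:H^i_{\mathfrak{m}}(E)\ne 0\}$. Since $X$ itself is ACM, $R_X$ is Cohen--Macaulay of Krull dimension $\di X+1$, so the MCM condition on $E$ becomes equivalent to the vanishing of $H^i_{\mathfrak{m}}(E)$ for every $i<\di X+1$.

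For the direction $E\mapsto \widetilde{E}$, assume $E$ is MCM. The vanishings of $H^0_{\mathfrak{m}}(E)$ and $H^1_{\mathfrak{m}}(E)$ give $E\cong \Hl^0_*(X,\widetilde{E})$, while the vanishings of $H^{i+1}_{\mathfrak{m}}(E)$ for $1\le i\le \di X-1$ yield $\Hl^i_*(X,\widetilde{E})=0$ in that range, so $\widetilde{E}$ has no intermediate cohomology. Local Cohen--Macaulayness of $\widetilde{E}$ at a point $x\in X$ corresponding to a homogeneous prime $\mathfrak{p}$ follows because the stalk $\widetilde{E}_x$ is the homogeneous localization $E_{(\mathfrak{p})}$, and the MCM property over the Cohen--Macaulay ring $R_X$ is preserved by such localizations, giving $\depth \widetilde{E}_x=\di\odi{X,x}$. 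Conversely, starting from an ACM sheaf $\shE$, set $E=\Hl^0_*(X,\shE)$, which is finitely generated as a graded $R_X$-module by Serre's theorem and automatically satisfies $\widetilde{E}\cong \shE$. Since $E$ equals $\Hl^0_*(X,\widetilde{E})$ by construction, the exact sequence forces $H^0_{\mathfrak{m}}(E)=H^1_{\mathfrak{m}}(E)=0$; the intermediate vanishing of $\shE$ then translates into $H^j_{\mathfrak{m}}(E)=0$ for $2\le j\le \di X$. Because the definition of local Cohen--Macaulayness requires $\depth \shE_x=\di\odi{X,x}$ at every $x$, the sheaf $\shE$ has full support, and hence $\di E=\di R_X=\di X+1$; combined with the cohomological vanishings, this yields $\depth E=\di X+1$, so $E$ is MCM.

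The two functors are then mutually inverse: $\widetilde{\Hl^0_*(X,\shE)}\cong \shE$ is Serre's theorem, and $\Hl^0_*(X,\widetilde{E})\cong E$ is exactly the content of the four term sequence once the low local cohomology of an MCM module vanishes. I do not expect a substantive obstacle, since the statement is a formal consequence of the local cohomology dictionary; the only step requiring care is matching the sheaf-theoretic condition $\depth \shE_x=\di\odi{X,x}$ with the module-theoretic MCM condition, and this is where the hypothesis that $X$ itself is ACM (so that $R_X$ is Cohen--Macaulay and depth behaves well under homogeneous localization) is used in an essential way.
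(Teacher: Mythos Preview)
Your argument is correct and is precisely the standard local-cohomology proof of this dictionary. The paper itself does not give an argument at all; it simply cites \cite{CH04}, Proposition 2.1, so there is nothing to compare beyond noting that you have supplied the details that the paper outsources.
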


\begin{proof}
See \cite{CH04}; Proposition 2.1.
\end{proof}

\begin{defn}\rm
Given a closed subscheme $X\subset \PP^n$, a coherent sheaf $\shE$ on  $X$
  is said to be \emph{initialized} if
$$
\Hl^0(X,\shE(-1))=0 \ \ \text{ but } \  \Hl^0(X,\shE)\neq 0.
$$
Notice that when $\shE$ is a locally Cohen-Macaulay sheaf, there always exists an integer $k$ such that $\shE_{init}:=\shE(k)$ is
initialized.
\end{defn}

Let us now introduce the notion of  Ulrich sheaf.
\begin{defn}
Given a projective scheme $X\subset \PP^n$ and a coherent sheaf $\shE$ on $X$, we say that $\shE$ is an \emph{Ulrich sheaf} if  $\shE$ is an ACM sheaf and $\h^0(\shE_{init})=\deg(X)\rk(\shE)$.
\end{defn}

We have the following result that justifies this definition:
\begin{theorem}
Let $X\subseteq\PP^n$ be an integral subscheme and let $\shE$  be an ACM sheaf on $X$. Then the minimal number of generators $m(\shE)$ of the $R_X$-module $\Hl^0_*(\shE)$ is bounded by
$$
m(\shE)\leq \deg (X)\rk(\shE).
$$
\end{theorem}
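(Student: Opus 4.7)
The plan is to reduce the statement to a standard multiplicity estimate for maximal Cohen--Macaulay modules. By Proposition~\ref{bijection}, the module $E:=\Hl^0_*(\shE)$ is an MCM $R_X$-module of (generic) rank $\rk(\shE)$, and since minimal generators of $\shE$ as a sheaf correspond to minimal generators of $E$ as an $R_X$-module, we have $m(\shE)=\mu_{R_X}(E)$, the minimal number of generators of $E$.

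Set $d=\di X$, so $\di R_X=d+1$. The first step is to pick a \emph{general} linear system of parameters $\ell_1,\dots ,\ell_{d+1}\in R_1$ for $R_X$. Because $X$ is integral, $R_X$ is a graded Cohen--Macaulay domain, and a general choice of $d+1$ linear forms forms a regular sequence on $R_X$; since $E$ is MCM, the same sequence is regular on $E$. Set $\overline{R}=R_X/(\ell_1,\dots ,\ell_{d+1})$ and $\overline{E}=E/(\ell_1,\dots ,\ell_{d+1})E$. By the generality of the $\ell_i$ and the definition of degree, $\overline{R}$ is a graded Artinian ring of length $\de(X)$.

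The next step is the multiplicity computation $\ell(\overline{E})=\rk(\shE)\cdot \de(X)$. I would do this by a prime filtration of $E$: there exists a chain $0=E_0\subset E_1\subset \cdots \subset E_N=E$ of graded submodules with $E_i/E_{i-1}\cong (R_X/\mathfrak{p}_i)(a_i)$ for homogeneous primes $\mathfrak{p}_i$. Since $E$ is MCM and $R_X$ is a CM domain, only the zero prime $\mathfrak{p}_i=(0)$ can appear; otherwise a quotient would force $\depth E<\di R_X$. Counting generic rank, the number of such occurrences is exactly $\rk(E)=\rk(\shE)$. Applying $-\otimes_{R_X}\overline{R}$ and using regularity of the $\ell_i$ (so that tensoring preserves lengths additively on this filtration), one gets
$$\ell(\overline{E})=\rk(\shE)\cdot \ell(\overline{R})=\rk(\shE)\cdot \de(X).$$

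Finally, $\overline{E}$ is a finitely generated module over the Artinian local graded ring $\overline{R}$, so by Nakayama
$$\mu_{R_X}(E)=\mu_{\overline{R}}(\overline{E})=\di_K\overline{E}/\mathfrak{n}\overline{E}\le \ell(\overline{E})=\rk(\shE)\cdot \de(X),$$
where $\mathfrak{n}$ is the irrelevant ideal of $\overline{R}$, giving the desired bound.

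The main technical obstacle is justifying the multiplicity/rank identity $\ell(\overline{E})=\rk(\shE)\cdot \de(X)$; the cleanest path is the prime filtration argument above, which crucially uses both integrality of $X$ (forcing the unique minimal prime to be $(0)$) and the MCM condition on $E$ (forcing every filtration quotient to be supported at this minimal prime). Once that identity is in hand, the rest is Nakayama's lemma.
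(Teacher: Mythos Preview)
The paper does not actually prove this result; it is stated as background with an implicit reference to Ulrich \cite{Ulr}. Your overall strategy---pass to the MCM module $E=\Hl^0_*(\shE)$, cut by a linear system of parameters $\mathfrak{q}=(\ell_1,\dots,\ell_{d+1})$, and bound $\mu(E)\le\ell(\overline{E})$---is the standard one and is sound in outline.

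There are, however, two genuine errors in the execution. First, integrality of $X$ does \emph{not} make $R_X$ Cohen--Macaulay; it only makes it a domain. This is harmless for the regularity of the $\ell_i$ on $E$ (any system of parameters of $R_X$ is automatically an $E$-regular sequence because $E$ is MCM, whether or not $R_X$ is), but it does mean you cannot simply assert $\ell(\overline{R})=\de(X)$.

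Second, and this is the real gap, the prime-filtration step is incorrect. It is \emph{not} true that in a prime filtration of an MCM module over a domain every successive quotient must be $R_X(a_i)$: already for $E=R_X=K[x,y]$ the filtration $0\subset xR_X\subset R_X$ exhibits $R_X/(x)$ as a quotient. MCM-ness forces only the \emph{associated} primes of $E$ to be minimal, not every prime occurring in an arbitrary filtration; and with non-minimal primes present, your ``tensoring preserves lengths additively'' claim fails too. The clean fix is to bypass filtrations entirely and use multiplicity theory directly: since the $\ell_i$ form an $E$-regular sequence one has $\ell(\overline{E})=e(\mathfrak{q},E)$, and the associativity formula over the domain $R_X$ gives
\[
e(\mathfrak{q},E)=\rk(E)\cdot e(\mathfrak{q},R_X)=\rk(E)\cdot\de(X),
\]
the last equality because a general linear system of parameters generates a minimal reduction of $\mathfrak{m}$. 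With this replacement your final Nakayama step goes through unchanged.
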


Therefore, since it is obvious that for an initialized sheaf $\shE$, $\h^0(\shE)\leq m(\shE)$, the minimal number of generators of Ulrich sheaves is
as large as possible. Modules attaining this upper bound were studied by Ulrich in \cite{Ulr}. A detailed account is provided in \cite{ESW}. In particular we have:

\begin{theorem}\label{equivconditionsulrich}
Let $X\subseteq\PP^N$ be an $n$-dimensional ACM variety and $\shE$  be an initialized ACM coherent sheaf on $X$. The following conditions are equivalent:
\begin{enumerate}
\item[(i)] $\shE$ is Ulrich.
\item[(ii)] $\shE$ admits a linear $\odi{\PP^N}$-resolution of the form:
$$
0 \arr \odi{\PP^N}(-N+n)^{a_{N-n}}\arr \dots \arr \odi{\PP^N}(-1)^{a_1}\arr\odi{\PP^N}^{a_0}\arr\shE\arr 0.
$$

\item[(ii)] $\Hl^i(\shE(-i))=0$ for $i>0$ and $\Hl^i(\shE(-i-1))=0$ for $i<n$.
\item[(iv)] For some (resp. all) finite linear projections $\pi:X\arr\PP^n$, the sheaf $\pi_*\shE$ is the trivial sheaf $\odi{\PP^n}^t$ for some $t$.
\end{enumerate}
In particular, initialized Ulrich sheaves are $0$-regular and therefore they are globally generated.
\end{theorem}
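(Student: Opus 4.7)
The plan is to prove the equivalence via the cycle (i) $\Rightarrow$ (iv) $\Rightarrow$ (iii) $\Rightarrow$ (ii) $\Rightarrow$ (i), with the final regularity assertion falling out as a corollary of (iii). The geometric thread is a general finite linear projection $\pi:X\arr \PP^n$ coming from a Noether normalization of $R_X$; since $X$ is Cohen-Macaulay and $\PP^n$ is regular, miracle flatness makes $\pi$ finite and flat, so $R^j\pi_*\shE = 0$ for $j>0$ and $\pi_*$ preserves local Cohen-Macaulayness.

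For (i) $\Rightarrow$ (iv), $\pi_*\shE$ is coherent and locally Cohen-Macaulay on $\PP^n$, hence locally free; by Horrocks' theorem $\pi_*\shE \cong \oplus_i \odi{\PP^n}(-a_i)$. The rank equals $\deg(X)\rk(\shE)$, the Ulrich hypothesis gives $\h^0(\shE)=\deg(X)\rk(\shE)$, and the initialized hypothesis $\h^0(\shE(-1))=0$ when compared with the splitting forces every $a_i=0$. For (iv) $\Rightarrow$ (iii), finiteness of $\pi$ gives $\Hl^i(X,\shE(k))=\Hl^i(\PP^n,\pi_*\shE(k))=\Hl^i(\PP^n,\odi{\PP^n}(k))^t$, and the vanishings in (iii) reduce to the standard cohomology of $\PP^n$.

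For (iii) $\Rightarrow$ (ii), the first block $\Hl^i(\shE(-i))=0$ for $i>0$ is Castelnuovo-Mumford $0$-regularity, so $\shE$ is globally generated and there is a surjection $\odi{\PP^N}^{a_0}\twoheadrightarrow \shE$ with $a_0=\h^0(\shE)$. Letting $\shK_1$ be the kernel and using the long exact sequence, one checks $\shK_1(1)$ is again $0$-regular; iterating builds the linear strand. The second vanishing $\Hl^i(\shE(-i-1))=0$ for $i<n$ is precisely what forces the kernel at step $N-n$ to be free, terminating the resolution with only linear maps. For (ii) $\Rightarrow$ (i), the linear resolution immediately yields the intermediate cohomology vanishings on $X$ via the hypercohomology spectral sequence, so $\shE$ is ACM; then computing $\h^0(\shE)$ from the resolution and comparing with the Hilbert polynomial $\chi(\shE(k))=\deg(X)\rk(\shE)\binom{k+n}{n}+\ldots$ on $X$ produces the equality $\h^0(\shE)=\deg(X)\rk(\shE)$, confirming the Ulrich property. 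The final assertion of the theorem is then tautological: (iii) is the definition of $0$-regularity, and $0$-regular sheaves on $\PP^N$ are globally generated by Mumford's theorem.

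The main obstacle I anticipate is the termination-of-resolution step in (iii) $\Rightarrow$ (ii); one must justify precisely that the linear strand has length exactly $N-n$ and not longer, which requires using the second block of vanishings in a Serre-duality-style argument on $\PP^N$. The cleanest workaround is probably to short-circuit by proving (iii) $\Rightarrow$ (iv) first: the vanishings give enough information to identify $\pi_*\shE$ with $\odi{\PP^n}^t$ directly, after which one obtains (ii) by pulling back the length-$(N-n)$ Koszul resolution of $\odi{\PP^n}$ on $\PP^N$, tensored with $\pi_*\shE$, thereby circumventing the bookkeeping altogether and closing the loop cleanly.
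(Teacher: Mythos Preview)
The paper does not give its own proof of this theorem: it simply cites \cite{ESW}, Proposition 2.1. So there is no in-paper argument to compare against beyond the reference.

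Your sketch is essentially the standard proof and, in particular, is the argument one finds in Eisenbud--Schreyer--Weyman. The cycle through (iv) via a Noether-normalization projection $\pi:X\to\PP^n$, miracle flatness, and Horrocks' splitting criterion is exactly the right engine, and your ``workaround'' at the end---proving (iii)$\Rightarrow$(iv) directly rather than fighting with the termination of the linear strand---is not a workaround but the cleanest way to close the loop: once $\pi_*\shE=\odi{\PP^n}^t$, the linear resolution (ii) is obtained from the Koszul resolution of the regular sequence cutting out $\PP^n\subset\PP^N$, tensored by $\odi{\PP^n}^t$ (equivalently, Beilinson on $\PP^n$ pulled back). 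Your direct argument for (ii)$\Rightarrow$(i) via the Hilbert polynomial is a bit hand-wavy as written; the painless route is (ii)$\Rightarrow$(iii) (immediate from the cohomology of line bundles on $\PP^N$), then (iii)$\Rightarrow$(iv)$\Rightarrow$(i), which you already have. With that adjustment, the proposal is correct.
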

\begin{proof} See  \cite{ESW} Proposition 2.1.
\end{proof}

In the next section, we will prove that under some  weak restrictions the normal sheaf ${\mathcal N}_X$ of a linear standard determinantal scheme $X$ twisted by a suitable invertible sheaf   $\cL$ on
$X\setminus Sing(X)$ is Ulrich (cf. Theorem \ref{normalresolution}); and as an immediate consequence we will deduce the $\mu$-semistability of the normal sheaf of a linear standard determinantal scheme (cf. Theorem \ref{semistablenormal}).


\section{The Cohen-Macaulayness of the normal sheaf of a determinantal variety}

The main goal of this  section is  to answer Problem \ref{pblm}(1). More precisely, we prove that, under some mild assumptions, given a standard determinantal scheme $X\subset \PP^n$ of codimension $c$ there always exists an invertible sheaf $\shL$ on $X\setminus Sing(X)$ such
${\mathcal N}_X\otimes \shL$ is ACM (cf. Theorem \ref{normalresolution}) and as a by-product we obtain an  $\odi{\PP^n}$-resolution of ${\mathcal
N}_X\otimes \shL$  which turns out to be pure and linear in the case of linear standard determinantal schemes (cf. Theorem \ref{normalresolution} and Corollary \ref{res}).
 At the end of this section, we conjecture an analogous
result for the exterior power of the normal sheaf (cf. Conjecture \ref{conjetura2}).

\vskip 2mm In this section $X\subset \PP^n$ will be a standard determinantal
scheme of codimension $c$, $\cA $ the $t \times (t+c-1)$ homogeneous matrix
associated to $X$, $I=I_t(\cA)$, $A=R/I$, $N_A:=\Hom_A(I/I^2,A)$, ${\mathcal
  N}_X:=\widetilde{N_A}$, $$\varphi:F:=\bigoplus _{j=1}^{t+c-1}
R(-a_j)\longrightarrow G:=\bigoplus _{i=1}^tR(-b_{i})$$ the morphism of free
$R$-modules associated to $\cA$ and $M:=\coker(\varphi )$. We will assume
$t>1$, since the case $t=1$ corresponds to a codimension $c$ complete
intersection $X\subset \PP^n$ and its normal sheaf is well understood
(${\mathcal N}_X\cong \oplus _{i=1}^c \odi{X}(d_i)$). If $t\ge 2$,
$\widetilde{M}$ is a locally free $\odi{X}$-module of rank 1 over
$T:=X\setminus V(J)$ where $J:=I_{t-1}(\cA)$ and $T\hookrightarrow \PP^n$ is a
local complete intersection. Recall also that if $a_j > b_i$ for any $i,j$,
then $V(J)=Sing(X)$, $\codim _X(Sing(X))=c+2$ or $Sing(X)=\emptyset $ and $\codim _{\PP^n}X=c$ for a
general choice of $\varphi \in \Hom (F,G)$.

Recall that the normal sheaf ${\mathcal N}_X$ of a standard determinantal
scheme is ACM if and only if $\Hl ^{i}_*({\mathcal N}_X)=0$ for $0<i<\dim X$.
In particular, the normal sheaf ${\mathcal N}_X$ of a standard determinantal
curve $X\subset \PP^n$ ($c=n-1$) is always ACM as well as the normal sheaf
${\mathcal N}_X$ of a standard determinantal scheme $X\subset \PP^n$ of
codimension $1\le c\le 2$. Indeed, if $c=1$, then ${\mathcal N}_X\cong
\odi{X}(\delta )$ where $\delta:=\deg(X)$ and if $c=2$, then there is an exact
sequence (cf. \cite{KP}, (26))
\begin{equation} \label{Normalcodim2}
 0 \rightarrow \widetilde{F} \otimes_R \widetilde{G}^* \rightarrow  (( \widetilde{F}^* \otimes_R \widetilde{F})
 \oplus (\widetilde{G}^* \otimes_R \widetilde{G}))/R \rightarrow
 \widetilde{G} \otimes_R \widetilde{F}^* \rightarrow {\mathcal N}_X \rightarrow 0.
\end{equation}
Unfortunately, it is no longer true for higher codimension and we only have
that under some mild conditions $\Hl^{i} _*({\mathcal N}_X)=0$ for $1\le i \le
n-c-2$ (see \cite{KP2}; Lemma 35 for $c=3$, \cite{KM05}; Corollary 5.5 for
$3\le c\le 4$ and \cite{K2011}; Theorem 5.11 for the general case). In this
section we are going to prove that under some weak restrictions ${\mathcal
  N}_X\otimes \widetilde{S_{c-2}M}$ is ACM, i.e.
$$\Hl^{i} _*(X,{\mathcal N}_X\otimes \widetilde{S_{c-2}M})=0 \text{ for } 1\le i \le n-c-1.$$

To prove it, we will use the following technical lemma which can be seen as a
generalization of the mapping cone process and we include a proof for the sake
of completeness. In this lemma the differentials of a complex, say
$Q_{\bullet}$, are denoted by $d_Q^{i}:Q_i \to Q_{i-1}$.

 \begin{lemma}\label{technicallemma}
   Let $Q_{\bullet} \stackrel{\sigma _{\bullet}}{\longrightarrow} P _{\bullet}
   \stackrel{\tau _{\bullet}}{ \longrightarrow} F _{\bullet} $ be morphisms of
   complexes satisfying $Q_j=P_j=F_j=0$ for $j < 0$ and assume that all three
   complexes are acyclic (exact for $j \ne 0$) and that the sequence
\begin{equation*}
  0 \longrightarrow \coker d_Q^{1}  \longrightarrow \coker d_P^{1} \stackrel {\alpha}
  { \longrightarrow}  \coker d_F^{1}
\end{equation*}
is exact. Moreover assume that there exists a morphism $\ell _{\bullet}: Q_{\bullet}\longrightarrow  F_{\bullet}[1]$ such that
for any integer $i$:
\begin{equation} \label{ellcomp} d_F^{i+1}\ell _i+\ell _{i-1}d_Q^{i}=\tau
  _i\sigma_i.
\end{equation}
Then, the complex  $Q_{\bullet}\oplus P_{\bullet}[1]\oplus F_{\bullet}[2]$ given by
$$Q_{i}\oplus P_{i+1}\oplus F_{i+2} \stackrel {d^{i}_{Q,P,F}}
{ \longrightarrow} Q_{i-1}\oplus P_{i}\oplus F_{i+1}$$ where
 $$d^{i}_{Q,P,F}:=\begin{pmatrix}d^{i}_Q & 0 & 0 \\ \sigma _{i} & -d_P^{i+1} & 0 \\
 \ell _{i} & -\tau_{i+1} & d^{i+2}_F \end{pmatrix} $$ is acyclic (exact for $i \ne -2$) and $ \coker  {d^{-1}_{Q,P,F}} = \coker \alpha$.
 \end{lemma}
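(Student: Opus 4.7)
The plan is to split the argument into three parts: verify that the proposed differential squares to zero, identify the cokernel at the bottom of the complex directly from the definitions, and then establish exactness at all other degrees by a three-stage diagram chase.

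To see that the sequence is a complex, I would compute $d^{i-1}_{Q,P,F}\circ d^i_{Q,P,F}$ as a $3\times 3$ matrix product. The diagonal entries vanish because $d_Q$, $d_P$, $d_F$ are differentials; the $(2,1)$ entry $\sigma_{i-1}d_Q^i-d_P^i\sigma_i$ vanishes since $\sigma_\bullet$ is a chain map, and the $(3,2)$ entry $\tau_id_P^{i+1}-d_F^{i+1}\tau_{i+1}$ vanishes for the analogous reason; the $(3,1)$ entry unwinds to $d_F^{i+1}\ell_i+\ell_{i-1}d_Q^i-\tau_i\sigma_i$, which vanishes precisely by the hypothesis (\ref{ellcomp}). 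To identify $\coker d^{-1}_{Q,P,F}$, observe that $d^{-2}_{Q,P,F}$ lands in the zero module and that the image of $d^{-1}_{Q,P,F}$ in the only surviving summand $F_0$ is $\tau_0(P_0)+\im d_F^1$; quotienting first by $\im d_F^1$ and then by the image of $\tau_0$ in $\coker d_F^1$ gives $\coker\alpha$ by definition.

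For exactness at degrees $i\geq 0$ I would take a cycle $(q,p,f)\in Q_i\oplus P_{i+1}\oplus F_{i+2}$ and build a preimage in three stages. Stage 1: find $q'\in Q_{i+1}$ with $d_Q^{i+1}q'=q$. Stage 2: using the chain map property of $\sigma$ together with $\sigma_iq=d_P^{i+1}p$ from $(q,p,f)\in\ker d^i$, check that $\sigma_{i+1}q'-p$ is a cycle in $P_{i+1}$, and then take $p'\in P_{i+2}$ with $d_P^{i+2}p'=\sigma_{i+1}q'-p$. Stage 3: apply (\ref{ellcomp}) with index shifted by one, the chain map property of $\tau$, and the third cycle relation $\ell_iq-\tau_{i+1}p+d_F^{i+2}f=0$ to verify that $f-\ell_{i+1}q'+\tau_{i+2}p'$ lies in $\ker d_F^{i+2}$, and then take $f'\in F_{i+3}$ with $d_F^{i+3}f'$ equal to this element. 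When $i\geq 1$ each of the three stages is immediate from the acyclicity of $Q$, $P$, and $F$ respectively.

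The genuinely delicate points, which are also where the extra hypothesis on $\alpha$ is needed, are the two bottom degrees $i=0$ and $i=-1$. At $i=0$ the cycle $q\in Q_0$ need not a priori lie in $\im d_Q^1$; however the relation $\sigma_0q\in\im d_P^1$ forces the class of $q$ in $\coker d_Q^1$ to be killed by the induced map $\bar\sigma\colon\coker d_Q^1\to\coker d_P^1$, so the assumed injectivity of $\bar\sigma$ yields $q\in\im d_Q^1$ and Stage 1 goes through. At $i=-1$ one has no $Q$-component and is handed $(p,f)\in P_0\oplus F_1$ with $\tau_0p=d_F^1f$, i.e.\ $\alpha(\bar p)=0$; the exactness hypothesis $\ker\alpha=\im\bar\sigma$ then produces $q'\in Q_0$ with $p-\sigma_0q'\in\im d_P^1$, after which Stages 2 and 3 proceed verbatim as before. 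Matching the exactness on cokernels to the liftings at the bottom of the total complex is the only genuinely new input beyond the standard mapping-cone bookkeeping, and once it is in place one concludes that the complex is acyclic for $i\neq -2$ with $\coker d^{-1}_{Q,P,F}=\coker\alpha$, as required.
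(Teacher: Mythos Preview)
Your proposal is correct and follows essentially the same route as the paper's own proof: after checking that the differential squares to zero, you perform the same three-stage lifting (first in $Q$, then in $P$, then in $F$), invoking the injectivity of $\coker d_Q^1\to\coker d_P^1$ at the degree where $q\in Q_0$ and the exactness $\ker\alpha=\im\bar\sigma$ at the degree where the cycle lives in $P_0\oplus F_1$. Apart from a harmless shift in the running index (your degree $i$ is the paper's $i-1$), the arguments coincide step by step, including the identification of $\coker d^{-1}_{Q,P,F}$ with $\coker\alpha$.
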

 \begin{proof} It is straightforward to show that $ {d^{i-1}_{Q,P,F}} \circ
   {d^{i}_{Q,P,F}}=0$ by using $\eqref{ellcomp}$ and that the differentials of
   the complexes commute with $\sigma _{\bullet}$ and $\tau_{\bullet}$.
   To see the acyclicity of  $Q_{\bullet}\oplus P_{\bullet}[1]\oplus F_{\bullet}[2]$, let  $$\begin{pmatrix}d^{i-1}_Q & 0 & 0 \\ \sigma _{i-1} & -d_P^{i} & 0 \\
     \ell _{i-1} & -\tau_{i} & d^{i+1}_F \end{pmatrix}\begin{pmatrix} q \\ p \\
     f \end{pmatrix} =\begin{pmatrix}d^{i-1}_Q(q) \\ \sigma _{i-1}(q) -d_P^{i}(p) \\
     \ell _{i-1}(q) -\tau_{i}(p) + d^{i+1}_F(f) \end{pmatrix}=\begin{pmatrix} 0 \\ 0 \\
     0 \end{pmatrix} \ .$$ We must show the existence of $(q',p',f') \in
   Q_{i}\oplus P_{i+1}\oplus F_{i+2}$ whose transpose maps to $(q,p,f)^{\rm
     tr}$ via $d^{i}_{Q,P,F}$. Since $Q_{\bullet}$ is exact for $i \ne 0$
   there exists a $q' \in Q_{i}$ such that $d^{i}_{Q}(q')=q$ provided $i > 1$.
   For $i =1$ we use $ \sigma _{0}(q) = d_P^{1}(p) $ and the injectivity of $
   \coker d_Q^{1} \rightarrow \coker d_P^{1}$ to see that $q $ maps to zero in
   $ \coker d_Q^{1}$. Hence there exists $q' \in Q_{1}$ such that
   $d^{1}_{Q}(q')=q$. We will treat the case $i=0$ shortly.

   Suppose $i > 0$. Since we have $ \sigma _{i-1}(q) -d_P^{i}(p)=0$,
   $q=d^{i}_{Q}(q')$ and $\sigma _ {i-1}d^{i}_{Q}=d^{i}_{P}\sigma _{i}$ we get
   $ d^{i}_{P}(\sigma_{i }(q') - p)=0$. By the exactness of $P_{\bullet}$,
   there exists an element $p' \in P_{i+1}$ such that
   $d^{i+1}_{P}(p')=\sigma_{i }(q') - p$. Then we insert $p= \sigma_{i }(q')
   -d^{i+1}_{P}(p')$ and $q=d^{i}_{Q}(q')$ into $ \ell _{i-1}(q) -\tau_{i}(p)
   + d^{i+1}_F(f)=0$ and we use \eqref{ellcomp} and that $d^{i+1}_{P}$ and
   $d^{i+1}_{F}$ commute with $\tau_{\bullet}$ to conclude that $d^{i+1}_{F}(-
   \ell _{i}(q') +\tau_{i+1}(p') + f)=0$. Hence there exists an element $f'
   \in F_{i+2}$ such that $ f=\ell _{i}(q') -\tau_{i+1}(p') +
   d^{i+2}_{F}(f')$. The expressions for $q, p$ and $f$ above prove precisely
   what we needed to show if $i > 0$.

   Now suppose $i = 0$. Since $d^{-1}_{Q}=d^{0}_{Q}=d^{0}_{P}=
   \sigma_{-1}=\ell_{-1}=0$ and $q \in Q_{-1} = 0$ we have an element $(p,f)
   \in P_{0}\oplus F_{1}$ that maps to $-\tau_0(p)+d^1_F(f) = 0$ in $F_0$.
   Since $\tau_0(p)$ maps to zero in $\coker d_F^{1}$, it follows that $p$ is
   sent to an element in $ \coker d_P^{1}$ that is contained in $\coker
   d_Q^{1}$ by the exactness assumption on the sequence of cokernels of
   Lemma~\ref{technicallemma}. Hence there exists an element $q' \in Q_0$ such
   that $\sigma_0(q')-p$ maps to zero in $ \coker d_P^{1}$, whence there is a
   $p' \in P_1$ such that $d^{1}_{P}(p')=\sigma_0(q')-p$. Since we have
   $d^1_F(-l_0(q')+\tau_1(p')+f)=0$ there exists $f' \in F_2$ such that
   $d^2_F(f')=-l_0(q')+\tau_1(p')+f$, and we get the expressions for $p$ and $f$
   which we wanted to show.

   Finally using that $(p,f) \in P_{0}\oplus F_{1}$ maps to
   $-\tau_0(p)+d^1_F(f) $ in $F_0$ we easily get $ \coker {d^{-1}_{Q,P,F}} =
   \coker \alpha$ and we are done.
\end{proof}

\begin{theorem} \label{normalACM} We keep the notation introduced above and, in addition,  we assume $\depth _JA\ge 2$. Then, we have  $$\Ext^1_R(M,S_{c-1}M)$$ is a maximal Cohen-Macaulay
$A$-module of rank $c$. Moreover,  if $a_j=1$ for all $j$ and $b_i=0$ for all $i$, then $\widetilde{\Ext^1_R(M,S_{c-1}M)}$ is an Ulrich sheaf of rank $c$.
\end{theorem}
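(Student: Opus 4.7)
The plan is to produce an explicit free $R$-resolution of $\Ext^1_R(M, S_{c-1}M)$ of length $c$. Once this is achieved, the MCM conclusion is immediate from the Auslander-Buchsbaum formula (since $\depth R = n+1$ and $\dim A = n+1-c$); the rank-$c$ claim reduces to a local calculation on the smooth locus $T = X \setminus V(J)$; and in the linear case the resolution will turn out to be pure and linear, from which the Ulrich property follows via Theorem \ref{equivconditionsulrich}.

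First I would recast the target $\Ext$ module into a more tractable form. Applying $\Hom_R(-, S_{c-1}M)$ to the Buchsbaum-Rim resolution $\mathcal{D}_1(\varphi)$ of $M$ from Proposition \ref{resol} exhibits $\Ext^1_R(M, S_{c-1}M)$ as a cohomology module of an explicit complex of free $R$-modules. On $T$, $\widetilde M$ is invertible and $X\hookrightarrow \PP^n$ is a local complete intersection of codimension $c$, so the Koszul formula for local $\Ext$ gives
\[
{\mathcal E}xt^1_{\odi{\PP^n}}(\widetilde M, \widetilde{S_{c-1}M})\big|_T \;\cong\; {\mathcal N}_X \otimes \widetilde{S_{c-2}M}\big|_T \;\cong\; {\mathcal H}om_{\odi X}({\mathcal I}_X/{\mathcal I}_X^2, \widetilde{S_{c-2}M})\big|_T,
\]
a sheaf of rank $c$ on $T$. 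The depth hypothesis $\depth_J A \geq 2$, combined with the comparison isomorphism \eqref{NM}, is exactly what is needed to promote this sheaf-level identification to an honest $A$-module isomorphism $\Ext^1_R(M, S_{c-1}M) \cong \Hom_R(I/I^2, S_{c-2}M)$, in the spirit of \cite{K2011}.

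The main technical step is then to construct an explicit length-$c$ free resolution of this Hom module. The three input complexes for Lemma \ref{technicallemma} should be assembled from the Eagon-Northcott complex $\mathcal{D}_0(\varphi)$ (resolving $A$, hence giving $I$ after splicing) and $\mathcal{D}_{c-2}(\varphi)$ (resolving $S_{c-2}M$), both of length $c$, together with a comparison complex. The morphisms $\sigma_\bullet$ and $\tau_\bullet$ should be built from the multiplication map $I \otimes S_{c-2}M \to S_{c-1}M$ and the natural quotient $I \twoheadrightarrow I/I^2$, while the homotopy $\ell_\bullet$ is supplied by the splicing maps $\mu_i$ introduced in Section~2. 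The lemma then yields an acyclic free $R$-complex of length exactly $c$ whose cokernel is $\Hom_R(I/I^2, S_{c-2}M)$; Auslander-Buchsbaum gives $\depth_R \Ext^1_R(M, S_{c-1}M) = n+1-c = \dim A$, so the module is MCM over $A$, and the rank is $c$ by the smooth-locus computation above.

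In the linear case $a_j=1$, $b_i=0$, the differentials of $\mathcal{D}_0(\varphi)$ and $\mathcal{D}_{c-2}(\varphi)$ and all splicing maps $\mu_i$ are linear, so the resolution produced by the technical lemma is pure and linear; combined with MCM and rank $c$, Theorem \ref{equivconditionsulrich}(ii) yields the Ulrich conclusion. The principal obstacle in this plan is the explicit pinning-down of $\sigma, \tau, \ell$ satisfying the compatibility \eqref{ellcomp}, and the careful bookkeeping needed to track cancellations of repeated summands in the mapping cone so that linearity is preserved in the final free resolution.
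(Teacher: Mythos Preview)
Your overall strategy—feed three free resolutions into Lemma~\ref{technicallemma} to produce a length-$c$ resolution and read off the MCM and Ulrich properties—matches the paper's. But the specific input you propose does not work, and the gap is exactly where the real content of the proof lies.

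The paper does \emph{not} pass through the identification $\Ext^1_R(M,S_{c-1}M)\cong \Hom_A(I/I^2,S_{c-2}M)$ (that isomorphism is proved afterwards, in Proposition~\ref{ext-norm}). Instead it first establishes, by applying $\Hom_R(-,S_{c-1}M)$ to the Buchsbaum--Rim complex and using $\depth_JA\ge 2$ to get $\Hom_R(M,S_{c-1}M)\cong S_{c-2}M$, the key four-term exact sequence
\[
0\to S_{c-2}M\to G^*\otimes S_{c-1}M\to F^*\otimes S_{c-1}M\to \Ext^1_R(M,S_{c-1}M)\to 0.
\]
The three complexes fed into Lemma~\ref{technicallemma} are then the resolutions $Q_\bullet=\mathcal D_{c-2}(\varphi)$, $P_\bullet=G^*\otimes\mathcal D_{c-1}(\varphi)$, and $F_\bullet=F^*\otimes\mathcal D_{c-1}(\varphi)$ of the first three terms; $\tau_\bullet$ is simply $\varphi^*\otimes 1$, while $\sigma_\bullet$ and $\ell_\bullet$ are explicit trace-type and wedge-type maps that have to be written down and checked by hand (this is most of the work). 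Your proposed inputs ($\mathcal D_0$ giving $I$, a ``comparison complex'', a ``multiplication map $I\otimes S_{c-2}M\to S_{c-1}M$'') do not fit the shape of Lemma~\ref{technicallemma}: the lemma needs the cokernels of the three complexes to sit in a left-exact sequence, and no such sequence arises from the data you describe. In particular there is no natural nonzero map $I\otimes S_{c-2}M\to S_{c-1}M$, since $I$ annihilates every $S_iM$.

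A second gap: Lemma~\ref{technicallemma} produces a complex of length $c+2$, not $c$. The drop to length $c$ only happens because, with the paper's specific choice of $\sigma_\bullet$ and $\ell_\bullet$, one has $\sigma_c=\mathrm{Id}$ and $\ell_c=\mathrm{Id}$, so the top two terms split off. Your proposal asserts ``length exactly $c$'' without this step. Finally, for the Ulrich claim the paper does \emph{not} argue that the resulting resolution is linear and then invoke Theorem~\ref{equivconditionsulrich}; rather it computes $\dim_K({}_{-1}\Ext^1(M,S_{c-1}M))$ directly from the four-term sequence above and matches it to $\rk\cdot\deg(X)$. The resolution from Lemma~\ref{technicallemma} still has non-linear repeated summands that are only known to cancel \emph{after} Ulrichness is established (see the remark following Corollary~\ref{res}), so your proposed direction of implication is circular.
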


\begin{proof}
Our  primary goal is to show that $\Ext^1_R(M,S_{c-1}M)$ is a MCM
$A$-module by using the exact sequence
\begin{equation}\label{mainseq}
 0 \longrightarrow S_{c-2}M   \longrightarrow G^*\otimes S_{c-1}M  \longrightarrow F^*\otimes S_{c-1}M \longrightarrow \Ext^1
_R(M, S_{c-1}M)  \longrightarrow 0.
\end{equation}
 and Lemma \ref{technicallemma} to exhibit a minimal free resolution of $\Ext^1_R(M,S_{c-1}M)$ having length $c$.
So let us start  proving the existence of the exact sequence (\ref{mainseq}).
Indeed, we look at the Buchsbaum-Rim complex (see Proposition \ref{resol}(ii))
 \begin{equation}\label{BR} \cdots  \longrightarrow \wedge ^{t+1}F\otimes \wedge ^tG^*\otimes S_0G^*  \stackrel{\epsilon _1^*}{ \longrightarrow} F \stackrel{\varphi }{ \longrightarrow}G  \longrightarrow M \longrightarrow 0.\end{equation}
Since $I_t(\varphi)\cdot M=0$ and $\im(\epsilon _1^*) \subseteq I_t(\varphi)\cdot F$, we get
that the induced map $\Hom _R(\epsilon _1^*, S_{c-1}M) = 0$.
Therefore, the exact sequence (\ref{mainseq}) comes from applying the functor  $\Hom_ R(-,S_{c-1}M)$ to the exact sequence  (\ref{BR}),
 because under the assumption $\depth _J A\ge 2$, we have  $\depth _J  S_{i}M \ge 2$ for any $1 \le i \le c$ and hence
\begin{equation}\label{Hom(M,SiM)}
\begin{array}{cll}
\Hom_R(M,S_{i}M) & \cong & \Hl ^0_*(X\setminus V(J),{\mathcal H}om_{\odi{X}}(\widetilde{M},\widetilde{M}^{\otimes (i)}))\\ & \cong &
 \Hl ^0_*(X\setminus V(J),\widetilde{M}^{\otimes (i-1)}) \\ & \cong & S_{i-1}M.\end{array}
\end{equation}

From the exact sequence (\ref{mainseq}) we deduce that $\Ext^1
_R(M, S_{c-1}M)$ has rank $c$; let us prove that it is a maximal Cohen-Macaulay $A$-module. The idea will be
 be to apply Lemma \ref{technicallemma} to the following diagram  which we will define as an expansion of (\ref{mainseq}) (we set $\wedge ^{i}:=\wedge^{i}F$):

 $$
 {\scriptsize
 \begin{array}{cccccccccccccc}& & 0 &  & 0 & & 0 & &&&&& \\& &\downarrow &  & \downarrow & &  \downarrow & &&&&& \\
  &  &  \wedge^{t+c-1}\otimes S_{1}G^*\otimes \wedge^t G^*  & \stackrel{\sigma _c }{ \dashrightarrow } &  G^*\otimes \wedge^{t+c-1}\otimes S_{0}G^*\otimes \wedge ^{t}G^* & \stackrel{\varphi^*\otimes 1 }{ \longrightarrow } & F^*\otimes \wedge^{t+c-1}\otimes S_{0}G^*\otimes \wedge ^{t} G^* &  &  &  & & &
   \\
 & &  \downarrow &  & \downarrow & &  \downarrow & &&&&& \\
  &   &  \wedge^{t+c-2}\otimes S_{0}G^*\otimes \wedge^t G^*  & \stackrel{\sigma _{c-1} }{ \dashrightarrow } &  G^*\otimes S_{0}G\otimes \wedge ^{c-1} & \stackrel{\varphi^*\otimes 1}{ \longrightarrow } & F^*\otimes S_{0}G\otimes \wedge ^{c-1}  &  &  &  & & &\\
  & & \downarrow &  & \downarrow & &  \downarrow & &&&&&& \\
   & &  S_{0}G\otimes \wedge ^{c-2}    & \stackrel{\sigma _{c-2} }{ \dashrightarrow } &  G^*\otimes S_{1}G\otimes \wedge ^{c-2}  & \stackrel{\varphi^*\otimes 1 }{ \longrightarrow } & F^*\otimes S_{1}G \otimes \wedge^{c-2} &  &  &  & & &\\
 & &\downarrow &  & \downarrow & &  \downarrow & &&&&&& \\
&  &\vdots &  & \vdots & &  \vdots & &&&&&& \\
 &  &\downarrow &  & \downarrow & &  \downarrow & &&&&&& \\
 &   &    S_{c-4}G\otimes \wedge^2   & \stackrel{\sigma _2 }{ \dashrightarrow } &  G^*\otimes S_{c-3}G\otimes \wedge^2  & \stackrel{\varphi^*\otimes 1 }{ \longrightarrow } & F^*\otimes S_{c-3}G\otimes \wedge^2 &  &  &  & & &\\
 &  &\downarrow &  & \downarrow & &  \downarrow & &&&&&& \\
  &  &   S_{c-3}G\otimes F  & \stackrel{\sigma _1 }{ \dashrightarrow } &  G^*\otimes S_{c-2}G\otimes F  & \stackrel{\varphi^*\otimes 1 }{ \longrightarrow } & F^*\otimes S_{c-2}G\otimes F &  &  &  & & &\\
   & &\downarrow &  & \downarrow & &  \downarrow & &&&&&& \\
    & &  S_{c-2}G  & \stackrel{\sigma _0 }{ \dashrightarrow } &  G^*\otimes S_{c-1} G & \stackrel{\varphi^*\otimes 1 }{ \longrightarrow }& F^*\otimes S_{c-1}G &  &  &  & & &\\
   & &\downarrow &  & \downarrow & &  \downarrow & &&&&&& \\
  0   & \rightarrow  & S_{c-2}M  & \rightarrow &  G^*\otimes S_{c-1}M  & \stackrel{\varphi^*\otimes 1 }{ \longrightarrow } & F^*\otimes S_{c-1}M & \, \hskip -1cm \rightarrow   \Ext^1
_R(M, S_{c-1}M)  \rightarrow  0\\
& & \downarrow &  & \downarrow & &  \downarrow & &&&&&& \\
& & 0 &  & 0 & & 0 & &&&&&
 \end{array} }
$$
$$ {\scriptsize \text{Diagram A}}
$$

Let us call $Q_{\bullet}$, $P_{\bullet}$ and $F_{\bullet}$ the resolutions of
$S_{c-2}M$, $G^*\otimes S_{c-1}M$ and $F^*\otimes S_{c-1}M$, respectively.
We need  to define morphisms of complexes:
$\sigma _{\bullet}: Q_{\bullet}\rightarrow P_{\bullet}$ and
$\ell _{\bullet}: Q_{\bullet}\rightarrow F_{\bullet}[1]$ satisfying all the hypothesis of Lemma \ref{technicallemma}. Let us first recall the definition of
$$\partial _q^p:S_pG\otimes \wedge ^qF \longrightarrow
S_{p+1}G\otimes \wedge ^{q-1}F.$$
To this end, we take $\{x_{i} \}_{i=1}^t$ an $R$-free basis of $G$ and let $\{x_{i}^* \}_{i=1}^t$ be the dual basis of $G^*$. According to \cite{eise}; pg. 592, $\partial _q^p$ takes an element $m\otimes f \in S_pG\otimes \wedge ^qF$ to the element $\sum _{i=1}^tx_{i}m\otimes \varphi^*(x_{i}^*)(f)\in S_{p+1}G\otimes \wedge ^{q-1}F$. For any integer $i$, $0\le i \le c-2$, we  define:  $$\sigma _{i}: S_{c-2-i}G\otimes \wedge ^{i}F      \longrightarrow    G^*\otimes S_{c-1-i}G\otimes \wedge ^{i}F $$ sending an element $m\otimes f \in S_{c-2-i}G\otimes \wedge ^{i}F$ to $\sigma _{i}(m\otimes f)=\sum _{j=1}^tx_{j}^*\otimes x_{j}m\otimes f\in G^*\otimes S_{c-1-i}G\otimes \wedge ^{i}F$.
It is  easy  to check that the following diagram commutes for any integer $i$, $0\le i \le c-3$:
 \[
 \begin{array}{ccccc}
    S_{c-3-i}G\otimes \wedge ^{i+1}F &  \stackrel{\sigma _{i+1} } {\longrightarrow } &  G^*\otimes S_{c-2-i}G\otimes \wedge ^{i+1}F \\
\downarrow  \scriptstyle{\partial ^{c-3-i}_{i+1}}& &  \downarrow \scriptstyle{
 1_{G^*}\otimes \partial ^{c-2-i}_{i+1}}\\ S_{c-2-i}G\otimes \wedge ^{i}F
  &  \stackrel{\sigma _{i} }  {\longrightarrow } &  G^*\otimes S_{c-1-i}G\otimes \wedge ^{i}F
 \end{array}.
\]
In this setting, we point out that  the definition of $\sigma _{c-2}$ implies the commutativity of the diagram
 \[
 \begin{array}{ccccc}
    S_{0}G\otimes \wedge ^{c-2}F &  \stackrel{\sigma _{c-2} } {\longrightarrow } &  G^*\otimes S_{1}G\otimes \wedge ^{c-2}F \\
\downarrow \simeq & &  \downarrow \simeq \\ R\otimes \wedge ^{c-2}F
  &  \stackrel{\text{ tr}\otimes 1
   }  {\longrightarrow } &  G^*\otimes G\otimes \wedge ^{c-2}F
 \end{array}
\]
i.e., $\sigma _{c-2}$ is induced by the trace map $R\longrightarrow G^*\otimes
G$ that is dual to the evaluation map $G^*\otimes G\longrightarrow R$ (i.e.
$\sigma _{c-2}={\rm tr}\otimes 1$). We will now define $\sigma _c$ and $\sigma
_{c-1}$ in such a way that the two left upper squares of the Diagram A commute
or anticommute. Dualizing (i.e. applying $\Hom_R(-,R)$) and using the
isomorphism $\wedge ^{i}F^*\cong \wedge ^{t+c-1-i}F\otimes \wedge
^{t+c-1}F^*\cong \wedge ^{t+c-1-i}F$, it will be sufficient to prove the
commutativity of the following diagram
$$\begin{array}{cccccccccc}
G & \stackrel{\sigma _c^*}{\longleftarrow }&  G\otimes R \\
\uparrow \scriptstyle{\varphi}& &  \uparrow\scriptstyle{\epsilon ^*_0} \\
F    &  \stackrel{\sigma _{c-1}^*}{\longleftarrow } & G\otimes
  S_{0}G^*\otimes \wedge ^t F  \otimes \wedge^t G^* \\
  \, \hspace{1.2cm}\uparrow\scriptstyle{(-1)^{t+1}\epsilon_1^*} & &  \, \hspace{1.2cm} \uparrow   \scriptstyle{1\otimes (\partial _{c-1}^0)^*}\\
  \wedge ^{t+1}F\otimes \wedge ^tG^*\otimes S_0G^*
 & \stackrel{\sigma _{c-2}^*}{\longleftarrow } & G\otimes  S_{1}G^*\otimes \wedge ^{t+1}
   F  \otimes \wedge^t G^* \\
   \| & & & \\
   (S_0G\otimes \wedge^{c-2}F)^* & & &
\end{array} .
$$
We take $\{y_i\}_{i=1}^{t+c-1}$ to be a free $R$-basis of $F$ and $\{y_{i}^*
\}_{i=1}^{t+c-1}$ to be the dual basis, and we let $\{\, i_1,i_2,
\cdots,i_{t+1}\}$, $ i_1 < i_2< \cdots < i_{t+1}$ be a subset of
$I:=\{1,2,\cdots,t+c-1\}$. According to \cite{Ki} or \cite{eise}; pages 592 and 593 (see
also the exact sequence (\ref{splice})), $\epsilon _0^*$ and $\epsilon _1^*$
are defined by
$$\epsilon_0^*(g\otimes y_{i_1}\wedge y_{i_2}\wedge \cdots \wedge y_{i_t}):=
s_{t+1} \cdot g\otimes \varphi (y_{i_1})\wedge \cdots \wedge  \varphi (y_{i_t}), \text{ and }$$
$$\epsilon_1^*(y_{i_1}\wedge y_{i_2}\cdots \wedge y_{i_{t+1}}):=
\sum_{j=1}^{t+1} s_j \cdot (\varphi (y_{i_1})\wedge \cdots \wedge \varphi
(y_{i_{j-1}})\wedge \varphi (y_{i_{j+1}}) \wedge \cdots \varphi
(y_{i_{t+1}}))y_{i_j}.$$ where $s_j$ is the sign of the permutation of $I$
that takes the elements of $\{\, i_1, i_2, \cdots, i_{j-1},i_{j+1},\cdots,
i_{t+1}\}$ into the first $t$ positions. Note that $\varphi(y_i)$ are the
columns of the matrix $\cA$ associated to $\varphi $, that $\varphi
(y_{i_1})\wedge \cdots \wedge \varphi (y_{i_t})$ is the maximal minor
corresponding to the columns $i_1, \cdots , i_t$ and that a replacement of
$\epsilon_1^*$ by $(-1)^{t+1}\epsilon_1^*$, cf. the diagram above, still makes
the leftmost column in Diagram A a free resolution of $ S_{c-2}M$. We define:
$$\sigma _c^*=Id_G, \text{ and }$$
$$\sigma _{c-1}^*(g\otimes y_{i_1}\wedge \cdots \wedge y_{i_t})=\sum _{j=1}^t
(\varphi (y_{i_1})\wedge \cdots \wedge \varphi (y_{i_{j-1}})\wedge g\wedge
\varphi (y_{i_{j+1}}) \wedge \cdots \varphi (y_{i_t}))y_{i_j}.$$ A
straightforward computation gives us the desired commutativity, namely, $$
\sigma _c^*\cdot \epsilon_0^*=\varphi \cdot \sigma _{c-1}^* \quad \text{ and }
\quad (-1)^{t+1}\epsilon_1^* \cdot \sigma _{c-2}^*=\sigma _{c-1}^* \cdot
(1\otimes (\partial _{c-1}^0)^*).$$

We will now define the morphism $\ell _{\bullet }: Q_{\bullet }\longrightarrow F_{\bullet} [1]$. For any integer $i$, $0\le i \le c-2$, we  define:
$$\ell _{i}: S_{c-2-i}G \otimes \wedge ^{i}F \longrightarrow F^*\otimes S_{c-2-i}G\otimes \wedge ^{i+1}F$$
sending an element $m\otimes f \in S_{c-2-i}G\otimes \wedge ^{i}F$ to $\ell _{i}(m\otimes f)=\sum _{j=1}^{t+c-1}y_{j}^*\otimes m\otimes (y_j\wedge f)\in F^*\otimes S_{c-2-i}G\otimes \wedge ^{i+1}F$. Using  the diagram

\[
\xymatrix{
  &S_{c-4-i}G\otimes \wedge^{i+2}F  \ar[r]_{\sigma _{i+2} }
  \ar[d]_{\partial _{i+2}^{c-4-i}} &G^*\otimes S_{c-3-i}G\otimes \wedge^{i+2}F  \ar[r]^{\varphi^*\otimes 1 } \ar[d]_{1\otimes \partial _{i+2}^{c-3-i}} &F^*\otimes S_{c-3-i}G\otimes \wedge^{i+2} \ar[d]^{1\otimes \partial _{i+2}^{c-3-i}}\\
      &S_{c-3-i}G\otimes \wedge^{i+1}F   \ar[r]_{\sigma _{i+1}}
       \ar[d]_{\partial _{i+1}^{c-3-i}} \ar[rru]_{\ell _{i+1}} &G^*\otimes S_{c-2-i}G\otimes \wedge^{i+1}F   \ar[r]^{\varphi^*\otimes 1 } \ar[d]_{1\otimes \partial _{i+1}^{c-2-i}} &F^*\otimes S_{c-2-i}G\otimes \wedge^{i+1}F \ar[d]^{1\otimes \partial _{i+1}^{c-2-i}}\\
           &S_{c-2-i}G\otimes \wedge^{i}F \ar[r]_{\sigma _{i}}  \ar[rru]_{\ell _{i}}
  &G^*\otimes S_{c-1-i}G\otimes \wedge^{i}F
   \ar[r]^{\varphi^*\otimes 1 } &F^*\otimes S_{c-1-i}G\otimes
    \wedge^{i}F
 }\]

 \noindent and \cite{eise}; Proposition A2.8 page 583 onto the derivation $\varphi^*(x_j^*)$, we check for any $i$, $0\le i \le c-3$, that
 $$ (\varphi^* \otimes 1) \cdot \sigma _{i+1} = (1_{F^*}\otimes \partial ^{c-3-i}_{i+2}) \cdot \ell _{i+1} + \ell _{i} \cdot \partial _{i+1}^{c-3-i}.$$
We dualize the top part of Diagram A and we define $\ell _c^*$ and $\ell _{c-1}^*$ (obviously $\ell _i^* =0$ for $i\ge c+1$) as follows:
$$\ell _c^*=Id_F, \text{ and }$$
$$\ell _{c-1}^*(f\otimes y_{i_1}\wedge \cdots \wedge y_{i_t})=f\wedge y_{i_1}\wedge \cdots \wedge y_{i_t}.$$
A direct calculation using the following diagram
\[
\xymatrix{&0 &0 &0\\
  &G  \ar[u] &G\otimes R  \ar[u] \ar[l]_{\sigma_c^* } &F\otimes R \ar[u] \ar[l]_{\varphi\otimes 1}\ar[lld] ^{\ell_{c}^*}\\
      &F   \ar[u]_{\varphi}
      &G\otimes S_0G^*\otimes \wedge  ^tF\otimes \wedge^{t}G ^*   \ar[l]^{\sigma_{c-1}^* } \ar[u]^{1\otimes \epsilon _{0}^*} &F\otimes S_0G^*\otimes \wedge  ^tF\otimes \wedge^{t}G ^*   \ar[u]_{\epsilon _{0}^*}  \ar[l]_{\varphi\otimes 1} \ar[lld] ^{\ell_{c-1}^*}\\
           &\wedge ^{t+1}F\otimes \wedge ^tG^*\otimes    S_{0}G
           \ar[u]_{(-1)^{t+1}\epsilon _{1}^* }
  &G\otimes S_1G^*\otimes \wedge  ^{t+1}F\otimes \wedge^{t}G ^*
  \ar[l]^{\sigma_{c-1}^* } \ar[u]^{1\otimes (\partial _{c-1}^{0})^*} &F\otimes S_1G^*\otimes \wedge  ^{t+1}F\otimes \wedge^{t}G ^* \ar[l]^{\varphi\otimes 1} \ar[u]_{1\otimes (\partial _{c-1}^{0})^*}
 }\]

\noindent gives us (recall that $\sigma_c^* =1$):
 $$\varphi \cdot \ell _c^* =\sigma_c^* \cdot (\varphi\otimes 1), \text{ and}$$
 $$(-1)^{t+1}\epsilon _1^* \cdot \ell _{c-1}^* +\ell _{c}^*\cdot \epsilon _0^*=\sigma_{c-1}^* \cdot (\varphi\otimes 1).$$

Now we are ready to apply Lemma \ref{technicallemma}. Since  $\ell _c= Id$ and  $\sigma _{c}=Id$, the corresponding summands split off and we deduce that $\pd \Ext^1_R(M,S_{c-1}M)=c$, i.e. $\Ext^1_R(M,S_{c-1}M)$ is a Maximal Cohen-Macaulay $A$-module.

\vskip 2mm
Finally we prove the last assertion  of the Theorem. We assume
$a_j=1$ for all $j$ and $b_i=0$ for all $i$. Recall that the degree of a
linear standard determinantal scheme $X\subset \PP^n$ of codimension $c$ is
${t+c-1\choose c}$. Using the exact sequence (\ref{mainseq}), we get
$$  \ _{-2}\Ext^1(M,S_{c-1}M)=0, \text{ and }$$
$$ \dim _K(_{-1}\,\Ext^1(M,S_{c-1}M))=(t+c-1) \cdot {t+c-2\choose c-1}= \rk (\Ext^1(M,S_{c-1}M))\cdot \deg(X).$$

\noindent Therefore, $\Ext^1(M,S_{c-1}M)$ is a maximal Cohen-Macaulay module maximally generated or, equivalently, $\widetilde{\Ext^1_R(M,S_{c-1}M)}$ is an Ulrich sheaf on $X$ of rank $c$.
\end{proof}

Given an ACM scheme $X\subset \PP^N$ with dualizing sheaf $\omega $ and a coherent sheaf $\shE$ on $X$, we denote by $\shE^{\omega}$ the sheaf $\shH om_{{\mathcal O}_X}(\shE,\omega)$. It is well known that $\shE$ is ACM if and only if $\shE^{\omega}$ is ACM. So, as a Corollary of Theorem \ref{normalACM}, we have:

\begin{corollary}
 We keep the notation introduced above  and we set $N:=\Ext^1_R(M,S_{c-1}M)$. If $\depth _JA\ge 2$ then,   $\Hom_A(N,K_A)$  is a maximal Cohen-Macaulay
$A$-module of rank $c$. Moreover,  if $a_j=1$ for all $j$ and $b_i=0$ for all $i$, then $\widetilde{\Hom_A(N,K_A)}$ is an Ulrich sheaf of rank $c$.
\end{corollary}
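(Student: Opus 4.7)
The plan is as follows. For the first assertion I will invoke general Cohen--Macaulay duality. Over the Cohen--Macaulay quotient $A$ with canonical module $K_A$, the functor $\Hom_A(-,K_A)$ is an exact contravariant autoduality on the category of maximal Cohen--Macaulay $A$-modules, and it preserves ranks (computed at the minimal primes, where $A$ is Gorenstein). Since Theorem~\ref{normalACM} asserts that $N=\Ext^1_R(M,S_{c-1}M)$ is MCM of rank $c$, it follows immediately that $\Hom_A(N,K_A)$ is MCM of rank $c$. At the sheaf level this is just the ``ACM $\Leftrightarrow$ ACM-dual is ACM'' remark recorded in the paragraph just above the statement of the corollary: $\widetilde{N}$ is ACM, hence $\widetilde{N}^{\omega}=\widetilde{\Hom_A(N,K_A)}$ is ACM, and the rank is preserved by the sheaf dual.

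For the Ulrich assertion, assume $a_j=1$ and $b_i=0$. Theorem~\ref{normalACM} then gives that $\widetilde{N}$ is Ulrich, and the dimension count at the end of its proof (namely ${}_{-2}N=0$ and $\dim_K {}_{-1}N=\rk(N)\cdot\deg(X)$) shows that $N$ is generated in degree $-1$, so that $\widetilde{N}(1)$ is initialized Ulrich. Applying Theorem~\ref{equivconditionsulrich}(ii) I can write down a pure linear graded free $R$-resolution
\[
0\to R(-c)^{a_c}\to R(-c+1)^{a_{c-1}}\to\cdots\to R(-1)^{a_1}\to R^{a_0}\to N(1)\to 0
\]
of length $c=\codim_{\PP^n}X$. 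I then dualize with $\Hom_R(-,R)$: since $N$ is MCM of codimension $c$ in $R$, one has $\Ext^i_R(N,R)=0$ for $i\ne c$, so the dualized complex is exact off the rightmost spot and produces a pure linear graded free $R$-resolution of $\Ext^c_R(N(1),R)$ whose Betti numbers are the reverse sequence $a_0,a_1,\ldots,a_c$.

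Twisting this resolution by $-n-1$ converts $\Ext^c_R(N(1),R)(-n-1)$ into $\Hom_A(N(1),K_A)$, and a further initializing shift by $d=n-c$ realigns the generating degrees to $0,-1,\ldots,-c$, producing a resolution of precisely the shape demanded by Theorem~\ref{equivconditionsulrich}(ii). That theorem then yields that the corresponding sheaf $\widetilde{\Hom_A(N(1),K_A)}(d)$ is Ulrich, and since the Ulrich property in the sense of Definition~\ref{ACM}--Theorem~\ref{equivconditionsulrich} only depends on the initialized twist, the untwisted sheaf $\widetilde{\Hom_A(N,K_A)}$ is Ulrich of rank $c$. The only delicate part I foresee is the degree bookkeeping across the two successive twists by $-n-1$ and $d$; this is however forced to be consistent because the dual of a pure linear resolution of length $c$ is again pure linear of length $c$, so no mapping-cone surgery or extra cancellation is required once the first part of Theorem~\ref{normalACM} has been established.
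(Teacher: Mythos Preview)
Your argument is correct and, for the first assertion, coincides verbatim with the paper's: the paper states the corollary as an immediate consequence of Theorem~\ref{normalACM} together with the one-line observation preceding it that $\shE$ is ACM if and only if $\shE^{\omega}$ is ACM (equivalently, $\Hom_A(-,K_A)$ is a rank-preserving autoduality on MCM $A$-modules). No separate proof is given in the paper.

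For the Ulrich assertion the paper says nothing beyond ``So, as a Corollary of Theorem~\ref{normalACM}\ldots'', implicitly relying on the well-known fact that the $\omega$-dual of an Ulrich sheaf is again Ulrich. Your route---dualize the pure linear $R$-resolution supplied by Theorem~\ref{equivconditionsulrich}(ii), use $\Ext^i_R(N,R)=0$ for $i\neq c$ to see the dual complex is again a pure linear resolution of length $c$, and then reapply Theorem~\ref{equivconditionsulrich}(ii)---is exactly the standard way one proves that well-known fact, so you are simply unpacking what the paper leaves implicit. This has the advantage of being self-contained and of making transparent that the Betti numbers of $\Hom_A(N,K_A)$ are those of $N$ read in reverse.

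One small bookkeeping slip: since ${}_{-2}N=0$ and ${}_{-1}N\neq 0$, the initialized twist is $\widetilde{N}(-1)$, not $\widetilde{N}(1)$ (consistently with Theorem~\ref{normalresolution}, where $\shN_X(\mathcal M^{c-2})(-H)$ is the initialized Ulrich sheaf). Your linear resolution should therefore resolve $N(-1)$, and the subsequent twists shift accordingly. As you yourself anticipate, this does not affect the argument: the dual of a pure linear resolution of length $c$ is again pure linear of length $c$, so some twist of $\widetilde{\Hom_A(N,K_A)}$ satisfies Theorem~\ref{equivconditionsulrich}(ii), and the Ulrich property is by definition insensitive to which twist one initializes at.
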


\begin{remark}\label{Ext1(M,SiM)} \rm
 Keeping the notation introduced  above and arguing as in the proof of Theorem \ref{normalACM},  we can prove that for any $i$, $0\le i \le c$, there exits an exact sequence
  \begin{equation}\label{seqExt1(M,SiM)}
    0 \longrightarrow S_{i-1}M   \longrightarrow G^*\otimes S_{i}M  \longrightarrow F^*\otimes S_{i}M \longrightarrow \Ext^1
    _R(M, S_{i}M)  \longrightarrow 0 \end{equation}
  provided $\depth _JA\ge 2$  where $J=I_{t-1}(\cA)$. Note that we interpret
  $S_{-1}M$ as  $\Hom_A(M,A)$.

  Hence we get a big diagram, similar to diagram A above, where we have
  replaced $c-1$ by $i$. The part of the proof of Theorem \ref{normalACM}
  where we show the existence of $\{\sigma_j\}_{j=0}^c$ between the two
  leftmost columns (we call this part of the big diagram by (*)), seems to
  hold for any $i$, $0\le i \le c$. Since $\sigma_{i}$ is injective (except
  for $i=c$), this would imply that the length of the projective resolution of
  $\Ext^1 _R(M, S_{i}M)$ is at most $\pd A +1$ for $0\le i \le c-1$. The only
  problem to get this result will be to define $\sigma _j$ more generally and to verify the commutativity of the first
  diagram in the proof of Theorem \ref{normalACM} where the dual of the
  ``splice'' maps $\epsilon_j, j=0,1$ occur (we call this diagram (**) after
  having replaced $c-1$ by $i$ and made the corresponding obvious changes).

  The case $i=1$ was treated in \cite{K2011}; Proof of Theorem 3.1. That proof
  is almost the dual of the proof of this part of Theorem \ref{normalACM}.
  Indeed the lower part of diagram (*) for $i=1$ is exactly diagram (**)
  provided we move $G^*$ from the second column to $G$ in the first column in
  diagram (*). Thus the proof of Theorem 3.1 in \cite{K2011} implies this
  part of the proof of Theorem \ref{normalACM} and vice versa.

  The existence of the morphisms $\sigma_j$ in diagram (*) in the case $i=c$ is
  very similar (and easier) to what we had to prove in Theorem \ref{normalACM}
  for $i=c-1$. Indeed, in this case we only need to check diagram (**) where
  now only one $\epsilon_j$ occur. The mapping cone construction leads to a
  resolution of $\Ext^1 _R(M, S_{c}M)$ where, however, the leftmost free
  module (of rank one) clearly does not split off, whence the length of a
  minimal resolution must be $\pd A +2$. As explained for $i=1$ as almost the
  ``dual'' of $i=c-1$ above, the case $i=0$ is similarly ``dual'' to $i=c$. We
  deduce the existence of a morphism between the $R$-free resolutions of $
  S_{-1}M$ and $G^* \otimes A$. In this case it is easy to see that the
  leftmost free module in the resolution of $\Ext^1 _R(M, A)$ split off. In
  particular, we get
\[ \pd \Ext^1 _R(M, S_{i}M) \le \pd A +1 \ \ {\rm for} \ \ i \in \{0,1,c-1\} \
. \]
\end{remark}

The importance of proving $\pd \Ext^1 _R(M, S_{i}M)$ to be small follows from
our next proposition because it leads to good depth of ``twisted normal
modules''. Indeed, in \cite{K2011} we use this to prove both conjectures
appearing in \cite{KM11} on the dimension and smoothness of
the locus of
determinantal schemes inside the Hilbert scheme.
\begin{proposition} \label{ext-norm} With the above notation, set $J=I_{t-1}(\cA)$. We have
\begin{itemize}
\item[(1)] $\Ext^1_R(M,S_{c-1}M)\cong \Hom _A(I/I^2,S_{c-2}M)$
provided $\depth _J A\ge 2$; and
\item[(2)] $\Ext^1_R(M,S_{i}M)\cong \Hom _A(I/I^2,S_{i-1}M)$ for $0\le i \le c$, $i\ne c-1$,
provided $\depth _J A\ge 4$.
\end{itemize}
\end{proposition}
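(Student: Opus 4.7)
My plan is to construct a natural map
$$\Phi_i\colon \Ext^1_R(M, S_iM)\longrightarrow \Hom_A(I/I^2, S_{i-1}M),$$
check that it is an isomorphism after restriction to the open set $U=X\setminus V(J)$, and then use the depth hypotheses to globalize. Throughout I interpret $S_{-1}M$ as $\Hom_A(M,A)$ when $i=0$. The construction of $\Phi_i$ uses that $\depth_JA\ge 2$ already gives $\Hom_R(M,S_iM)\cong S_{i-1}M$ via \eqref{Hom(M,SiM)} (and this is tautological for $i=0$). Given an extension $\xi\colon 0\to S_iM\to E\to M\to 0$ and an element $f\in I$, the rule $m\mapsto f\cdot\tilde m$, for any lift $\tilde m\in E$ of $m\in M$, lands in $S_iM$ (since $fm=0$), is independent of the lift (since $f$ annihilates $S_iM$), and is $R$-linear; hence it yields an element of $\Hom_R(M,S_iM)\cong S_{i-1}M$. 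The resulting $R$-linear map $I\to S_{i-1}M$ vanishes on $I^2$ since for $f=gh$ with $g,h\in I$ one has $f\tilde m=g(h\tilde m)\in g\cdot S_iM=0$, so it factors through $I/I^2$ and yields $\Phi_i(\xi)$.

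For the isomorphism on $U$, I will sheafify and restrict. On $U$ the sheaf $\widetilde M$ is invertible, the inclusion $U\hookrightarrow\PP^n$ is a regular embedding of codimension $c$, and a local Koszul-resolution computation identifies
$${\mathcal E}xt^1_{{\mathcal O}_{\PP^n}}\bigl(\widetilde M,\widetilde{S_iM}\bigr)\big|_U\;\cong\;\widetilde{S_{i-1}M}|_U\otimes{\mathcal N}_U\;=\;{\mathcal H}om_{{\mathcal O}_U}\bigl(\widetilde{I/I^2}|_U,\widetilde{S_{i-1}M}|_U\bigr).$$
A direct local calculation should show that the sheafified $\Phi_i$ realizes precisely this standard isomorphism, so $\widetilde\Phi_i|_U$ is an isomorphism of sheaves on $U$.

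To globalize, I apply the comparison \eqref{NM} to both sides. The module $\Hom_A(I/I^2,S_{i-1}M)$ is the kernel of a map between finite direct sums of copies of $S_{i-1}M$, so a standard chase in local cohomology gives $\depth_J\Hom_A(I/I^2,S_{i-1}M)\ge\depth_J S_{i-1}M\ge 2$; hence it coincides with the graded sections of its sheafification over $U$. In case (1), Theorem~\ref{normalACM} gives that $\Ext^1_R(M,S_{c-1}M)$ is MCM, so its $J$-depth equals $\depth_JA\ge 2$, and combining with the local isomorphism on $U$ yields the asserted isomorphism under just $\depth_JA\ge 2$. In case (2), I apply the local cohomology chase to the exact sequence \eqref{seqExt1(M,SiM)}, which yields $\depth_J\Ext^1_R(M,S_iM)\ge\depth_JS_iM-2\ge\depth_JA-2$; under $\depth_JA\ge 4$ this is at least $2$, so $H^0_J$ and $H^1_J$ of $\Ext^1_R(M,S_iM)$ vanish and \eqref{NM} identifies it with its sections over $U$ as well.

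The main obstacle will be the local-on-$U$ assertion: verifying that $\widetilde\Phi_i|_U$ really coincides with the tautological identification produced by the Koszul resolution of $\mathcal{O}_U$. This requires unwinding the ``multiplication by $f$'' construction of $\Phi_i$ in terms of Koszul cocycles representing ${\mathcal E}xt^1_{{\mathcal O}_{\PP^n}}(\widetilde M,\widetilde{S_iM})|_U$; the remaining depth chase in case (2) is then routine given the hypothesis $\depth_JA\ge 4$, and the distinction between the two cases reflects exactly the extra length ($+1$ or $+2$) of the resolution of $\Ext^1_R(M,S_iM)$ produced in Remark~\ref{Ext1(M,SiM)} when $i\ne c-1$.
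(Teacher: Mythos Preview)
Your approach is correct and genuinely different from the paper's. The paper proves both parts via the change-of-rings spectral sequence
\[
\Ext^p_A\bigl(\Tor_q^R(M,A),\,S_iM\bigr)\ \Longrightarrow\ \Ext^{p+q}_R(M,S_iM)
\]
and its five-term exact sequence. The key input is the vanishing $\Ext^1_A(M,S_iM)=\Ext^2_A(M,S_iM)=0$: for $i=c-1$ this is automatic because $S_{c-1}M$ is a twist of $K_A$ and $M$ is MCM, while for $i\ne c-1$ it is obtained from \eqref{NM} under $\depth_JA\ge 4$. One then identifies the edge term $\Hom_A(\Tor_1^R(M,A),S_iM)\cong\Hom_A(M\otimes_RI,S_iM)$ with $\Hom_A(I/I^2,S_{i-1}M)$ via adjunction and another application of \eqref{NM}.

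Your route instead builds the explicit ``multiplication by $f$'' map $\Phi_i$, checks it is the tautological isomorphism on $U$ (your local verification is exactly the pushout description of $\Ext^1_B(\bar B,L)\cong\Hom(I_p/I_p^2,L)$, so this step goes through), and then globalizes by showing both sides have $J$-depth $\ge 2$. For case~(1) you invoke Theorem~\ref{normalACM} to get MCM-ness of $\Ext^1_R(M,S_{c-1}M)$, whereas the paper's argument for (1) is lighter, using only that $M$ is MCM and $K_A$ is dualizing. For case~(2) your depth chase on \eqref{seqExt1(M,SiM)} and the paper's vanishing of $\Ext^{1,2}_A$ both consume exactly the hypothesis $\depth_JA\ge 4$. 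The paper's spectral-sequence formulation pays off later in the article (Proposition~\ref{equivconj}, Lemma~\ref{key}, Theorem~\ref{newthm}), where the same machinery handles higher $\Ext$-groups; your explicit map is more transparent here but does not directly scale to those generalizations.
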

\begin{proof} We prove (1) and (2) simultaneously. By  Proposition \ref{resol} (ii), the canonical module $K_A(v)\cong S_{c-1}(M)$ for some integer $v$ and by Proposition \ref{resol} (ii) $M$ is a maximal Cohen-Macaulay $A$-module. Therefore, we have
\begin{equation}\label{extMK}
\Ext  ^{i}_A(M,K_A)=0 \text{ for } i\ge 1.
\end{equation}
Using \eqref{NM} as in the proof of Theorem 4.1 in \cite{K2011}, we get
that
\begin{equation}\label{ext(M,SiM)}
\Ext  ^{j}_A(M,S_iM)=0 \text{ for } 1\le j\le 2,
\end{equation}
provided $\depth _J A\ge 4$.

Under the assumption $\depth _J A\ge 2$, we have seen in (\ref{Hom(M,SiM)})
that for any $i$, $1 \le i \le c$
$$
\Hom_A(M,S_{i}M) \cong S_{i-1}M
$$
(true also for $i=0$). In particular, we have \begin{equation}\label{homMK}
  \Hom_A(M,K_A)\cong S_{c-2}(M)(-v).\end{equation}

Notice that the isomorphism $$\Hom_R(M,S_{i}M)\cong \Hom _A(M\otimes _R A,S_{i}M)$$ leads to a spectral sequence $$\Ext ^p_A(\Tor_q ^R(M,A),S_{i}M)\Rightarrow \Ext ^{p+q}_R(M,S_{i}M);$$
a part of the usual 5-term associated sequence is
$$0\longrightarrow \Ext ^{1}_A(M\otimes _R A,S_{i}M)\longrightarrow
\Ext ^{1}_R(M,S_{i}M)\longrightarrow \Hom_A(\Tor_1 ^R(M,A),S_{i}M)\longrightarrow \Ext ^{2}_A(M\otimes _R A,S_{i}M),
$$
which using (\ref{extMK}), (\ref{ext(M,SiM)})  and the exactness of the sequence
 $$0\longrightarrow \Tor_1 ^R(M,A)\longrightarrow M\otimes _R I \longrightarrow M\otimes _R R \stackrel {\simeq }
{ \longrightarrow} M\otimes _R A\longrightarrow 0$$
allows us to conclude that  $$\Ext ^{1}_R(M,S_{i}M)\cong \Hom_A(M\otimes _R I,S_{i}M).$$
 Using \eqref{NM} we get $$\begin{array}{ccl} \Hom_A (M\otimes _R I,S_{i}M) & \cong & \Hom_A (M\otimes I/I^2,S_{i}M) \\
  &\cong & \Hom _A(I/I^2, \Hom _A(M,S_{i}M)) \\ & \cong & \Hom _A(I/I^2,
  S_{i-1}M), \end{array}$$ and we are done.
 \end{proof}

 \begin{remark}\label{lastrem} \rm We can improve upon (2) of Proposition
   \ref{ext-norm} in the case $i=0$ and get
$$\Ext^1_R(M,A)\cong  \Hom _A(I/I^2, \Hom _A(M,A))$$
only assuming $\depth _JA\ge 3$. Indeed this depth condition implies
$\Ext^1_A(M,A)=0$ by \eqref{NM} and if we can show $\Ext^2_A(M,A)=0$ of
\eqref{ext(M,SiM)} by another argument, then the proof above applies to get
the claim. To see $\Ext^2_A(M,A)=0$ we remark that $S_{c-1}M$ is a twist of
the canonical module. This implies $\Ext^2_A(M,A)\cong \Ext^2_A(M \otimes
S_{c-1}M ,S_{c-1}M)$ by a spectral sequence argument, while we get $\Ext^2_A(M
\otimes S_{c-1}M ,S_{c-1}M) \cong \Ext^2_A(S_{c}M,S_{c-1}M)$ by using that $
\widetilde M \otimes \widetilde{S_{c-1}M} \cong \widetilde{S_{c}M}$ if we
restrict to $\Proj(A)-V(J)$, cf. \cite{K2011}; proof of Theorem 4.5 (the text
after the diagram (4.3)) for details. Then we conclude by Gorenstein duality.
\end{remark}

 \begin{theorem} \label{normalresolution} Let $X\subset \PP^n$ be a standard
   determinantal scheme of codimension $c\ge 2$ associated to a $t\times
   (t+c-1)$ matrix $\cA$. Set $J=I_{t-1}(\cA)$ and assume $\depth _JR/I(X)\ge
   2$. Then, ${\mathcal H}om _{\odi{X}}(\mathcal{I}_X/\mathcal{I}_X^2,
   \widetilde{S_{c-2}M})$ is an ACM sheaf of rank $c$. In addition, if $X$
   is a linear standard determinantal scheme, then ${\mathcal H}om
   _{\odi{X}}(\mathcal{I}_X/\mathcal{I}_X^2, \widetilde{S_{c-2}M})(-H)$ is an
   initialized Ulrich sheaf of rank $c$ and it has a pure linear
   $\odi{\PP^n}$-resolution of the following type:
 $$
0 \arr \odi{\PP^n}(-c)^{a_{c}}\arr \dots \arr \odi{\PP^n}(-1)^{a_1}\arr\odi{\PP^n}^{a_0}\arr {\mathcal H}om _{\odi{X}}(\mathcal{I}_X/\mathcal{I}_X^2,
   \widetilde{S_{c-2}M})(-H) \arr 0
$$
with $a_{0}=c\cdot \deg(X)=c\cdot {t+c-1\choose c}$ and $a_i={c\choose i}\cdot
a_0$ for $1\le i \le c$. Finally, if $X$ is a local complete intersection then
${\mathcal H}om _{\odi{X}}(\mathcal{I}_X/\mathcal{I}_X^2,
\widetilde{S_{c-2}M}) \cong {\mathcal N}_X\otimes \widetilde{S_{c-2}M}$ and
$$\Hl^{i}_*(X,{\mathcal N}_X\otimes \widetilde{S_{c-2}M})=0 \ \ {\rm for} \ 1\le
i\le n-c-1.$$
\end{theorem}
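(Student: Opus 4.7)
The plan is to combine Theorem~\ref{normalACM} and Proposition~\ref{ext-norm}(1), sheafify, and then invoke the general theory of Ulrich sheaves (Theorem~\ref{equivconditionsulrich}) to pin down the resolution in the linear case.

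First, Proposition~\ref{ext-norm}(1), which needs only $\depth_J A \ge 2$, supplies a graded $A$-module isomorphism $\Hom_A(I/I^2,S_{c-2}M) \cong \Ext^1_R(M,S_{c-1}M)$. Since $I/I^2$ is finitely presented, sheafification gives
\[
  \mathcal{H}om_{\odi{X}}(\mathcal{I}_X/\mathcal{I}_X^2,\widetilde{S_{c-2}M}) \;\cong\; \widetilde{\Ext^1_R(M,S_{c-1}M)}.
\]
Theorem~\ref{normalACM} shows the right-hand module is MCM of rank $c$, and Proposition~\ref{bijection} promotes this to an ACM sheaf of rank $c$, which is the first assertion.

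For the linear case, Theorem~\ref{normalACM} upgrades the MCM property to Ulrich. The computations recorded at the end of its proof show that $\Ext^1_R(M,S_{c-1}M)$ vanishes in degree $-2$ and is non-zero in degree $-1$, so
\[
  \mathcal{H}om_{\odi{X}}(\mathcal{I}_X/\mathcal{I}_X^2,\widetilde{S_{c-2}M})(-H)\;\cong\;\widetilde{\Ext^1_R(M,S_{c-1}M)(-1)}
\]
is exactly the initialised representative. Theorem~\ref{equivconditionsulrich}(ii) then automatically guarantees a pure linear $\odi{\PP^n}$-resolution of length $n-\di X=c$. To identify the Betti numbers I would invoke part (iv) of that theorem: a generic finite linear projection $\pi\colon X\to \PP^{n-c}$ sends our Ulrich sheaf to $\odi{\PP^{n-c}}^{a_0}$, so its graded module is free of rank $a_0$ over $R/(z_1,\dots,z_c)$ for a regular sequence $z_1,\dots,z_c$ of linear forms cutting out the complementary linear subspace. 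The minimal $R$-resolution of this free module is the Koszul complex on $(z_1,\dots,z_c)$ amplified by $a_0$, which yields $a_i=\binom{c}{i}a_0$. The value $a_0=c\cdot\deg(X)=c\binom{t+c-1}{c}$ is precisely the Ulrich identity $h^0(\shE_{\mathrm{init}})=\rk(\shE)\cdot\deg(X)$, already verified inside the proof of Theorem~\ref{normalACM}.

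For the final assertion, if $X$ is a local complete intersection then $\mathcal{I}_X/\mathcal{I}_X^2$ is locally free of rank $c$, so $\mathcal{H}om_{\odi{X}}(\mathcal{I}_X/\mathcal{I}_X^2,\widetilde{S_{c-2}M})\cong \mathcal{N}_X\otimes \widetilde{S_{c-2}M}$. The ACM property just proved, together with Definition~\ref{ACM}, then forces $H^{i}_*(X,\mathcal{N}_X\otimes \widetilde{S_{c-2}M})=0$ for $1\le i\le \di X-1=n-c-1$. The main obstacle does not lie in the present theorem but in the two inputs it relies on: the delicate mapping-cone construction inside Theorem~\ref{normalACM} and the Ext-to-Hom translation of Proposition~\ref{ext-norm}(1); once these are granted, the theorem follows by a short chain of sheafifications and standard Ulrich arguments.
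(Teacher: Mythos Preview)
Your argument follows the paper's proof almost verbatim: both invoke Proposition~\ref{ext-norm}(1) to identify $\mathcal{H}om_{\odi{X}}(\mathcal{I}_X/\mathcal{I}_X^2,\widetilde{S_{c-2}M})$ with $\widetilde{\Ext^1_R(M,S_{c-1}M)}$, apply Theorem~\ref{normalACM} for the MCM/Ulrich property, and cite Theorem~\ref{equivconditionsulrich} (i.e.\ \cite{ESW}, Proposition~2.1) for the linear resolution. You also supply the short LCI argument that the paper leaves implicit.

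There is one imprecision worth flagging in your computation of the Betti numbers. From $\pi_*\shE\cong\odi{\PP^{n-c}}^{a_0}$ you conclude that the graded module $E=\Hl^0_*(\shE)$ is free of rank $a_0$ over $R/(z_1,\dots,z_c)$ and then resolve it by the amplified Koszul complex. But $E$ is free over the \emph{subring} $R'=K[y_0,\dots,y_{n-c}]\hookrightarrow R$ generated by the pullback of the target coordinates; it is \emph{not} annihilated by the $z_i$, so it is not an $R/(z_1,\dots,z_c)$-module at all, and its minimal $R$-resolution is not the Koszul complex. What does survive is the Hilbert series: freeness over $R'$ gives $H_E(t)=a_0/(1-t)^{n-c+1}$, while the linear resolution forces $H_E(t)=\bigl(\sum_i(-1)^ia_it^i\bigr)/(1-t)^{n+1}$; comparing, $\sum_i(-1)^ia_it^i=a_0(1-t)^c$, whence $a_i=\binom{c}{i}a_0$. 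Equivalently, once the resolution is known to be pure linear, the Herzog--K\"uhl equations determine the $a_i$ from $a_0$. With this correction your proof is complete and matches the paper's.
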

\begin{proof} 
We can apply Proposition
  \ref{ext-norm} and we get ${\mathcal H}om
  _{\odi{X}}(\mathcal{I}_X/\mathcal{I}_X^2, \widetilde{S_{c-2}M}) \cong
  \widetilde{\Ext^1_R(M,S_{c-1}M)}$. Therefore,
  applying Theorem \ref{normalACM} we conclude that ${\mathcal H}om
  _{\odi{X}}(\mathcal{I}_X/\mathcal{I}_X^2,
  \widetilde{S_{c-2}M})$ is an
  ACM sheaf of rank $c$ on $X$ and if $X$ is a linear standard determinantal
  scheme, then ${\mathcal H}om _{\odi{X}}(\mathcal{I}_X/\mathcal{I}_X^2,
  \widetilde{S_{c-2}M})(-H)$ is an initialized Ulrich sheaf of rank $c$, cf. the Corollary below
  for the twist. In
  this case the minimal $\odi{\PP^n}$-resolution of ${\mathcal H}om
  _{\odi{X}}(\mathcal{I}_X/\mathcal{I}_X^2, \widetilde{S_{c-2}M})(-H)$ is
  given by \cite{ESW}, Proposition 2.1.
\end{proof}

As an immediate application of Theorem \ref{normalACM} and its proof we obtain
a free $R$-resolution 
of $\Ext^1_R(M,S_{c-1}M)$, and more generally of $\Ext^1_R(M,S_{i}M)$ for
$0\le i \le c$, which we make explicit for later use. In fact, we have:

\begin{corollary}\label{res}
  We keep the notation introduced above and we call $Q_{\bullet}$,
  $P_{\bullet}$ and $F_{\bullet}$ the minimal free $R$-resolutions of
  $S_{i-1}M$, $G^*\otimes S_{i}M$ and $F^*\otimes S_{i}M$, respectively. Then
  $\Ext^1_R(M,S_{i}M)$, for $0\le i \le c$, has a free $R$-resolution of the
  following type:
$$
0\longrightarrow Q_{c}\longrightarrow Q_{c-1}\oplus P_{c} \longrightarrow Q_{c-2}\oplus P_{c-1}\oplus F_c \longrightarrow Q_{c-3}\oplus P_{c-2}\oplus F_{c-1} \longrightarrow \cdots $$
$$ \longrightarrow Q_{0}\oplus P_{1}\oplus F_{2} \longrightarrow  P_{0}\oplus F_{1} \longrightarrow F_{0} \longrightarrow \Ext^1_R(M,S_{i}M)\longrightarrow 0
$$
provided $\depth _JA\ge 2$ where $J=I_{t-1}(\cA)$. In particular, we have
$$\depth \Ext^1(M,S_{i}M)\ge \depth A-2.$$
Moreover if \, $i=c-1$ (resp. $ i \in \{0,1\}$) we may delete $Q_{c}, P_{c}$
and $Q_{c-1}, F_c$ (resp. $Q_{c}$ and a subsummand of $P_{c}$) from this
resolution, and we get $$\depth \Ext^1(M,S_{i}M)\ge \depth A-1 \text{ for } i=0,1; \text{ and }$$
$$\depth \Ext^1(M,S_{c-1}M)= \depth A.$$
\end{corollary}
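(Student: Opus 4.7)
The plan is to deduce this corollary directly from the machinery developed in the proof of Theorem \ref{normalACM}. The starting point is Remark \ref{Ext1(M,SiM)}, which provides, for every $0\le i \le c$ under the hypothesis $\depth _J A\ge 2$, the four-term exact sequence \eqref{seqExt1(M,SiM)}. This will serve as the ``bottom row'' of a three-column expansion diagram, analogous to Diagram~A in the proof of Theorem \ref{normalACM}, whose columns are the minimal free $R$-resolutions $Q_\bullet$, $P_\bullet$ and $F_\bullet$ of $S_{i-1}M$, $G^*\otimes S_iM$ and $F^*\otimes S_iM$, each of length $c$ by Proposition \ref{resol}(ii).

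Next, I would define the vertical morphisms $\sigma_\bullet\colon Q_\bullet\to P_\bullet$ and the null-homotopy $\ell_\bullet\colon Q_\bullet\to F_\bullet[1]$ by transporting the explicit formulas given in the proof of Theorem \ref{normalACM} (with $c-1$ replaced by $i$). The compatibility identities required by Lemma \ref{technicallemma}---the (anti)commutativity of the individual squares together with the relation $d_F^{i+1}\ell_i+\ell_{i-1}d_Q^{i}=\tau_i\sigma_i$---can then be checked row by row using the derivations $\partial_q^p$ and \cite{eise}, Proposition A2.8, just as in the codimension $c-1$ case. With these data in place, Lemma \ref{technicallemma} applies directly and its mapping cone output matches the displayed resolution of $\Ext^1_R(M,S_iM)$, whose length is at most $c+2$.

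Given this resolution, the general depth estimate is immediate from the Auslander-Buchsbaum formula: since $A$ is Cohen-Macaulay of codimension $c$ we have $\depth A=n+1-c$, and $\pd_R \Ext^1_R(M,S_iM)\le c+2$ translates into $\depth \Ext^1_R(M,S_iM)\ge n+1-(c+2)=\depth A-2$. The improved estimates in the three distinguished cases follow from further cancellations in the mapping cone, as already hinted at in Remark \ref{Ext1(M,SiM)}. For $i=c-1$, the splice structure exhibited inside the proof of Theorem \ref{normalACM} shows that $\sigma_c$ and $\ell_c$ are identity morphisms, so the pairs $(Q_c,P_c)$ and $(Q_{c-1},F_c)$ split off from the mapping cone and we recover a minimal resolution of length exactly $c$; this is consistent with the equality $\depth \Ext^1_R(M,S_{c-1}M)=\depth A$ already established in Theorem \ref{normalACM}. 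For $i\in\{0,1\}$ only one such cancellation survives: $Q_c$ cancels with a subsummand of $P_c$, shortening the resolution to length at most $c+1$ and yielding $\depth \ge \depth A-1$ by Auslander-Buchsbaum.

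The main obstacle I anticipate is the bookkeeping at the top of the expansion diagram, where the splice maps $\epsilon_0^*$ and $\epsilon_1^*$ from \eqref{splice} control how the three resolutions are stitched together: one has to verify carefully that the explicit formulas for $\sigma_c^*$, $\sigma_{c-1}^*$, $\ell_c^*$ and $\ell_{c-1}^*$ continue to satisfy the required (anti)commutativity once $c-1$ is replaced by an arbitrary $i$, and that in the distinguished cases $i\in\{0,1,c-1\}$ the resulting maps genuinely reduce to identities on a direct summand, so that the anticipated cancellations carry through without destroying minimality of the $R$-resolution.
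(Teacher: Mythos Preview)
Your proposal is correct and follows essentially the same route as the paper: invoke the four-term exact sequence of Remark~\ref{Ext1(M,SiM)}, feed the three resolutions $Q_\bullet,P_\bullet,F_\bullet$ into Lemma~\ref{technicallemma}, read off the length-$(c+2)$ resolution and the depth bound via Auslander--Buchsbaum, and then quote the proof of Theorem~\ref{normalACM} (resp.\ Remark~\ref{Ext1(M,SiM)}) for the explicit cancellations when $i=c-1$ (resp.\ $i\in\{0,1\}$). One small simplification: for the \emph{existence} of the general resolution you do not need to transport the explicit formulas for $\sigma_\bullet$ and $\ell_\bullet$; abstract lifting of chain maps and the fact that the composite $S_{i-1}M\to G^*\otimes S_iM\to F^*\otimes S_iM$ is zero (hence $\tau_\bullet\sigma_\bullet$ is null-homotopic) already supply the data required by Lemma~\ref{technicallemma}, so the explicit formulas are needed only where you actually use them, namely to verify the splittings in the distinguished cases.
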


\begin{proof} Using Lemma \ref{technicallemma} and Remark \ref{Ext1(M,SiM)} we
  get a free $R$-resolution of $\Ext^1_R(M,S_{i}M)$ of the form above. If
  $i=c-1$ (resp. $ i \in \{0,1\}$) the mentioned summands split off by the
  proof of Theorem \ref{normalACM} (resp. Remark \ref{Ext1(M,SiM)}).
\end{proof}
Note that we do not claim that this resolution is minimal (not even for $i =
c-1$) since we have not carefully analyzed all possible cancelation of
repeated direct summands in the mapping cone construction. Nevertheless, there is a particular case where we can assure that {\em all} repeated summands split off. Indeed, if $i=c-1$, $a_j=1$ for all $j$
and $b_s=0$ for all $s$ then  the module $\Ext^1_R(M,S_{c-1}M)$ is Ulrich.  By Proposition \ref{equivconditionsulrich} (ii), it has a pure linear resolution and therefore, we may delete not only $Q_{c}, P_{c}$ and $Q_{c-1}, F_c$ but also any other  repeated summand split off.

\vskip 2mm We would like to know whether the rank $c$ ACM (resp. Ulrich)
sheaves on $X$ constructed in the Theorem \ref{normalACM} are indecomposable.
We will see in the next section that under some weak conditions ${\mathcal
  N}_X$ and ${\mathcal H}om (\mathcal{I}_X/\mathcal{I}_X^2,
\widetilde{S_{i}M})$, $0\le i \le c-1$ are in fact indecomposable (see
Theorems \ref{simpleNormal} and \ref{indecomposableNormal}).

\vskip 2mm Now we consider and discuss a Conjecture which generalizes the main
result of this section. It is based on a series of examples computed with
Macaulay2 (\cite{Mac2}):

\begin{conj}\label{conjetura} Let $X\subset \PP^n$ be a standard determinantal scheme of
codimension $c\ge 2$ associated to a $t\times (t+c-1)$ matrix $\cA$. Set $I=I_t(\cA)$, $J=I_{t-1}(\cA)$ and assume $\depth _JR/I\ge 2$. With the above notation,  we conjecture that for all integer $i$, $0\le i \le c$,
$\Ext^{i}_R(S_{i}M,S_{c-i}M)$
is an (indecomposable) Maximal Cohen-Macaulay $R/I$-module of rank ${c\choose i}$.  In addition, if $X$ is a linear determinantal scheme then
$\Ext^{i}_R(S_{i}M,S_{c-i}M)$ is an (indecomposable) Ulrich $R/I$-module of rank ${c\choose i}$.
\end{conj}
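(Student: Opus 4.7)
The plan is to generalize the approach of Theorem \ref{normalACM} and Proposition \ref{ext-norm} from $i=1$ to every $0\le i\le c$. The first step would be to establish the conormal reformulation
\begin{equation*}
\Ext^i_R(S_iM,\,S_{c-i}M)\;\cong\;\Hom_A\bigl(\wedge^i_A(I/I^2),\,S_{c-2i}M\bigr),
\end{equation*}
which for $i=1$ is exactly Proposition \ref{ext-norm}(1); when $c-2i<0$ one reads $S_{c-2i}M$ through the canonical duality with $K_A\cong S_{c-1}M$ of Proposition \ref{resol}(iii). The natural tool is the change-of-rings spectral sequence
\begin{equation*}
E_2^{p,q}=\Ext^p_A\bigl(\Tor_q^R(S_iM,A),\,S_{c-i}M\bigr)\;\Longrightarrow\;\Ext^{p+q}_R(S_iM,\,S_{c-i}M),
\end{equation*}
combined with the local-complete-intersection identification $\Tor_q^R(S_iM,A)|_U\cong S_iM\otimes_A\wedge^q(I/I^2)|_U$ on $U=X\setminus V(J)$, the vanishings $\Ext^{\ge 1}_A(S_jM,S_kM)=0$ in the MCM range (analogues of \eqref{ext(M,SiM)}), and the depth-comparison \eqref{NM} to promote smooth-locus equalities to global ones once $\depth_J A$ is sufficiently large.

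Granting this isomorphism, a free $R$-resolution of the right-hand side would be built by iterated application of Lemma \ref{technicallemma}. The inputs are a resolution of $\wedge^i_A(I/I^2)$, obtained either from the Akin--Buchsbaum--Weyman Schur complexes or by splicing $i$ copies of the short exact sequence \eqref{Normalcodim2} underlying the $c=2$ case, together with a resolution of $S_{c-2i}M$, which is $\cD_{|c-2i|}(\varphi)$ (or its canonical dual) by Proposition \ref{resol}. The mapping-cone procedure then produces an $R$-free resolution of $\Hom_A(\wedge^i(I/I^2),\,S_{c-2i}M)$; the cancelations of repeated summands, analogous to those extracted in the proofs of Theorem \ref{normalACM} and Corollary \ref{res}, should trim its length to exactly $c$, yielding MCMness via Auslander--Buchsbaum. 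The rank equality follows because $\wedge^i(I/I^2)|_U$ is locally free of rank $\binom{c}{i}$ while $S_{c-2i}M|_U$ is invertible.

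The main obstacle will be the combinatorial bookkeeping inside the iterated mapping cone. Remark \ref{Ext1(M,SiM)} already flags how delicate the splice diagrams (involving $\epsilon_0^{\ast}$ and $\epsilon_1^{\ast}$) are for $i=1$; for general $i$ one faces $i$ interlocking layers of splices, and verifying the commutativity/homotopy identities \eqref{ellcomp} for the requisite $\sigma_\bullet$ and $\ell_\bullet$ will presumably require Schur-functor identities together with the duality $\cD_j(\varphi)\leftrightarrow\cD_{c-1-j}(\varphi)$ induced by $K_A\cong S_{c-1}M$. In the linear case $a_j=1,\,b_s=0$, once the resolution is explicit one computes the first non-zero graded piece of $\Ext^i_R(S_iM,S_{c-i}M)$ and checks that its $K$-dimension equals $\binom{c}{i}\binom{t+c-1}{c}=\binom{c}{i}\deg(X)$; Theorem \ref{equivconditionsulrich} then upgrades the ACM structure to Ulrich via a pure linear resolution.

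For indecomposability and simplicity, the argument would mirror Theorem \ref{simpleNormal}: the length-$c$ resolution supplies enough depth to apply \eqref{NM} and reduce the computation of $\End_A\bigl(\Ext^i_R(S_iM,S_{c-i}M)\bigr)$ to endomorphisms of the restriction to $U$, which by the conormal identification is a twist of $\wedge^i{\mathcal N}_X$. Simplicity of the latter on $U$---an infinitesimal argument parallel to the one developed in Section~4---would then force $\End_A=K$.
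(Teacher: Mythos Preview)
The statement you are addressing is \emph{Conjecture}~\ref{conjetura}: the paper does not prove it. What the paper does establish are partial results in its direction---Theorem~\ref{normalACM} (the case $i=1$), Proposition~\ref{equivconj}, Lemma~\ref{pd}, and Theorem~\ref{newthm}---and then explicitly records the general statement as open, restating it (under a stronger depth hypothesis) as Conjecture~\ref{conjetura2} and noting that only $a=0$ (via \cite{BHU}) and $a=1$ are known. So there is no ``paper's own proof'' to compare against; your proposal is a strategy for a problem the authors leave unresolved.

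Your outline tracks the paper's partial results closely, but it has two genuine gaps. First, the conormal reformulation $\Ext^i_R(S_iM,S_{c-i}M)\cong\Hom_A(\wedge^i(I/I^2),S_{c-2i}M)$ is exactly Theorem~\ref{newthm}, and that theorem requires $\depth_JA\ge 2i+2$ (or $2i+1$ in special cases), not the $\depth_JA\ge 2$ assumed in the conjecture. The spectral-sequence argument you invoke needs this stronger depth to kill the $E_2^{p,q}$ terms with $p>0$ (see Lemma~\ref{key} and the proof of Theorem~\ref{newthm}); under the bare hypothesis $\depth_JA\ge 2$ you cannot even get started for $i\ge 1$, let alone for all $i\le c$.

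Second, and more seriously, the step ``cancelations of repeated summands \dots\ should trim its length to exactly $c$'' is the entire content of the conjecture and is not justified by anything you write. The paper's Lemma~\ref{pd} carries out precisely your iterated mapping-cone/horseshoe construction and obtains only $\pd\Ext^i_R(S_iM,S_{c-i}M)\le c+2i$ (improved to $c+2i-1$ in the special cases of Remark~\ref{r=s}); the drop from $c+2i$ to $c$ is what would make the module MCM, and the paper could achieve it only for $i=1$ via the explicit maps $\sigma_\bullet,\ell_\bullet$ constructed by hand in the proof of Theorem~\ref{normalACM}. Your appeal to ``Schur-functor identities'' and iterated splices is a hope, not an argument: you would need to exhibit, for each $i$, explicit homotopies forcing $2i$ layers of cancellation, and nothing in the proposal indicates how to produce them. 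The authors' own Remark~\ref{Ext1(M,SiM)} signals that already for $i=1$ with $S_{c-1}M$ replaced by $S_jM$, $j\ne c-1$, the required splittings are unclear.
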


Our next goal will be to  use the  spectral sequence
\begin{equation}\label{speqextSM}E_2^{p,q}=\Ext ^p_A(\Tor_q ^R(S_rM,A),S_{s}M)\Rightarrow \Ext
^{p+q}_R(S_rM,S_{s}M) \ \ \text{ for } -1 \le s\le c,
\end{equation}
 to get some non-obvious
isomorphic variations of the above Conjecture. Indeed under some natural $\depth _J$-conditions we will see that the spectral sequence degenerates and that we get 
\begin{equation}\label{ext(SrM,SsM)} \Ext^{a}_R(S_rM,S_sM)\cong \Hom_A(\wedge ^a(I/I^2),S_{s-r}M) \ \text{ for } \ 0\le a\le r-s+c.
\end{equation}

Firstly, we remark that since $T:=\Spec(A)\setminus V(J)$ is locally a complete intersection, then the ${\cO}_T$-modules
\begin{equation}\label{speqtorSM}
\widetilde{\Tor}_q ^R(\widetilde{S_rM},\cO _T)\cong \widetilde{\Tor}_q
^R(\cO _T,\cO _T)\otimes _ {\cO _T}\widetilde{S_rM} \ \ \text{ and }$$
$$\widetilde{\Tor}_q ^R(\cO _T,\cO _T)\cong \wedge ^q({\mathcal I }/{\mathcal I }^2)
\end{equation}
are locally free on $T$, whence if $\depth _J S_sM\ge i+2$ then (\ref{NM})
implies
\begin{equation}\label{SrM,SsM} \begin{array}{ccl}
    E_2^{p,q} = \Ext ^p_A(\Tor_q ^R(S_rM,A),S_{s}M) & \cong & \Hl^p(T,{\mathcal H}om(\wedge ^q({\mathcal I}/{\mathcal I}^2) \otimes \widetilde{S_rM},\widetilde{S_sM})) \\
    & \cong & \Hl^p(T,{\mathcal H}om(\wedge ^q({\mathcal I}/{\mathcal I}^2),\widetilde{S_{s-r}M})), \text{ for } p\le i.\end{array}
\end{equation}

\begin{lemma}\label{key} Let  $r$, $s$ and $a \ge 1$ be integers satisfying 
  $-1 \le s \le c$ and assume \begin{equation}\label{depthHom}
    \depth _J \Hom_A(\wedge ^q(I/I^2),S_{s-r}M)\ge a+3-q \ \text{ for all } \
    0\le q<a \, .
\end{equation}
Then for $j=r$, and more generally for every integer $j$  such that $r-s-1 \le j \le r-s+c$ we get
 $$ 
\Ext^{a}_R(S_jM,S_{s-r+j}M)\cong
\Hom_A(\wedge ^{a}(I/I^2),S_{s-r}M)\, .$$
\end{lemma}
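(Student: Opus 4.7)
The plan is to extract the isomorphism from the change-of-rings Grothendieck spectral sequence
\[
E_2^{p,q} = \Ext^p_A(\Tor_q^R(S_rM, A), S_sM) \Rightarrow \Ext^{p+q}_R(S_rM, S_sM),
\]
by showing that the only surviving contribution to the diagonal $p+q = a$ comes from the corner $E_2^{0,a}$, and that this corner is naturally isomorphic to $\Hom_A(\wedge^{a}(I/I^2),S_{s-r}M)$.

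First I would identify the edge term $E_2^{0,q}$ for every $q \ge 0$. Since $T = \Spec(A) \setminus V(J)$ is a local complete intersection in $\PP^n$, $\widetilde{M}|_T$ is invertible and $\wedge^q(\mathcal{I}/\mathcal{I}^2)|_T$ is locally free; combining the local identification $\widetilde{\Tor_q^R(S_rM, A)}|_T \cong \widetilde{S_rM}|_T \otimes \wedge^q(\mathcal{I}/\mathcal{I}^2)|_T$ with tensor-hom adjunction gives a canonical isomorphism of sheaves
\[
\mathcal{H}om\bigl(\widetilde{\Tor_q^R(S_rM,A)}, \widetilde{S_sM}\bigr)\big|_T \cong \mathcal{H}om\bigl(\wedge^q(\mathcal{I}/\mathcal{I}^2), \widetilde{S_{s-r}M}\bigr)\big|_T.
\]
Applying the comparison \eqref{NM} on both sides---once to the pair $(\Tor_q^R(S_rM,A), S_sM)$ using that $S_sM$ is MCM for $-1 \le s \le c$, and once in reverse to $(\wedge^q(I/I^2), S_{s-r}M)$ using the bound $\depth_J S_{s-r}M \ge a+3$ which is the $q=0$ instance of \eqref{depthHom}---then yields $E_2^{0,q} \cong \Hom_A(\wedge^q(I/I^2), S_{s-r}M)$.

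Next I would establish the $E_2$-vanishings. Set $N_q := \Hom_A(\wedge^q(I/I^2), S_{s-r}M)$; commuting Hom with localization on $T$ shows that $\widetilde{N_q}|_T$ agrees with the right-hand sheaf in the display above. A further application of \eqref{NM} then gives $E_2^{p,q} \cong H^p_*(T, \widetilde{N_q})$ for $p$ in the range permitted by $\depth_J S_sM$, which reaches $p \le a+1$ by the MCM property of $S_sM$ combined with the depth bound on $A$ implicit in the $q=0$ case of \eqref{depthHom}. The standard local cohomology exact sequence converts this further into $H^p_*(T, \widetilde{N_q}) \cong H^{p+1}_J(N_q)$ for $p \ge 1$, and \eqref{depthHom} forces this to vanish as soon as $p \le a+1-q$. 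I therefore obtain $E_2^{p,q} = 0$ for all $(p,q)$ with $0 \le q \le a-1$ and $1 \le p \le a+1-q$.

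These vanishings kill every non-edge entry on the diagonal $p+q = a$ (via $p = a - q$) together with every $E_2$-target of an outgoing differential $d_r : E_r^{0,a} \to E_r^{r, a-r+1}$ with $r \ge 2$ (via $p = a+1-q$). Consequently the edge morphism $\Ext^a_R(S_rM, S_sM) \to E_\infty^{0,a} = E_2^{0,a}$ is an isomorphism, and combining with the identification of the edge term gives the lemma. The main obstacle is the precise calibration of depth: the shape $a+3-q$ in \eqref{depthHom} is exactly what survives after one unit is absorbed by the local cohomology shift $H^{p+1}_J \leftrightarrow H^p_*(T, -)$ and a second by the comparison \eqref{NM}, so that the vanishing range reaches $p+q \le a+1$---the minimum needed to control both the diagonal and the differentials leaving its corner.
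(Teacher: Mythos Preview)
Your argument is correct and follows essentially the same route as the paper: both proofs run the Grothendieck spectral sequence $E_2^{p,q}=\Ext^p_A(\Tor_q^R(S_rM,A),S_sM)\Rightarrow\Ext^{p+q}_R(S_rM,S_sM)$, identify its terms with sheaf cohomology on $T=\Spec(A)\setminus V(J)$ via \eqref{NM}, use \eqref{depthHom} to kill the terms with $0\le q<a$ on the diagonals $p+q=a$ and $p+q=a+1$, and conclude that the edge map to $E_2^{0,a}\cong\Hom_A(\wedge^a(I/I^2),S_{s-r}M)$ is an isomorphism. The only cosmetic difference is that you package the vanishing as a single range $1\le p\le a+1-q$ rather than treating the two diagonals separately.
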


\begin{remark}\rm The inequality
  (\ref{depthHom}) for $q=0$ just means $\depth _J S_{s-r}M\ge a+3$, whence
  $\depth _JA\ge a+3$. It follows that \eqref{SrM,SsM} holds for $p \le a+1$
  provided $-1 \le s \le c$.
\end{remark}

\begin{proof} Using \eqref{SrM,SsM} and the hypothesis (\ref{depthHom}) which
  yields
\begin{equation}\label{aux9} \Hl^{a+1-q}(T,{\mathcal H}om(\wedge ^q({\mathcal
    I}/{\mathcal I}^2),\widetilde{S_{s-r}M}))=\Hl_J^{a+2-q}(\Hom(\wedge ^q(I/
  I^2),S_{s-r}M))=0 \, ,\end{equation} we get  $E_2^{p,q}: =\Ext ^p_A(\Tor_q
^R(S_rM,A),S_{s}M)=0 \ \text{ for } \ p+q=a+1, \ 0\le q<a$. In the same way
\begin{equation*} \Hl^{a-q}(T,{\mathcal H}om(\wedge ^q({\mathcal I}/{\mathcal
    I}^2),\widetilde{S_{s-r}M}))=0 \end{equation*} implies that $E_2^{p,q} =0 \
\text{ for } \ p+q=a, \ 0\le q<a.$ Hence all terms $E_2^{p,q}$ with $p+q=a$
vanish except $E_2^{0,a}$and we get
$$\Ext^{a}_R(S_rM,S_sM)\cong E_{\infty}^{0,a}.$$
We claim that (\ref{aux9}) implies $E_{\infty}^{0,a}\cong E_2^{0,a}$. Indeed, by (\ref{aux9}),   $E_{\mu }^{p,q}=0$ for any $\mu \ge 2$ and $(p,q)$ satisfying $p+q=a+1$, $0\le q<a$.
Therefore, the differentials of the spectral sequence
$$d_{\mu, 1-\mu }:E^{0,a}_{\mu}\longrightarrow    E^{\mu,a+1-\mu}_{\mu} \ \ , \ \mu \ge 2 $$
vanish for $\mu \ge 2$ because $E^{\mu,a+1-\mu}_{\mu} =0$ for $\mu \ge 2$. It
follows that
\begin{equation}\label{aux101/2}
\Ext^{a}_R(S_rM,S_sM)\cong E_2^{0,a}\cong \Hom_A(\wedge ^{a}(I/I^2),S_{s-r}M)
\end{equation}
where the isomorphism to the right follows from $\depth _J S_{s-r}M \ge 2$.
Finally since the arguments above apply similarly to the spectral sequence
$'E_2^{p,q}: =\Ext ^p_A(\Tor_q ^R(S_jM,A),S_{s-r+j}M)$ as they did for
$E_2^{p,q}$ we are done.
 \end{proof}

 \begin{example} \rm Suppose $\depth _JA\ge 5$ and take $a=r=2$. Then
   (\ref{depthHom}) is satisfied for $s=2$; the case $q=0$ by hypothesis and
   the case $q=1$ follows from \cite{K2011}; Theorem 5.11. Therefore, we get
\begin{equation}\label{aux11} \Ext^2_R(S_2M,S_2M)\cong \Hom_A(\wedge
  ^2I/I^2,A).\end{equation}
If $s=c$ then (\ref{depthHom})
is satisfied; the case $q=1$ follows from
Theorem \ref{normalACM} and Proposition \ref{ext-norm}, whence
\begin{equation*} \Ext^2_R(S_2M,S_cM)\cong \Hom_A(\wedge
  ^2I/I^2,S_{c-2}M).\end{equation*}
\end{example}

\begin{lemma}\label{pd}  Let $r, s$ and $k$ be integers such that 
  $r-1\le s\le r-1+c$ and $1\le k \le \min \{r-s+c,c\}$, and suppose
  $\depth_JA\ge 2$. Moreover for every $i, j$ satisfying $1 \le i < k$, $i \le
  j \le k$ we assume that $$\Ext^{i}_R(S_kM,S_{s-r+k}M)\cong
  \Ext^{i}_R(S_jM,S_{s-r+j}M) \,.$$ Then we have 
\begin{equation}\label{pdExt} \pd \Ext^k_R(S_kM,S_{s-r+k}M)\le
    \pd A+2k \,. 
\end{equation}
\end{lemma}
\begin{proof} We proceed by induction on $k$. 
The case $k=1$ follows from
  Corollary \ref{res}. To prove it for $k > 1$, we will use that 
  $$\Ext^{i}_R(S_{i}M,S_{s-r+i}M)\cong \Ext^{i}_R(S_{k}M,S_{\sigma}M) \,, \ \   \sigma:=s-r+k \,,$$ for $i < k $ by assumption. By Proposition \ref{resol}, we have a resolution of length $c$ of $S_kM$ of the following type:
 $$\rightarrow L_{k+1} \stackrel{\epsilon }{\rightarrow }  L_k=\wedge ^kF \rightarrow L_{k-1}=G\otimes \wedge^{k-1}F \rightarrow \cdots \rightarrow L_1=S_{k-1}G\otimes F\rightarrow L_0=S_kG\rightarrow S_kM\rightarrow 0$$
 where $\epsilon $ is the "splice map". Applying the contravariant functor
 $\Hom_R(-,S_{\sigma}M)$, we get a complex
  \begin{equation}\label{complex}   0\rightarrow
    S_{\sigma}M\otimes L^*_0 \stackrel{\delta _0 }{\rightarrow }
    S_{\sigma}M\otimes L^*_1 \stackrel{\delta _1 }{\rightarrow }\cdots \rightarrow
    S_{\sigma}M\otimes L^*_{k-1} \stackrel{\delta _{k-1} }{\rightarrow }
    S_{\sigma}M\otimes L^*_{k} \stackrel{\epsilon ^*}{\rightarrow }
    S_{\sigma}M\otimes L^*_{k+1} \end{equation}
  where $\ker (\delta _0)=\Hom(S_kM,S_{\sigma}M)\cong S_{{\sigma}-k}M$, $\epsilon ^*=\Hom(\epsilon, S_{\sigma}M)=0$,
  whence         $$\Ext^k_R(S_kM,S_{\sigma}M)=S_{\sigma}M\otimes L^*_k/\im(\delta _{k-1}).$$
  Due to the horseshoe lemma and the exact sequence
  $$0\longrightarrow \im(\delta _{i-1}) \longrightarrow \ker(\delta _{i})
  \longrightarrow \Ext^{i}_R(S_kM,S_{\sigma}M)\longrightarrow 0$$
  we get the implication
  $$\pd \im(\delta _{i-1})\le \pd A+2i-1 \ \Rightarrow \  \pd \ker(\delta _i)\le \pd A+2i$$
  by the induction hypothesis. The exact sequence
   $$0\longrightarrow \ker(\delta _{i}) \longrightarrow S_{\sigma}M\otimes L_i^*
   \longrightarrow \im(\delta _{i})\longrightarrow 0$$   shows the implication
  $$\pd  \ker(\delta _{i})\le \pd A+2i \ \Rightarrow \  \pd \im(\delta _i)\le \pd
  A+2i+1 \ \text{ for } \ 1\le i < k.$$ Since by (\ref{complex}) and
  $\ker(\delta _0)\cong S_{{\sigma}-k}M$ we have
 $$\pd\im (\delta _0)\le pd A+1 \,,$$ we conclude that $\pd\im (\delta_{k-1})\le \pd A+2k-1$ and
 using the exact sequence
 $$0       \longrightarrow \im(\delta _{k-1})   \longrightarrow S_k(M)\otimes L_k^*\longrightarrow
 \Ext^k_R(S_kM,S_{\sigma}M)\longrightarrow 0$$
 we get    $$\pd \Ext^k_R(S_kM,S_{\sigma}M)\le \pd A+2k$$ which proves the lemma.
 \end{proof}
 \begin{remark} \label{r=s} \rm If $s \in \{r-1,r,r+c-2\}$, $s \le c$,
   we have $\pd \Ext^k_R(S_kM,S_{s-r+k}M) \le \pd A+1$ for $k=1$ by Remark
   \ref{Ext1(M,SiM)}. Using the proof above, we show $\pd\ker (\delta _1)\le
   pd A+1$, whence we can improve upon the conclusion for such $s$ and we get $$\pd
   \Ext^k_R(S_kM,S_{s-r+k}M)\le \pd A+2k-1  \ \ \text{ for } 1\le k \le \min \{r-s+c,c\}.$$
 \end{remark}

 \begin{theorem}\label{newthm} Let $X\subset \PP^n$ be a standard
   determinantal scheme of codimension $c\ge 2$ associated to a $t\times
   (t+c-1)$ matrix $\cA$. Let $I=I_{t}(\cA)$, $J=I_{t-1}(\cA)$ and $A=R/I$.
   Fix integers \ $s \ge -1$, $r-1\le s\le c$,
   and assume $\depth _JA\ge 2a+2$ (resp. $\depth _JA\ge 2a+1$ if $s \in
   \{r-1,r,r+c-2\}$ and $a > 1$). Then, for $0\le a\le r-s+c$, we have $$\pd_R
   \Ext^a_R(S_rM,S_sM)\le \pd _RA+2a \ , $$
$$\Ext^a_R(S_rM,S_sM)\cong \Hom_A(\wedge ^a(I/I^2),S_{s-r}M) \ , \ \text{ and }$$
$$  \Ext^a_R(S_jM,S_{s-r+j}M) \cong \Hom_A(\wedge ^a(I/I^2),S_{s-r}M)  \ {\rm for\ }
r-s-1 \le j \le \min \{r-s+c,c\} \,.$$
\end{theorem}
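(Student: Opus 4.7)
The plan is strong induction on $a$, using Lemma~\ref{key} as the engine. The role of the hypothesis $\depth_J A \ge 2a + 2$ is precisely to supply, at each inductive step, the depth bound that Lemma~\ref{key} demands as input.

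The base case $a = 0$ reduces to $\Hom_R(S_rM, S_sM) \cong S_{s-r}M = \Hom_A(\wedge^{0}(I/I^2), S_{s-r}M)$, which follows from the argument in \eqref{Hom(M,SiM)} together with \eqref{NM}, since $\widetilde{M}$ is locally free of rank one on $T = X \setminus V(J)$ and $S_{s-r}M$ is MCM for $-1 \le s-r \le c$. The case $a = 1$ is then contained in Remark~\ref{lastrem}.

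For the inductive step with $a \ge 2$, I will verify the hypothesis of Lemma~\ref{key}, namely $\depth_J \Hom_A(\wedge^q(I/I^2), S_{s-r}M) \ge a + 3 - q$ for $0 \le q < a$. When $q = 0$ the module is $S_{s-r}M$ with $\depth_J \ge \depth_J A \ge 2a+2 \ge a+3$. For $1 \le q < a$, the inductive hypothesis supplies
\[
\Hom_A(\wedge^q(I/I^2), S_{s-r}M) \cong \Ext^q_R(S_rM, S_sM),
\]
and Proposition~\ref{equivconj} (whose hypothesis $\depth_J A \ge q+2$ is implied by $\depth_J A \ge 2a+2$) identifies this further with $\Ext^q_R(S_qM, S_{q+s-r}M)$. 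The shift is legal because $r-1 \le s \le c$ together with $a \le r-s+c$ force $q-1 \le q+s-r \le c$. Lemma~\ref{pd} then yields $\pd_R \Ext^q_R(S_qM, S_{q+s-r}M) \le \pd A + 2q = c + 2q$.

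The bridge from this projective-dimension bound to a $\depth_J$-bound is the step I expect to be the main technical point. It follows from the classical inequality $\depth_J N \ge \depth_J R - \pd_R N$, valid for any finitely generated $R$-module $N$ of finite projective dimension (obtained by iterating the depth lemma along a minimal free resolution of $N$). Since $V(J) \subseteq V(I)$ and $A$ is Cohen--Macaulay, we have $\depth_J R = \depth_J A + c$, so
\[
\depth_J \Ext^q_R(S_qM, S_{q+s-r}M) \ge (\depth_J A + c) - (c + 2q) = \depth_J A - 2q \ge 2a + 2 - 2q,
\]
which for $q \le a - 1$ is $\ge a + 3 - q$, exactly what Lemma~\ref{key} needs. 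Lemma~\ref{key} then produces the desired isomorphism, closing the induction. The sharper hypothesis $\depth_J A \ge 2a + 1$ in the cases $s \in \{r-1, r, r+c-2\}$ is obtained by substituting the improved bound $\pd A + 2q - 1$ from Remark~\ref{r=s} into the estimate above. Beyond the depth-inequality input, the remaining work is bookkeeping: verifying that the three separate hypotheses of Proposition~\ref{equivconj}, Lemma~\ref{pd}, and Lemma~\ref{key} all remain satisfied simultaneously as one runs the induction and shifts indices.
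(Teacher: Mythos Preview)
Your proposal is correct and follows essentially the same route as the paper: strong induction on $a$, with Lemma~\ref{key} as the driver, Proposition~\ref{equivconj} to shift indices into the range where Lemma~\ref{pd} applies, and the projective-dimension bound converted into the required $\depth_J$-bound; the refinement for $s\in\{r-1,r,r+c-2\}$ via Remark~\ref{r=s} is also the paper's argument. The only cosmetic difference is that the paper phrases the passage from $\pd_R$ to $\depth_J$ as ``Auslander--Buchsbaum's formula'' (implicitly combined with the inequality $\depth_J N \ge \depth_J A - (\dim A - \depth_{\mathfrak m} N)$), whereas you use $\depth_J N \ge \depth_J R - \pd_R N$ together with $\depth_J R = \depth_J A + c$; both yield the same bound $\depth_J A - 2q$.
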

\begin{remark} \rm (i) Since we in general have $\depth _JA\le c+2$ and the
  assumption $\depth _JA\ge 2a+2$ (resp. $\depth _JA\ge 2a+1$) in Theorem
  \ref{newthm}, we get that $a$ satisfies $a \le \frac c 2$ (resp. $a \le \frac {c+1} 2$).

 {\rm (ii)} It is worthwhile to point out that Theorem \ref{newthm}
  with $r=s=0$ holds more generally in the licci case (see \cite{KP};
  Proposition 13 and Remark 14).
\end{remark}

\begin{proof} 
  Let $q$ be an integer such that $0\le q \le a$ where $ a \le r-s+c$ (whence
  $s-r \le c$). Then we will prove
\begin{equation} \label{depthExtSrSs} \depth\Ext^q_R(S_aM,S_{s-r+a}M)\ge \dim
  A-2q\ ,  \ \ {\rm  and}
\end{equation}
\begin{equation} \label{ExtqSrSs} \Ext^q_R(S_jM,S_{s-r+j}M) \cong \Hom
  _A(\wedge ^q(I/I^2),S_{s-r}M) \ {\rm for\ }
   r-s-1 \le j \le \min \{r-s+c,c\}\, 
\end{equation}
by induction on $q$. Note that \eqref{depthExtSrSs} is equivalent to $\pd \Ext^q_R(S_aM,S_{s-r+a}M) \le \pd A+2q $ by
Auslander-Buchsbaum's formula and that it implies $\depth
_J\Ext^q_R(S_aM,S_{s-r+a}M)\ge \depth _JA-2q $.


Since $S_{s-r}M$ is maximally Cohen-Macaulay, \eqref{depthExtSrSs} and
\eqref{ExtqSrSs} holds for $q=0$. Suppose both formulas hold for every
non-negative $q < k$ for some positive $k \le a$. It follows  that
\begin{equation} \label{depthHomq} \depth _J \Hom _A(\wedge
  ^q(I/I^2),S_{s-r}M)\ge \depth _JA-2q\ge 2k+2-2q\ge k+3-q;
\end{equation}
and applying Lemma \ref{key} (with $k$ instead of $a$) we get \eqref{ExtqSrSs}
for $q=k$. Since we now have \eqref{ExtqSrSs} for $q=k$ and $j \in \{k, a\}$
we get \eqref{depthExtSrSs} for $q=k$ by Lemma \ref{pd}, which completes the
induction. Moreover since we can take $j=r$ in \eqref{ExtqSrSs}, we get the
theorem for $s \notin \{r-1,r,r+c-2\}$.

Finally if $s \in \{r-1,r,r+c-2\}$ we use
Remark \ref{r=s} to improve upon \eqref{depthExtSrSs} and
\eqref{ExtqSrSs} for $q>0$. Using $\depth _JA\ge 2a+1$ we still get
\eqref{depthHomq}, and we are done.
\end{proof}

\begin{corollary} \label{equivconj} Let $s \ge -1$ and $a \ge0$ be integer and
  suppose \ $r-1 \le s \le c$ and $\depth _JA\ge 2a+2$ (resp. $\depth _JA\ge
  2a+1$ if $s \in \{r-1,r,r+c-2\}$ and $a > 1$). Then we have
$$  \Ext^i_R(S_rM,S_sM) \cong  \Ext^i_R(A,S_{s-r}M) \ \ \ {\rm for\ every \ }
   i \le  \min \{a,r-s+c\} \,.$$
 In particular, for
$i\le \min \{a, \frac {c+1} 2\}$, we have
$$\Ext^{i}_R(S_iM,S_{c-i}M)\cong \Ext^{i}_R(A,S_{c-2i}M)\cong\Ext^{i}_R(S_eM,S_{c-2i+e}M) \ \ \text{ for } \ \ 0\le e
\le 2i.$$
\end{corollary}

\begin{proof} Straightforward to verify using the two last formulas of
  Theorem~\ref{newthm}.
\end{proof}

\begin{remark} \label{correctionC14} \rm If we take a general determinantal
  scheme $X=\Proj(A)$ one knows that $\depth _JA = c+2$. Comparing this with
  $\depth _JA \ge 2a+2$ of Corollary~\ref{equivconj} we get the isomorphisms
  of the \ $\Ext^i_R$-groups of the corollary under the assumption $ i \le
  \frac c 2$ (if $s-r \le \frac c 2$). Thus Corollary~\ref{equivconj} corrects
  with a complete proof the previous version on the arXiv, as well as the
  published version in Crelle's journal, whose corresponding result
  (Proposition 3.9) only assumes $ i \le c$ (after renaming letters in
  Proposition 3.9). The problem with the proof there is that a morphism
  between the spectral sequences in the proof of Proposition 3.9 is not
  established. The proof of the Lemma's \ref{key}, \ref{pd} and Theorem
  \ref{newthm} are mainly the same as those in Crelle's journal, and their
  assumptions imply that the spectral sequences degenerate on the $E_2$ level
  (see \eqref{aux101/2}). Note that the assumptions in Lemma~\ref{key}
  (resp.\,Theorem \ref{newthm}) are exactly (resp.\,added $s \ge -1$) as in
  previous versions/Crelle'journal, while we in Lemma \ref{pd} have introduced
  an assumption on certain $\Ext^i_R$-groups which we obviously can replace by
  the assumptions in Theorem \ref{newthm} if $k \le a$. We only know
  counterexamples to Proposition 3.9 when $i = c$, and in fact we expect
  Corollary~\ref{equivconj} to be true for $i \le c-1$ and $X$ general without
  assuming $\depth _JA \ge 2a+2$ (resp. $\ge 2a+1$), see Remark
  \ref{correctConj} for a follow-up. The inaccuracy in Proposition 3.9 implies
  that we had to weaken Remark \ref{lastrem} (Remark 3.10 in previous
  versions) to show this result. Moreover an assumption of generality is
  included in Theorem~\ref{indecomposableNormal}.
\end{remark}

\begin{corollary} \label{vanis} Let $i \ge 1$, $s \ge -1$ and $a$ be integers
  satisfying \ $0\le a \le c -s$.
\begin{itemize}
\item[(i)]
 Suppose $\depth _JA\ge 2a+2+i$ (resp. $\depth _JA \ge
  2a+1+i$ if $s
\in \{-1,0,c-2\}$ and $a > 0$). Then, for $ 1\le j \le i$, we have
$$\Ext^{j}_A(\wedge^{a}(I/I^2),S_{s}M)=0 \  \text{ and } \ \Hl^j_*(X \setminus V(J),(\wedge^{a} \shI _X/\shI _X ^2)^{\vee} \otimes \widetilde{S_{s}M})=0.$$
\item[(ii)]Suppose $\dim A\ge 2a+2+i$ (resp. $\ge 2a+1+i$ if  $s
\in \{-1,0,c-2\}$ and $a > 0$).
  Then we have
$$ \Hl^j_*(X,\shH om(\wedge^{a} (\shI _X/\shI _X ^2),\widetilde{S_{s}M}))=0  \ \ \ {\rm for} \ \ 1\le j \le i .$$
\end{itemize}
\end{corollary}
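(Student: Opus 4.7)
The plan is to convert both vanishing assertions into a single depth estimate on the module
\[
E := \Hom_A(\wedge^a(I/I^2), S_s M),
\]
controlled via Theorem~\ref{newthm} and Lemma~\ref{pd}. Since $a \le c - s$ forces $s + a \le c$, Theorem~\ref{newthm} applied with $r = a$ and with the parameter ``$s$'' there replaced by $s + a$ gives the key identification
\[
E \;\cong\; \Ext^a_R(S_a M,\, S_{s+a} M).
\]

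Lemma~\ref{pd}, together with Remark~\ref{r=s} in the exceptional cases $s \in \{-1,0,c-2\}$ with $a > 0$ (which, after substituting $r = a$, correspond precisely to the set $\{r-1, r, r+c-2\}$), then yields
\[
\pd_R E \;\le\; \pd A + 2a \qquad \bigl(\text{resp.}\ \pd_R E \le \pd A + 2a - 1\bigr).
\]
Via the graded Auslander--Buchsbaum formula and the $J$-depth estimate \eqref{depthExtSrSs} established in the proof of Theorem~\ref{newthm}, these bounds translate into
\[
\depth E \;\ge\; \dim A - 2a \qquad \text{and} \qquad \depth_J E \;\ge\; \depth_J A - 2a,
\]
with the corresponding $+1$ sharpenings in the exceptional case. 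Under the hypotheses of Parts~(i) and~(ii) respectively, both right-hand sides are $\ge i + 2$.

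For Part~(ii), $\wedge^a(I/I^2)$ is finitely presented, so ${\mathcal H}om(\wedge^a(\shI_X/\shI_X^2),\widetilde{S_s M})$ equals the sheafification $\widetilde{E}$; the standard identification $H^j_*(X,\widetilde E) \cong H^{j+1}_{\mathfrak m}(E)$ for $j \ge 1$, combined with $\depth E \ge i + 2$, yields the vanishing for $1 \le j \le i$. For Part~(i), the analogous identification $H^j_*(T,\widetilde E) \cong H^{j+1}_J(E) = 0$ for $1 \le j \le i$, combined with the fact that $\shI_X/\shI_X^2$ is locally free of rank $c$ on $T$ so that $\widetilde{E}|_T = (\wedge^a \shI_X/\shI_X^2)^{\vee} \otimes \widetilde{S_s M}$, gives the second vanishing of (i). For the first vanishing of (i), namely $\Ext^j_A(\wedge^a(I/I^2), S_s M) = 0$, apply \eqref{NM} with $N = \wedge^a(I/I^2)$, $L = S_s M$ and $Z = V(J)$: $\widetilde{N}$ is locally free on the local complete intersection $T$, and $\depth_J S_s M \ge \depth_J A \ge i + 2$ (for $0 \le s \le c$, $S_s M$ is MCM by Proposition~\ref{resol}, while for $s = -1$ the same $J$-depth bound follows by applying the depth lemma to the dual Buchsbaum--Rim sequence for $\Hom_A(M,A)$, combined with the depth bound on $\Ext^1_R(M,A)$ from Corollary~\ref{res}), so \eqref{NM} identifies $\Ext^j_A$ with the cohomology on $T$ just shown to vanish for $1 \le j \le i$.

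The substantive input sits entirely in Theorem~\ref{newthm} and Lemma~\ref{pd}; the only real bookkeeping is to track the exceptional sharpening from Remark~\ref{r=s}, which is what permits the weakening of the depth hypothesis by $1$ when $s \in \{-1,0,c-2\}$ and $a > 0$.
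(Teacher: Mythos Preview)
Your proof is correct and follows essentially the same route as the paper's. The only cosmetic difference is that the paper invokes Theorem~\ref{newthm} with $r=0$ to identify $E\cong\Ext^a_R(A,S_sM)$ and then passes to $\Ext^a_R(S_aM,S_{s+a}M)$ via Proposition~\ref{equivconj} inside the depth estimate \eqref{depthExtSrSs}, whereas you go directly to $r=a$; both feed into Lemma~\ref{pd}/Remark~\ref{r=s} and Auslander--Buchsbaum in the same way. Your treatment of the \eqref{NM} step is in fact more explicit than the paper's (which simply says ``we apply \eqref{NM}'' without isolating the requirement $\depth_J S_sM\ge i+2$), and your separate handling of $s=-1$ is unnecessary only because Proposition~\ref{resol}(i) already gives $S_{-1}M$ a length-$c$ free resolution, so it is MCM and $\depth_J S_{-1}M=\depth_J A$ directly.
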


\begin{proof} (i) Taking $r=0$ in Theorem \ref{newthm}, we get
$$
\Ext^a_R(A,S_{s}M)\cong \Hom _A(\wedge ^a(I/I^2),S_{s}M)$$ and applying Lemma
\ref{pd} together with Auslander-Buchsbaum's formula, as in
\eqref{depthExtSrSs}, we have
\begin{equation} \label{23}
\depth _J \Hom_A(\wedge ^a(I/I^2),S_{s}M)\ge 2+i.
\end{equation}
Since $\wedge ^{a}(\shI _X/\shI _X^2)$ is locally free on $X\setminus
V(J)$, we apply \eqref{NM} and, for $1\le j \le i$, we get
$$\Ext^{j}_A(\wedge^{a}(I/I^2),S_{s}M)=0 \ \text{ and } \ \Hl^j_*(X\setminus
V(J),\shH om(\wedge^{a} (\shI _X/\shI _X ^2),\widetilde{S_{s}M}))=0.$$ Suppose
$s \in \{-1,0,c-2\}$. Then, for $a =1$, we may use Remark \ref{r=s} to improve
upon \eqref{depthExtSrSs} by $1$ and since we still get \eqref{23}, we
conclude by the arguments above. If $a \ge 2$ then Theorem \ref{newthm}
applies, and using Remark \ref{r=s} to improve upon \eqref{depthExtSrSs}, we
get \eqref{23} and we conclude as previously.

(ii) The argument using the projective dimension and Auslander-Buchsbaum's
formula also imply \eqref{23} with $\mathfrak{m}$ instead of $J$, whence we
get $\Hl^{j+1}_{\mathfrak{m}}(\Hom_A(\wedge ^{a}(I/I^2),S_{s}M))=0$ and we are
done.
\end{proof}
The case $a=1$, $i=1$ and $s=-1$ of Corollary \ref{vanis} is of special
interest because it shows the vanishing of a group that we tried very much to
compute in \cite{KM05}. Indeed, we have
\begin{corollary} \label{vaniss} Let $J=I_{t-1}(\cA)$ and $A=R/I$ and suppose
  $\depth _JA\ge 4$. Then
$$\Ext^{1}_A(I/I^2,\Hom(M,A))=0.$$
\end{corollary}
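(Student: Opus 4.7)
The plan is essentially to recognize Corollary \ref{vaniss} as the specialization of Corollary \ref{vanis}(i) to the parameter values $a=1$, $i=1$, $s=-1$, together with the convention for $S_{-1}M$.

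First, I note that under the convention stated in Remark \ref{Ext1(M,SiM)}, we interpret $S_{-1}M$ as $\Hom_A(M,A)$, and of course $\wedge^1(I/I^2)=I/I^2$. Thus the statement to be proved reads exactly
\[
\Ext^{1}_A\bigl(\wedge^{1}(I/I^2),S_{-1}M\bigr)=0,
\]
which is the vanishing of the first summand in the conclusion of Corollary \ref{vanis}(i) for $(a,i,s)=(1,1,-1)$.

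Next, I verify that the improved depth hypothesis of Corollary \ref{vanis}(i) applies. Since $s=-1$ lies in the exceptional set $\{-1,0,c-2\}$ and $a=1>0$, the relevant depth bound is $\depth_{J}A\ge 2a+1+i = 2+1+1 = 4$, which is exactly what we are assuming. (In contrast, the generic bound $\depth_J A\ge 2a+2+i=5$ would be too strong.) Thus, invoking Corollary \ref{vanis}(i) with these parameters yields the desired vanishing $\Ext^{1}_A(I/I^2,\Hom_A(M,A))=0$.

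There is no real obstacle here: the content of the corollary is already packaged inside Corollary \ref{vanis}(i), and the only thing to do is to check the interpretation of $S_{-1}M$ and verify that the case $s=-1$ indeed falls within the sharper depth range. The deeper input traces back through Corollary \ref{vanis}(i) to Theorem \ref{newthm} (with $r=0$, $s=-1$, $a=1$), which gives $\Ext^{1}_R(A,\Hom_A(M,A))\cong\Hom_A(I/I^2,\Hom_A(M,A))$, and to the depth estimate $\depth_J\Hom_A(I/I^2,\Hom_A(M,A))\ge 3$ coming from Remark \ref{r=s} and Auslander--Buchsbaum applied to the free $R$-resolution of $\Ext^1_R(M,A)$ constructed in Corollary \ref{res}. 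Feeding this depth bound into the isomorphism \eqref{NM} then collapses the relevant $\Ext^{1}_A$-group to zero, finishing the argument.
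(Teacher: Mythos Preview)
Your proposal is correct and follows exactly the paper's own approach: the paper introduces Corollary \ref{vaniss} precisely as the case $a=1$, $i=1$, $s=-1$ of Corollary \ref{vanis}(i), and you have verified this specialization together with the improved depth hypothesis for $s\in\{-1,0,c-2\}$ and the convention $S_{-1}M=\Hom_A(M,A)$.
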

In Theorem 5.1 of \cite{KM05} we repeatedly need to apply Corollary
\ref{vaniss} to standard determinantal schemes obtained by deleting at least
one column. Indeed, Corollary \ref{vaniss} shows that all assumptions of
Theorem 5.1 (ii) are satisfied in the case $\dim X \ge 2$, $a_j \ge b_i$ for
any $i,j$. It follows that the closure of the locus of determinantal schemes
$W$ inside the Hilbert scheme, is a generically smooth component of the Hilbert
scheme (if $W \ne \emptyset$)! Thus Corollary \ref{vaniss} partially reproves
Corollary 5.9 of \cite{K2011}. In particular, we also get Conjecture 4.2 of
\cite{KM11} from Theorem 5.1 of \cite{KM05} and Corollary \ref{vaniss}.

Since the case $a=1$ and $s=c-1$ is related to the dual of the conormal module
of $A$, we remark

\begin{corollary} \label{vanis2} Let $J=I_{t-1}(\cA)$, $A=R/I$, $X=\Proj(A)$
  and let $K_A$ be the canonical module of $A$. If $\depth _JA\ge 4+i $, then
  $$\Hl^{k}_{\mathfrak{m}}(I/I^2)=0 \ \ \ {\rm for } \ \ \dim X-i < k \le \dim X,$$ or
  equivalently ;
$$\Ext^{j}_A(I/I^2,K_A)=0 \ \ \ {\rm for} \ \ 1\le j \le i .$$
\end{corollary}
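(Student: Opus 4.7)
The plan is to reduce Corollary \ref{vanis2} directly to Corollary \ref{vanis}(i) specialized to $a=1$ and $s=c-1$, combined with the canonical module identification from Proposition \ref{resol}(iii) and local duality. Since $K_A \cong S_{c-1}M$ up to twist, the vanishing of $\Ext^j_A(I/I^2, K_A)$ is equivalent to the vanishing of $\Ext^j_A(I/I^2, S_{c-1}M)$, and it suffices to prove the latter.

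First I would apply Corollary \ref{vanis}(i) with $a=1$ and $s=c-1$. The key observation is that $s=c-1$ does not belong to the exceptional set $\{-1,0,c-2\}$ for any $c\ge 2$, so the relevant hypothesis is the main one, namely $\depth_J A \ge 2a+2+i = 4+i$. This matches exactly the hypothesis of Corollary \ref{vanis2}, and the conclusion gives
$$\Ext^j_A(I/I^2, S_{c-1}M) = 0 \quad \text{for } 1\le j \le i,$$
and hence the corresponding vanishing with $K_A$ in place of $S_{c-1}M$.

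To pass to the local cohomology formulation, I would invoke local duality on the Cohen-Macaulay graded ring $A$ with canonical module $K_A$. Setting $d=\dim A=\dim X+1$, local duality provides the isomorphism
$$H^k_{\mathfrak{m}}(I/I^2) \cong \Ext^{d-k}_A(I/I^2, K_A)^{\vee}$$
(graded Matlis dual). Consequently, $\Ext^j_A(I/I^2,K_A)=0$ for $1\le j\le i$ translates into $H^k_{\mathfrak{m}}(I/I^2)=0$ for $d-i\le k\le d-1$, i.e.\ for $\dim X-i < k\le \dim X$, which is precisely the statement.

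The argument is essentially bookkeeping: the real work has already been done in Theorem \ref{newthm} and Corollary \ref{vanis}. The only point requiring any care is verifying that $s=c-1$ falls outside the exceptional set for which Corollary \ref{vanis}(i) uses a weaker depth bound, so the bound $\depth_J A\ge 4+i$ truly suffices; there is no genuine obstacle beyond this index check and the standard indexing shift between $\Ext^j$ and $H^{d-j}_{\mathfrak{m}}$ dictated by local duality.
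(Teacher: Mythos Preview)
Your proof is correct and follows exactly the paper's own argument: the paper's proof simply invokes Corollary~\ref{vanis}(i) together with the identification $K_A(v)\cong S_{c-1}M$ from Proposition~\ref{resol}, and then cites Gorenstein (local) duality for the equivalent local-cohomology formulation. Your additional care in checking that $s=c-1\notin\{-1,0,c-2\}$ and in spelling out the index shift under local duality is just an explicit unpacking of what the paper leaves implicit.
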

\begin{proof} This is immediate from Corollary \ref{vanis} (i) since
  $K_A(v)\cong S_{c-1}(M)$ for some integer $v$ by Proposition \ref{resol}
  (ii). For the equivalent statement, we use Gorenstein duality.
\end{proof}

\begin{remark} \label{correctConj} \rm Here we try to extend
  Corollary~\ref{equivconj} to 
the range $\frac c 2 \le i \le c-1$, as well as to show
\begin{equation}  \label{simE}
\Ext^i_R(S_rM,S_sM)\cong \Hom_A(\wedge ^i(I/I^2)\otimes S_{r}M,S_{s}M) \ \ {\rm when} \ \ -1\le s \le c 
\end{equation}
for such $i$. Note that since $S_{s}M$ is maximally Cohen-Macaulay
as an $A$-module, 
the $\Hom$-group has $\depth_J \ge 2$, and it is further isomorphic to $ \Hom_A(\wedge ^i(I/I^2),S_{s-r}M)$ if \ $ -1\le s-r \le c$.

{\rm (i)} Conversely if $N:=\Ext^i_R(S_rM,S_sM)$ satisfies $\depth_JN \ge 2$,
then \eqref{simE} holds. In fact if $T:=\Spec(A)\setminus V(J)$, the depth
assumption implies $N \cong H^0(T, \widetilde N)$. Then \eqref{simE} follows
easily from the local version of the spectral sequence \eqref{speqextSM} which
degenerates due to \eqref{speqtorSM}.

{\rm (ii)} One may express \eqref{simE} using $\Tor^R_{c-i}$-groups. Indeed
note that the resolutions  of $S_kM$ and $S_{c-1-k}M$ given in
Proposition \ref{resol} are $R$-dual to each other (up to twist by $ \ell :=\sum
_{j=1}^{t+c-1}a_j-\sum _{i=1}^tb_i$). Combining
with the fact that the homology groups of \eqref{complex} may be interpreted
as $\Ext^i_R(S_kM,S_{\sigma}M)$ as well as $\Tor^R_{c-i}(L,S_{\sigma}M)$ where
$L$ is determined by the $R$-dual resolution of $S_kM$, we get: 
$$\Ext^i_R(S_kM,S_{\sigma}M) \cong
\Tor^R_{c-i}(S_{c-1-k}M,S_{\sigma}M)(\ell) \ \ {\rm for} \ \ -1 \le
k,\sigma \le c \ .$$  
Indeed  letting $j=c-i$ and $h = c-1-r$ we have $$\Ext^i_R(S_rM,S_sM) \cong \Ext^i_R(S_{r+1}M,S_{s+1}M) \Longleftrightarrow \Tor^R_{j}(S_{h}M,S_{s}M) \cong \Tor^R_{j}(S_{h-1}M,S_{s+1}M) \,.$$
In particular
Corollary~\ref{equivconj} leads to a corresponding result for
$\Tor^R_j$-groups where $\frac c 2 \le j \le c$.

{\rm (iii)} Using {\rm (ii)} and $\Tor^R_{j}(L_1,L_2) \cong
\Tor^R_{j}(L_2,L_1)$ we get for every $i$, $0 \le i \le c$, the isomorphism
\begin{equation} \label{simEc} \Ext^i_R(S_rM,S_sM)\cong
  \Ext^i_R(S_{c-1-s}M,S_{c-1-r}M)\ \ \ {\rm for} \ \ -1 \le r, s \le c .
\end{equation}
{\rm (iv)} Exactly at the spot where the splice map in the complex
\eqref{complex} occurs, we do not only have $\Ext^k_R(S_kM,S_{\sigma}M)$ as a
certain cokernel, but also $\Ext^{k+1}_R(S_kM,S_{\sigma}M)$ as the kernel of
$S_{\sigma}M\otimes L^*_{k+1} \to S_{\sigma}M\otimes L^*_{k+2}$. Since
$H^0_J(-)$ is left exact and $ \depth_J S_{\sigma}M \ge 2$, we get $\depth_J
\Ext^{k+1}_R(S_kM,S_{\sigma}M) \ge 2$ for $k < c$ (for  $k = c$, $\Ext^{k+1}_R(S_kM,S_{\sigma}M)=0$), thus \eqref{simE} holds for
$i=r+1$, $ -1 \le r, s \le c$ by {\rm (i)}. Using also {\rm (ii)} it follows that both groups of \eqref{simEc} for $i=r+1$ 
 are  further isomorphic to 
$$ \Tor^R_{j}(S_{j}M,S_{s}M)(\ell) \cong \Hom_A(\wedge ^{r+1}(I/I^2) \otimes S_{r}M,S_{s}M) \,, \ j=c-1-r \ {\rm for} \ -1 \le r, s \le c \,.$$

{\rm (v)} So the bottom line for \eqref{simE} to hold is to show that
$\depth_J \Ext^i_R(S_rM,S_sM)\ge 2$. Let us point out one more interesting
case where this holds, namely the case $i=c-1= r-s$ (i.e. $(r,s)= (c,1)$,
$(c-1,0)$ or $(c-2,-1)$). Using {\rm (ii)} we have $\Ext_R^{c-1}(S_rM,S_sM)
\cong \Tor^R_1(S_hM,S_sM)(\ell)$ with $h+s=0$. Letting $(h,s)=(1,-1)$ or
$(-1,1)$ we get two $\Tor_1$-groups that obviously are isomorphic and {\rm
  (iv)} applies to get $ \Tor^R_{1}(S_{h}M,S_{s}M)(\ell) \cong \Hom_A(\wedge
^{c-1}(I/I^2) \otimes S_{c-1}M,A)$. If $(h,s)=(0,0)$, the $\Tor_1$-group is
isomorphic to $I/I^2$, the conormal module. Its depth is found in the next
section (Proposition~\ref{depthconormal}), and the assumptions $n \ge 2c$, $A$
general and $a_j>b_i$  suffice for having $\depth_J I/I^2 \ge
2$. Thus for $r-s =c-1$, $n \ge 2c$, $A$ general and $a_j>b_i$ for
all $i,j$,  we conclude that
$$ \Ext^{c-1}_R(S_rM,S_sM) \cong \Hom_A(\wedge ^{c-1}(I/I^2)\otimes S_{c-1}M,A) \cong I/I^2(\ell)\ .$$
\end{remark}

As another application of Theorem \ref{newthm},
we can partially restate Conjecture \ref{conjetura}. Indeed,

\begin{conj} \label{conjetura2}
 Let $X\subset \PP^n$ be a linear standard determinantal scheme of
codimension $c\ge 2$ associated to a $t\times (t+c-1)$ matrix $\cA$. Set
$I=I_t(\cA)$ and $J=I_{t-1}(\cA)$. For any integer $a$, $0 \le a\le
\frac{c+1}{2}$ we conjecture that
$\Hom(\wedge ^{a}(I/I^2),S_{c-2a}M)$ is an (indecomposable) Ulrich
$R/I$-module of rank ${c\choose a}$ provided  $\depth _JA\ge 2a+2$.
\end{conj}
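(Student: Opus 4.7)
To prove Conjecture \ref{conjetura2}, set
\[
E_a := \Hom_A(\wedge^a(I/I^2), S_{c-2a}M).
\]
Applying Theorem \ref{newthm} with $r = 0$ and $s = c-2a$ (for which $r - s + c = 2a \ge a$) identifies
\[
E_a \;\cong\; \Ext^a_R(A, S_{c-2a}M) \;\cong\; \Ext^a_R(S_aM, S_{c-a}M),
\]
the second isomorphism coming from Proposition \ref{equivconj}. The rank assertion $\rk E_a = \binom{c}{a}$ is immediate, since $\widetilde{I_X/I_X^2}$ is locally free of rank $c$ and $\widetilde{S_{c-2a}M}$ is invertible on the lci locus $X \setminus V(J)$.

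The plan for the MCM assertion is induction on $a$, generalizing the strategy of Theorem \ref{normalACM} and Corollary \ref{res}. The base case $a=0$ reduces to the fact that $S_cM$ is MCM by Proposition \ref{resol}(ii) and Ulrich of rank one in the linear case by Proposition 2.8 of \cite{BHU}; the case $a=1$ is Theorem \ref{normalACM}. For the inductive step I would compute $\Ext^a_R(S_aM, S_{c-a}M)$ by applying $\Hom_R(-, S_{c-a}M)$ to the resolution $\cD_a(\varphi)$ of $S_aM$ and splicing the result, via iterated applications of Lemma \ref{technicallemma}, with the known minimal free resolutions of the summands $\wedge^i F \otimes S_{a-i}G$ and of the lower Ext-groups. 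The inductive hypothesis is precisely what makes the splicing feasible: by Theorem \ref{newthm}, $\Ext^i_R(S_aM, S_{c-a}M) \cong \Hom_A(\wedge^i(I/I^2), S_{c-2a}M)$ for $i < a$, and these are MCM with explicit short resolutions. The bound $\pd E_a \le \pd A + 2a$ of Lemma \ref{pd} only gives MCM-ness after $2a$ pairs of free summands split off, and the construction is designed to exhibit these cancellations, mirroring how $\sigma_c = \id$ and $\ell_c = \id$ forced two summands off in the proof of Theorem \ref{normalACM}.

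For the Ulrich conclusion in the linear case ($a_j = 1$, $b_i = 0$) I would invoke Theorem \ref{equivconditionsulrich}: once $E_a$ is known to be MCM, it suffices to check that its minimal $\odi{\PP^n}$-resolution (after the twist making it initialized) is pure linear of length $c$, and that the number of minimal generators equals $\binom{c}{a}\deg(X) = \binom{c}{a}\binom{t+c-1}{c}$. Purity and linearity follow by tracking internal degrees through the iterated mapping cone: in the linear setting each component $\wedge^i F \otimes S_j G \otimes \wedge^t G^*$ sits in a single shifted degree, so after cancellation the $k$-th free module in the resolution lives in a single degree $k + v_a$. The generator count then reduces to an alternating binomial identity obtained by evaluating the Euler characteristic of the spliced complex at the initial degree, mirroring the count carried out at the end of the proof of Theorem \ref{normalACM} when $a=1$, where $c\binom{t+c-1}{c} = (t+c-1)\binom{t+c-2}{c-1}$ was the key identity.

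The main obstacle is the combinatorial bookkeeping of the iterated mapping cone: one has to construct a whole family of chain maps $\sigma_\bullet^{(a)}$, $\ell_\bullet^{(a)}$ compatible with the splice maps $\epsilon_j^*$ and the derivations $\partial_q^p$ of \cite{eise}, and then verify that exactly the right $2a$ pairs of repeated free summands cancel. Without a conceptual substitute---for instance a Koszul-type duality within the Eagon--Northcott/Buchsbaum--Rim family identifying $E_a$ with a natural subquotient of $\wedge^a F^* \otimes S_{c-2a}M$---the argument threatens to become a heavy diagrammatic induction. Indecomposability would finally be addressed by computing $\End_A(E_a)$ via \eqref{NM} applied over the smooth locus, exactly in the spirit of Theorem \ref{simpleNormal}.
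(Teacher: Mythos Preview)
The statement you are attempting to prove is a \emph{conjecture} in the paper, not a theorem: the paper does not claim to prove it for general $a$, only for $a=0$ (citing \cite{BHU}) and $a=1$ (via Theorem~\ref{normalACM}). Immediately after stating Conjecture~\ref{conjetura2} the authors explain that it is equivalent to Conjecture~\ref{conjetura} under the stated depth hypothesis, and explicitly leave the cases $a\ge 2$ open, supported only by Macaulay2 computations.

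Your proposal is therefore not being compared against a proof in the paper---there is none---and what you have written is a plan rather than a proof. The genuine gap is exactly the one you name yourself: Lemma~\ref{pd} gives only $\pd E_a \le c + 2a$, whereas MCM requires $\pd E_a = c$, so you must force $2a$ cancellations in the iterated mapping cone. For $a=1$ those two cancellations came from the explicit identifications $\sigma_c = \id$ and $\ell_c = \id$ in the proof of Theorem~\ref{normalACM}, which required a delicate hands-on verification of commutativity involving the splice maps $\epsilon_j^*$. You assert that analogous maps $\sigma_\bullet^{(a)}$, $\ell_\bullet^{(a)}$ exist and produce the required cancellations, but you neither construct them nor indicate why the splitting should persist inductively; your own closing paragraph concedes that without a conceptual substitute ``the argument threatens to become a heavy diagrammatic induction.'' That is precisely why the paper states this as a conjecture. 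The Ulrich and indecomposability parts of your outline are likewise contingent on first establishing MCM-ness, so they inherit the same gap.
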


Notice that under the above hypothesis Conjectures \ref{conjetura} and
\ref{conjetura2} are equivalent since, by Theorem \ref{newthm}, we have
$$\Ext^a_R(S_aM,S_{c-a}M)\cong\Hom_A(\wedge ^a(I/I^2),S_{c-2a}M) \ \text{ for } 1\le a \le \frac{c+1}{2}.$$

We want to point out that for $a=0$, the conjecture was proved in
\cite{BHU}; Proposition 2.8 and in this section, we prove
it for $a=1$ (and, indecomposability for $n\ge 2c+1$ in  the next section).

\section{The indecomposability of the normal sheaf of a determinantal variety}

In this section we address Problem \ref{pblm}(3) and we determine conditions
under which the normal sheaf $ {\mathcal N}_X$, and more generally the
 ``twisted'' normal sheaves ${\mathcal N}_X(\mathcal M^i):= {\mathcal H}om
_{\odi{X}}(\mathcal{I}_X/\mathcal{I}_X^2, \widetilde{S_{i}M})$, $-1 \le i \le
c-1$, of a standard determinantal scheme $X\subset \PP^n$ of codimension $c$ are
simple (i.e. $\Hom( {\mathcal N}_X(\mathcal M^i),{\mathcal N}_X(\mathcal
M^i))\cong K$) and, hence, indecomposable (cf. Theorems \ref{simpleNormal} and
\ref{indecomposableNormal}). Obviously ${\mathcal N}_X(\mathcal M^0)=
{\mathcal N}_X$. We keep the notation of previous sections. So, $\cA $ will be
the $t \times (t+c-1)$ matrix associated to the standard determinantal scheme
$X\subset \PP^n$, $I=I_t(\cA)$, $A=R/I$ and
\begin{equation} \label{eagonnorthcott}
\cdots \longrightarrow \oplus _jR(-n_{2j})\longrightarrow \oplus _{i}R(-n_{1i})
\longrightarrow R\longrightarrow A\longrightarrow 0
\end{equation}
the Eagon-Northcott resolution of $A$.  In this section we often have
$d_{i,j}:=a_j-b_i > 0$ for all $i,j$.

Again we will assume $c>1$, since the case $c=1$ corresponds to a hypersurface
$X\subset \PP^n$ and ${\mathcal N}_X\cong \odi{X}(\delta )$, $\delta
:=\deg(X)$, is simple. We will also assume $t>1$, since the case $t=1$
corresponds to a codimension $c$ complete intersection $X\subset \PP^n$ and
${\mathcal N}_X(\mathcal M^i)$ and ${\mathcal N}_X=\oplus _{i=1}^{c}
\odi{X}(d_i)$, $d_i\in \ZZ$, are neither simple nor indecomposable. Let us
start computing the depth of the conormal bundle of a standard determinantal
scheme. We have:

\begin{proposition} \label{depthconormal}
Let $\cA $ be a $t\times (t+c-1) $ homogeneous matrix with entries that are general forms of
degree $d_{ij}  > 0$, let $I=I_t(\cA)$, $J=I_{t-1}(\cA )$ and $X\subset \PP^n$ be the standard determinantal scheme of codimension $c$
associated to $\cA$. Assume that $n\ge 2c-2$. Then, it holds:
\begin{itemize}
\item[(1)] $\depth  _{\mathfrak{m}} I/I^2=n-2c+2.$
\item[(2)] $\depth _J I/I^2\ge 3$ (resp. $=n-2c+2$) provided $n\ge 2c+1$ (resp. $n\le 2c$).
\end{itemize}
 Therefore, if $n\ge 2c$ there is  a
   closed subset $Z\subset X$ such that $X\setminus Z\hookrightarrow \PP^n$ is a local
   complete intersection and
   $\depth_{I(Z)}I/I^2\ge 2$.
 \end{proposition}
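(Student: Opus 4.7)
The plan is to compute $\pd_R(I/I^2)$ explicitly and deduce (1) from Auslander--Buchsbaum, then handle (2) by a case split on whether $V(J)$ is nonempty in $\PP^n$.

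For the projective dimension, I would start from the identification $I/I^2\cong \Tor_1^R(A,A)$ and use the Eagon--Northcott resolution $\mathcal D_0(\varphi)$ of $A$ of length $c$ from Proposition \ref{resol}. A mapping-cone construction in the spirit of Lemma \ref{technicallemma}, applied to the self-tensor product $\mathcal D_0(\varphi)\otimes_R \mathcal D_0(\varphi)$ viewed as a double complex, shows $\pd_R(I/I^2)\le 2c-1$; genericity of the entries of $\cA$ ensures the bound is sharp since no further cancellations occur in the top syzygies. Auslander--Buchsbaum then gives
$$\depth_{\mathfrak m}(I/I^2)=n+1-(2c-1)=n-2c+2,$$
which is (1).

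For (2), the key observation is that for generic $\cA$ the ideal $J=I_{t-1}(\cA)$ has codimension $\min(2c+2,\,n+1)$ in $R$. When $n+1\le 2c+2$, i.e.\ $n\le 2c+1$, the scheme $V(J)\subset \Spec R$ is concentrated at the origin, so $\sqrt{J}=\mathfrak m$ and hence $\depth_J L=\depth_{\mathfrak m}L$ for every finitely generated $R$-module $L$. Combined with (1) this proves the equality $\depth_J(I/I^2)=n-2c+2$ for $n\le 2c$ and also gives $\depth_J(I/I^2)=3$ at the boundary $n=2c+1$. For $n\ge 2c+2$, the set $V(J)$ has codimension $2c+2\ge 6$ in $\Spec R$ and I would invoke the SGA2 comparison
$$0\to H^0_J(I/I^2)\to I/I^2\to H^0(U,\widetilde{I/I^2})\to H^1_J(I/I^2)\to 0,\qquad H^{i-1}(U,\widetilde{I/I^2})\cong H^i_J(I/I^2)\text{ for }i\ge 2,$$
where $U=\Spec A\setminus V(J)$. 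On $U$, which is a local complete intersection of codimension $c$ in affine space, the sheaf $\widetilde{I/I^2}|_U$ is locally free of rank $c$, and the required vanishings $H^j_J(I/I^2)=0$ for $j=0,1,2$ reduce to sheaf-cohomology statements on $U$ that follow from Corollaries \ref{vanis} and \ref{vaniss} under the hypothesis $\depth_J A\ge 5$, automatically satisfied since the codimension of $V(J)$ is at least $6$.

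The main obstacle is establishing the exact value $\pd_R(I/I^2)=2c-1$ rather than a mere upper bound: the inequality $\le 2c-1$ follows from the mapping-cone construction, but the lower bound requires a genericity argument (or a degeneration to an explicit matrix) to exclude accidental cancellations among the terms of top degree. A secondary difficulty is the precise translation of the sheaf-cohomology vanishings on $U$ into the depth-$3$ statement for the $R$-module $I/I^2$; care must be taken to check that the degree and depth hypotheses of Corollaries \ref{vanis}--\ref{vaniss} are met in the form needed here.
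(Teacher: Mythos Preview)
Your approach diverges from the paper's in both parts, and each part has a genuine gap.

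\textbf{Part (1).} The paper does not compute $\pd_R(I/I^2)$ directly. Instead it first treats the \emph{generic} matrix (entries are indeterminates, so $n=t(t+c-1)-1$), invokes Bruns--Vetter's formula $\depth_{\mathfrak m}\Omega=\depth_{\mathfrak m}R/I-\depth_J R/I+2$ for the module of K\"ahler differentials, and then reads off $\depth_{\mathfrak m}I/I^{(2)}$ from the sequence $0\to I/I^{(2)}\to (R/I)^{t(t+c-1)}\to \Omega\to 0$ together with $I^2=I^{(2)}$. The passage to smaller $n$ is by cutting with general linear forms (linear case) and then raising entries to powers (general degrees). Your proposed route via the double complex $\mathcal D_0(\varphi)\otimes_R\mathcal D_0(\varphi)$ does not produce a free resolution of $\Tor_1^R(A,A)$: the total complex has homology $\Tor_i^R(A,A)$ in \emph{all} degrees $0\le i\le c$, so you cannot simply read off $\pd_R(I/I^2)\le 2c-1$ from its length. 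Even granting the upper bound, you acknowledge the lower bound is unproved; the paper's route avoids this entirely by quoting the differential-module computation.

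\textbf{Part (2).} Your case $n\le 2c+1$ is fine and matches what the paper does implicitly (there $\sqrt J=\mathfrak m$). For $n\ge 2c+2$, however, invoking Corollaries~\ref{vanis} and~\ref{vaniss} is a wrong turn: those results control $\Ext^j_A(\wedge^a(I/I^2),S_sM)$ and $H^j_*\bigl(X\setminus V(J),(\wedge^a\mathcal I_X/\mathcal I_X^2)^\vee\otimes\widetilde{S_sM}\bigr)$, i.e.\ cohomology of the \emph{normal} sheaf and its twists, not the local cohomology $H^j_J(I/I^2)$ of the conormal module that you need. The paper instead uses a one-line depth inequality (Lemma~7 of \cite{K2007}),
\[
\depth_J I/I^2\ \ge\ \depth_J A-\bigl(\dim A-\depth_{\mathfrak m}I/I^2\bigr),
\]
together with $\depth_J A=c+2$ (for $n\ge 2c+1$) and the value of $\depth_{\mathfrak m}I/I^2$ from part~(1), to get $\depth_J I/I^2\ge 3$ immediately. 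No sheaf-cohomology comparison is needed.
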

 \begin{proof} (1)
We will first prove Proposition~\ref{depthconormal} for an ideal generated by the maximal minors of a matrix with entries that are
indeterminates and we will deduce that it also works for a
 homogeneous matrix with entries of general homogeneous
polynomials.

So, we first assume that  $n=t(t+c-1)-1$  and $\cA$ is a matrix with entries
$x_0,x_1, \cdots,x_n$ of indeterminates. Set $R=k[x_0,\cdots ,x_n]$, $I=I_t(\cA )$ and call $\Omega $
the module of differentials of $R/I$ over $k$. By \cite{b-v}; Theorem 14.12
$$   \depth  _{\mathfrak{m}} \Omega=\depth  _{\mathfrak{m}} R/I-\depth _J R/I+2.$$  and note that $\depth _J R/I= \codim (I_{t-1}(\cA),R/I)$. Since, $$ \depth  _{\mathfrak{m}} R/I=\dim R/I=n-c+1, \text{ and} $$
$$\codim  (I_{t-1}(\cA),R/I)=\codim (I_{t-1}(\cA))-\codim (I_t(\cA))=2(c+1)-c=c+2,$$
we get $$\depth  _{\mathfrak{m}} \Omega=(n-c+1)-(c+2)+2=t(t+c-1)-2c.$$
Therefore, using  the exact sequence
$$ 0\longrightarrow   I/I^{(2)} \longrightarrow (R/I)^{t(t+c-1)} \longrightarrow \Omega
\longrightarrow 0,$$  we deduce that
$$ \begin{array}{ccl} \depth  _{\mathfrak{m}} I/I^{(2)} & = &
\depth  _{\mathfrak{m}} \Omega +1  \\
 & = & t(t+c-1)-2c+1 \\
 & = & n-2c+2. \end{array}$$
 By \cite{b-v};  Corollary 9.18, we have $I^2=I^{(2)}$. Therefore, we conclude that
 $$ \depth  _{\mathfrak{m}} I/I^2=n-2c+2.$$

 Let us now assume $n<t(t+c-1)-1.$ We distinguish two cases:
\vskip 2mm
\noindent \underline{Case 1.} Assume $d_{ij}=1$ for all $i,j$ (i.e. the entries of
the matrix $\cA$ are linear forms).    We have  a  $t\times (t+c-1)$ matrix
$\overline{ \cA }=(x_{i,j})$ of indeterminates and the ideal
$I _{t}(\overline{ \cA })\subset S:=k[x_{i,j}]$ which verifies
$$\depth  _{\mathfrak{m}_S}I_t(\overline{ \cA})/I_t(\overline{ \cA })^2=t(t+c-1)-2c+1.$$
We choose $t(t+c-1)-n-1$ general linear forms $\ell _1, \cdots ,
\ell_{t(t+c-1)-n-1}\in S=k[x_{i,j}]$ and we set $S/(\ell _1,\cdots ,\ell
_{t(t+c-1)-n-1})\cong k[x_0,x_1, \cdots ,x_n]=:R$. Let us call $I\subset R$
the ideal of $R$ isomorphic to the ideal $I_{t}(\overline{ \cA })/(\ell
_1,\cdots ,\ell _{t(t+c-1)-n-1})$ of $S/(\ell _1,\cdots ,\ell
_{t(t+c-1)-n-1})$. $I$ is nothing but the ideal $I_{t}(\cA)$ where
$\cA=(m_{i,j})$ is a $t\times (t+c-1)$ homogeneous matrix with entries that
are linear forms in $k[x_0, x_1, \cdots , x_n]$ obtained from $\overline{ \cA
}=(x_{i,j})$ by substituting using the equations $\ell _1,\cdots ,\ell
_{t(t+c-1)-n-1}$. Since $\depth _{\mathfrak{m}_S} I_t(\overline{ \cA
})/I_t(\overline{ \cA })^2=t(t+c-1)-2c+1$ and $t(t+c-1)-n-1\le t(t+c-1)-2c+1$,
we can assume that $\ell _1,\cdots ,\ell _{t(t+c-1)-n-1}$ is a regular
sequence on both $I_t(\overline{ \cA })/I_t(\overline{ \cA })^2$ and
$S/I_t(\overline{ \cA })$; and we conclude that
$$\depth  _{\mathfrak{m}_R}I/I^2=\depth  _{\mathfrak{m}_S}I_t(\overline{ \cA })/I_t(\overline{ \cA
})^2-(t(t+c-1)-n-1)=n-2c+2.$$

\noindent \underline{Case 2.} Since $d_{i,j}> 0$ for all $i,j$, it is
enough to raise the entry $m_{i,j}$ of the above matrix $\cA$ to the power
$d_{i,j}$.

 (2) It follows from (1) and $\depth_J I/I^2\ge \depth _J A-(\dim A-\depth  _{\mathfrak{m}}I/I^2)$, cf. \cite{K2007}; Lemma 7 because $\depth _J A=c+2$ (resp. $n+1-c$) for $n\ge 2c+1$ (resp. $n\le 2c$).
\end{proof}

As an immediate and nice consequence of the above result, we get for $n\ge 2c+1$ that the cohomology of the conormal bundle $\Hl^{j}_{\mathfrak{m}}(I/I^2)$ is non-zero for only
  one value of $j \ne \dim R/I$, cf. \cite{A} for $c = 2$. Analogous result for the normal bundle was proved by Kleppe in \cite{K2011}; Theorem 5.11.

\begin{corollary} \label{vanis3} Let $\cA $ be a $t\times (t+c-1) $
  homogeneous matrix with entries that are general forms of positive
  degree. Set $J=I_{t-1}(\cA)$, $A=R/I$
  and let $K_A$ be the canonical module of $A$. We have:
  \begin{itemize}
  \item[(i)]  $\Hl^{k}_{\mathfrak{m}}(I/I^2)=0$ for $k < n-2c+2$ or,
  equivalently,
$\Ext^{j}_A(I/I^2,K_A)=0$ for $c\le j \le n-c+1.$
\item[(ii)]  $\Hl^{k}_{\mathfrak{m}}(I/I^2)=0$ for $max(3,n-2c+2)<k<n-c+1$.
\end{itemize}
\end{corollary}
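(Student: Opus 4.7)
Both parts of the corollary are essentially direct translations of results already in hand; the plan is a short case analysis once we invoke Proposition~\ref{depthconormal} and Corollary~\ref{vanis2}.

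For part (i), I would argue as follows. The hypothesis on $\cA$ (general entries of positive degree) is precisely the one in Proposition~\ref{depthconormal}(1), so $\depth_{\mathfrak{m}} I/I^{2}=n-2c+2$, and the characterization of depth via vanishing of local cohomology gives $\Hl^{k}_{\mathfrak{m}}(I/I^{2})=0$ for $k<n-2c+2$. For the ``equivalent'' formulation, I would apply local duality on the Cohen--Macaulay ring $A$ of dimension $d=n-c+1$:
\[ \Hl^{k}_{\mathfrak{m}}(I/I^{2}) \;\cong\; \Ext^{d-k}_{A}(I/I^{2},K_{A})^{\vee}, \]
and the change of variable $j=d-k$ rewrites the vanishing range $0\le k<n-2c+2$ as $c\le j\le n-c+1$.

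For part (ii), the plan is to feed the other half of Proposition~\ref{depthconormal} into Corollary~\ref{vanis2}, with a case split according to whether $n\ge 2c+1$ or $n\le 2c$. When $n\ge 2c+1$, generic entries force $\depth_{J}A=c+2$ (since the generic codimension $2c+2$ of $I_{t-1}(\cA)$ is attained), so one may invoke Corollary~\ref{vanis2} with $i=c-2$ to obtain $\Hl^{k}_{\mathfrak{m}}(I/I^{2})=0$ for $n-2c+2<k\le n-c$, which, since $n-2c+2\ge 3$ in this regime, is exactly $\max(3,n-2c+2)<k<n-c+1$. When $n\le 2c$, the generic codimension $2c+2$ cannot fit in the $(n+1)$-dimensional ring $R$, forcing $J$ to be $\mathfrak{m}$-primary in $R$, whence $\depth_{J}A=\depth_{\mathfrak{m}}A=n+1-c$ by Cohen--Macaulayness of $A$; taking $i=n-c-3$ in Corollary~\ref{vanis2} yields $\Hl^{k}_{\mathfrak{m}}(I/I^{2})=0$ for $3<k\le n-c$, and here $\max(3,n-2c+2)=3$, so the range matches.

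No step in the above is an obstacle, because all the hard work has been done upstream: the tricky depth equality of Proposition~\ref{depthconormal}(1) reduces via linear sections to the Bruns--Vetter formula for $\depth \Omega$ in the generic matrix case, and the nontrivial vanishing of $\Ext^{j}_{A}(I/I^{2},K_{A})$ in low degree is already packaged in Corollary~\ref{vanis2}. The only task remaining for the corollary itself is the bookkeeping between the $\Hl^{k}_{\mathfrak m}$ and $\Ext^{j}_{A}(-,K_{A})$ formulations and the two depth regimes $n\ge 2c+1$ versus $n\le 2c$.
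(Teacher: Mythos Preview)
Your proposal is correct and follows essentially the same route as the paper: part (i) via Proposition~\ref{depthconormal}(1) plus local (Gorenstein) duality, and part (ii) by feeding $\depth_{J}A=\min(c+2,\,n-c+1)$ into Corollary~\ref{vanis2}. Your explicit case split $n\ge 2c+1$ versus $n\le 2c$, with the corresponding choices $i=c-2$ and $i=n-c-3$, is in fact slightly cleaner than the paper's somewhat compressed statement of the vanishing range.
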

\begin{proof} (i) It follows from Proposition \ref{depthconormal} and
  Gorenstein duality.

(ii) Since $\depth _JA=min(c+2,n-c+1)$, we can apply Corollary \ref{vanis2} and we get $\Hl^{k}_{\mathfrak{m}}(I/I^2)=0$ for $n-2c+2<k\le \dim X=n-c.$
Combining with (i) we get what we want.
\end{proof}

\begin{theorem} \label{simpleNormal} Let $X\subset \PP^n$ be a standard
  determinantal scheme of codimension $c\ge 2$ associated to a $t\times
  (t+c-1)$ matrix $\cA$ with entries that are general forms of positive
  degree. Let ${\mathcal N}_X(\mathcal M^i):= {\mathcal H}om
  _{\odi{X}}(\mathcal{I}_X/\mathcal{I}_X^2, \widetilde{S_{i}M})$ for $-1 \le i
  \le c-1$ and assume $n\ge 2c$. Then,
 $$\Hom( {\mathcal N}_X(\mathcal M^i),{\mathcal N}_X(\mathcal M^i))\cong \
 _0\!\Hom_A(I/I^2,I/I^2)\cong K$$ provided $max\{n_{2j}\}<2\cdot
 min\{n_{1i}\}$. In particular, ${\mathcal N}_X$ and ${\mathcal N}_X(\mathcal
 M^i)$ are simple, and thus indecomposable.
\end{theorem}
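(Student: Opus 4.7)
The plan is to reduce the computation of $\Hom({\mathcal N}_X(\mathcal M^i), {\mathcal N}_X(\mathcal M^i))_0$ to $\Hom_A(I/I^2, I/I^2)_0$ by passing to the locally-free locus, and then compute the latter using the Eagon--Northcott presentation of $I$ together with the degree hypothesis. Set $U := X \setminus Z$, where $Z$ is the closed subset furnished by Proposition~\ref{depthconormal}, so that $U \hookrightarrow \PP^n$ is a local complete intersection and $\depth_{I(Z)} I/I^2 \ge 2$. On $U$ the sheaf $\widetilde{S_iM}$ is invertible ($\widetilde M$ has rank one on $U$, hence so does $\widetilde{S_iM}$) and $\widetilde{I/I^2}$ is locally free, so $\mathcal{N}_X(\mathcal{M}^i)|_U \cong (\widetilde{I/I^2})^{\vee}|_U \otimes \widetilde{S_iM}|_U$; the invertible factor cancels from both arguments of the sheaf-Hom, giving ${\mathcal H}om(\mathcal{N}_X(\mathcal{M}^i), \mathcal{N}_X(\mathcal{M}^i))|_U \cong {\mathcal H}om(\widetilde{I/I^2}, \widetilde{I/I^2})|_U$. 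Combining restriction to $U$, this isomorphism, and the comparison~\eqref{NM} (applied with $r = 1$ to $(N,L) = (I/I^2, I/I^2)$) yields an injection $\Hom(\mathcal{N}_X(\mathcal{M}^i), \mathcal{N}_X(\mathcal{M}^i))_0 \hookrightarrow \Hom_A(I/I^2, I/I^2)_0$.

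To compute $\Hom_A(I/I^2, I/I^2)_0$, use $\Hom_A(I/I^2, I/I^2) = \Hom_R(I, I/I^2)$ coming from $I/I^2 \cong I\otimes_R A$. Applying $\Hom_R(-, I/I^2)$ to the presentation $F_2 \to F_1 \to I \to 0$ from~\eqref{eagonnorthcott} gives
$$0 \to \Hom_R(I, I/I^2)_0 \to \bigoplus_i (I/I^2)_{n_{1i}} \to \bigoplus_j (I/I^2)_{n_{2j}}.$$
The hypothesis $\max n_{2j} < 2 \min n_{1i}$, combined with the minimality of~\eqref{eagonnorthcott} (which forces $\max n_{1i} < \max n_{2j}$), implies $(I^2)_{n_{1i}} = (I^2)_{n_{2j}} = 0$; hence the surjections $I_{n_{1i}} \twoheadrightarrow (I/I^2)_{n_{1i}}$ and $I_{n_{2j}} \twoheadrightarrow (I/I^2)_{n_{2j}}$ are isomorphisms, and the same sequence identifies $\Hom_A(I/I^2, I/I^2)_0 \cong \Hom_R(I, I)_0$. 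Finally, given $\psi \in \Hom_R(I, R)_0$ and non-zero-divisors $f_1, f_2 \in I$, the equality $f_1\psi(f_2) = f_2 \psi(f_1)$ shows that $\psi$ is multiplication by some $\alpha \in \mathrm{Frac}(R)_0$; writing $\alpha = g/h$ with $\gcd(g,h) = 1$ in the UFD $R$ and using $\alpha f \in R$ for every $f\in I$ forces $h$ to divide the positive-height ideal $I$, so $h \in K^{\times}$ and $\alpha \in K$. Hence $\Hom_R(I,I)_0 = K$, and combining with the injection from the first paragraph and the fact that the identity is a nonzero endomorphism yields $\Hom(\mathcal N_X(\mathcal M^i), \mathcal N_X(\mathcal M^i))_0 = K$, giving simplicity (and thus indecomposability) of $\mathcal N_X(\mathcal M^i)$.

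The main obstacle is the depth bookkeeping underlying the first paragraph: one must verify that the endomorphism sheaf ${\mathcal H}om(\mathcal N_X(\mathcal M^i), \mathcal N_X(\mathcal M^i))$ has positive depth along $Z$, so that the restriction $\Hom_X \hookrightarrow H^0(U, -)$ is injective, and that the comparison~\eqref{NM} really applies on both sides of the resulting Hom pairing. These reductions rely on $S_iM$ being MCM (Proposition~\ref{resol}(ii)) together with $\codim_X Z \ge 2$ from Proposition~\ref{depthconormal}, but the detailed verification requires care. Once they are in hand the degree hypothesis $\max n_{2j} < 2 \min n_{1i}$ is tailor-made to wipe out $I^2$ in the graded pieces occurring in the Eagon--Northcott presentation, and the UFD argument identifying $\Hom_R(I,I)_0$ with $K$ is an elementary clean-up.
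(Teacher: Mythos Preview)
Your proof is correct and follows essentially the same route as the paper's: pass to the l.c.i.\ locus $U=X\setminus V(J)$, cancel the invertible twist $\widetilde{S_iM}$, invoke the depth of $I/I^2$ from Proposition~\ref{depthconormal} together with \eqref{NM} to reduce to ${}_0\Hom_A(I/I^2,I/I^2)$, and then kill $I^2$ in the relevant graded pieces via the hypothesis $\max\{n_{2j}\}<2\min\{n_{1i}\}$. Two minor differences are worth noting. First, the paper establishes the full isomorphism $\Hom(\mathcal N_X(\mathcal M^i),\mathcal N_X(\mathcal M^i))\cong{}_0\Hom_A(I/I^2,I/I^2)$ by using $\depth_J N_M\ge 2$ on both ends, whereas you only produce an injection and then close the loop with the identity endomorphism; your shortcut is perfectly valid but yields the stated isomorphism only a posteriori. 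Second, to show ${}_0\Hom_R(I,I)\cong K$ the paper uses $\Ext^1_R(A,R)=0$ (from $c\ge 2$) to get $\Hom_R(I,R)\cong R$ and then observes $\Hom_R(I,I)$ is an ideal containing $1$; your UFD argument is a more elementary substitute. The depth bookkeeping you flag as the main obstacle is exactly what the paper fills in, using that $S_iM$ is MCM and that $\Hom$ into a module of $J$-depth $\ge 2$ again has $J$-depth $\ge 2$.
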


\begin{proof}  First of all we observe that $\Hom_R(I,I)\cong R$. Indeed, we have the following diagram:
 \[
 \begin{array}{cccccccccc}
 &  & & & 0 & & & \\ & & & & \downarrow & & & & & \\ & &  & &\Hom_R(R,R)=R & & & \\
  & & & & \downarrow & & & & & \\ 0 &   \longrightarrow
  &  \Hom_R(I,I)&  \longrightarrow & \Hom_R(I,R) &
 \longrightarrow & \Hom_R(I,A)  \longrightarrow & \Ext^1_R(I,I)
 \\ &  &&  &
 \ \ \downarrow & &  & &  & \\  & &
 & & \Ext^1_R(A,R)=0 & &    &
 \end{array}
 \]

 \noindent Since $c\ge 2$,  $\Ext^1_R(A,R)=0$ and $R\cong \Hom_R(R,R)\cong \Hom_R(I,R)$.
  Hence, it follows that $\Hom_R(I,I)$
 is an ideal of $\Hom_R(I,R)=R$ containing the identity; and so $\Hom_R(I,I)\cong R$.

\vskip 2mm
 \noindent {\bf Claim:}  If  $max\{n_{2j}\}<2\cdot min\{n_{1i}\}$, then
 $K\cong \  _0\!\Hom_R(I,I)\cong  \
   _0\!\Hom_A(I/I^2,I/I^2)$.

\vskip 2mm
\noindent {\bf Proof of the Claim:} We apply $\Hom_R(-,I)$ and
$\Hom_R(-,I/I^2)$ to the minimal resolution of $I$ deduced from
(\ref{eagonnorthcott}) and we get the following commutative diagram with exact
horizontal sequences

 \[
 \begin{array}{cccccccccc}
 &  & & &   0 &  & 0 & \\
  & & & &  \uparrow & & \uparrow & &  \\
 0 & \longrightarrow
 & \Hom_R(I,I/I^2)  & \longrightarrow  & \oplus_iI/I^2(n_{1i})& \longrightarrow &
 \oplus_j I/I^2(n_{2j})& \longrightarrow& \\
  & & \uparrow & & \uparrow & &  \uparrow & &  \\ 0 &   \longrightarrow
 & \Hom_R(I,I)  & \longrightarrow  & \oplus_iI(n_{1i})& \longrightarrow &
 \oplus_j I(n_{2j})&  \longrightarrow \\
   & &  & & \uparrow & & \uparrow & &  \\
  &  & &   & \oplus_i I^2(n_{1i}) & \longrightarrow  & \oplus_j I^2(n_{2j}) & \\
   & &  &  & \uparrow & & \uparrow & &  \\
   &  & & &   0 &  & 0 & \\
  \end{array}
 \]

\noindent Since $\Hom_R(I,I/I^2)\cong \Hom_A(I/I^2,I/I^2)$ and $_0\!\Hom_R(I,I^2)=0$, it suffices to show that
 $( I^2(n_{2j}))_0=0$ for all $j$. Using the natural surjective map $S_2I\twoheadrightarrow I^2$, it suffices to show that $(S_2I)_\mu=0$ for $\mu:=max\{n_{2j}\}$. But   $(S_2I)_\mu=0$ because we have a surjective map $\oplus _{i\le j}R(-n_{1i}-n_{1j})_{\mu}
  \cong S_2(\oplus _{i}R(-n_{1i}))_ {\mu}\twoheadrightarrow (S_2I)_\mu $ and
  $\oplus _{i\le j}R(-n_{1i}-n_{1j})_{\mu}=0$
  by the assumption  $max\{n_{2j}\}<2\cdot min\{n_{1i}\}$. Hence, the claim is proved.

\vskip 2mm
 Let us now prove that  ${\mathcal N}_X(\mathcal M^i)$ is simple, i.e.  $\Hom({\mathcal N}_X(\mathcal M^i),{\mathcal N}_X(\mathcal M^i))\cong \
 _0\!\Hom_A(I/I^2,I/I^2)\cong K$.  Set $J=I_{t-1}(\cA)$. Since $\depth _J R/I=min(c+2, n+1-c)\ge 2$, we get that $\depth _{J}N_M
 \ge 2$ where $N_M:=\Hom_A(I/I^2,S_iM)$. This also implies $\depth_{J}\Hom_A(N_M,N_M)\ge 2$,
 whence $$\Hom_A(N_M,N_M)
 \cong \Hl^0_*(X\setminus Z,{\mathcal H}om_{\odi{X}}({\mathcal N}_X(\mathcal M^i),{\mathcal N}_X(\mathcal M^i)))
 \cong \Hl^0_*(X,{\mathcal H}om_{\odi{X}}({\mathcal N}_X(\mathcal M^i),{\mathcal N}_X(\mathcal M^i)))
 $$
 where $Z:=V(J)$. Using that $ {\mathcal N}_X(\mathcal M^i) \cong  ({\mathcal
   I}_X/{\mathcal I}_X ^2)^{\vee } \otimes \widetilde {S_iM}$ is locally free on
 $X\setminus Z$, we have
 $$ \Hl^0_*(X\setminus Z, {\mathcal H}om _{\odi{X}}({\mathcal N}_X(\mathcal M^i),{\mathcal N}_X(\mathcal M^i)))=  \Hl^0_*(X\setminus Z, {\mathcal H}om _{\odi{X}} ({\mathcal I}_X/{\mathcal I}_X ^2,
            {\mathcal I}_X
 /{\mathcal I}_X ^2)).$$
 Since $n\ge 2c$, we can apply Proposition \ref{depthconormal} and we get
 $\depth_{I(Z)} I/I^2\ge 2$  which implies that $\depth _{I(Z)}\Hom_A(I/I^2,I/I^2)
 \ge 2$,
 whence $$\Hom_A(I/I^2,I/I^2)
 \cong \Hl^0_*(X\setminus Z,{\mathcal H}om _{\odi{X}}({\mathcal I}_X/{\mathcal I}_X ^2,{\mathcal I}_X
 /{\mathcal I}_X ^2)). $$
Putting altogether we obtain
             $$\Hl^0_*(X,{\mathcal H}om_{\odi{X}}({\mathcal N}_X(\mathcal M^i),{\mathcal N}_X(\mathcal M^i)))\cong
              \Hom_A(I/I^2,I/I^2);$$
and taking the degree zero piece of these graded modules we get what we want.
\end{proof}

\begin{remark} \label{simpleNormal2} \rm
In Theorem \ref{simpleNormal}, the hypothesis $n\ge 2c$ together with the generality of the entries
 can be replaced by
\begin{equation}\label{hypothesis1} \Hl^1_{I(Z)} (I/I^2)_{n_{1i}}=\Hl^0_{I(Z)} (I/I^2)_{n_{1i}}=\Hl^0_{I(Z)} (I/I^2)_{n_{2j}} =0 \ \ \text{ for any } i \text{ and } j
\end{equation}
where $Z\subset X$ is a closed subset such that $X\setminus Z\hookrightarrow \PP^n$ is a local complete intersection. Note that $I(Z)= \mathfrak{m}$ if we can take $Z=\emptyset $, e.g. if $n\le 2c+1$ and the entries are general forms. To show it, observe that the exact cohomology sequence associated to $0\rightarrow I^2\rightarrow I\rightarrow I/I^2\rightarrow 0$ gives us ($Z:=V(J)$)
$$\Hl^1_{I(Z)} (I/I^2)_{\mu}\cong \Hl^2_{I(Z)} (I^2)_{\mu } \cong \Hl^1(X\setminus Z,{\mathcal I}_X^2(\mu )) \ \ \text{ and} $$
$$\Hl^0_{I(Z)} (I/I^2)_{n_{2i}}\cong \Hl^1_{I(Z)} (I^2)_{n_{2i} } \cong \Hl^0(X\setminus Z,{\mathcal I}_X^2(n_{2i} )) \ \text{ since } \ I^2(n_{2i})_0=0.$$
In the proof of Theorem \ref{simpleNormal}, we show
 $$\begin{array}{ccl}\Hom_A(N_M,N_M)
   \cong \Hl^0_*(X\setminus Z, {\mathcal H}om _{\odi{X}} ({\mathcal I}_X/{\mathcal I}_X ^2,
   {\mathcal I}_X
   /{\mathcal I}_X ^2))\end{array}$$
 and we use  the hypothesis $n\ge 2c$ to get  $\depth_{I(Z)} I/I^2\ge 2$ and, hence,
\begin{equation}\label{aux3}\Hl^0_*(X\setminus Z,{\mathcal H}om _{\odi{X}}({\mathcal I}_X/{\mathcal I}_X ^2,{\mathcal I}_X
 /{\mathcal I}_X ^2))\cong
              \Hom_A(I/I^2,I/I^2).\end{equation}
However, to get the Theorem \ref{simpleNormal} we only need  the isomorphism (\ref{aux3}) in degree 0. Letting $\Ext^{j}_{I(Z)}(N,-)$ be the right derived functor of the
composed functor   $\Hl _{I(Z)}^0 \circ \Hom_R(N,-)$, cf. \cite{SGA2}, Expos\'{e} VI for details,
we have in degree 0 an exact sequence
$$0 \longrightarrow \  _0\! \Hom_{I(Z)}(I/I^2,I/I^2)\longrightarrow \ _0\!\Hom_A(I/I^2,I/I^2)\longrightarrow $$
$$ \Hl^0(X\setminus Z,{\mathcal H}om _{\odi{X}}({\mathcal I}_X/{\mathcal I}_X ^2,{\mathcal I}_X
 /{\mathcal I}_X ^2)) \longrightarrow \ _0\! \Ext^1_{I(Z)}(I/I^2,I/I^2)\longrightarrow $$ where
$\ _0\! \Hom_{I(Z)}(I/I^2,I/I^2)\cong \ _0\! \Hom(I/I^2, \Hl^0_{I(Z)} (I/I^2))$ and the terms in
$$
0 \longrightarrow \  _0\! \Ext^1(I/I^2, \Hl^0_{I(Z)} (I/I^2))\longrightarrow \ _0\! \Ext^1_{I(Z)}(I/I^2,I/I^2)\longrightarrow \ _0\! \Hom(I/I^2, \Hl^1_{I(Z)} (I/I^2))
$$ vanish by the assumptions (\ref{hypothesis1}). Hence, we conclude $$\Hl^0(X\setminus Z,{\mathcal H}om _{\odi{X}}({\mathcal I}_X/{\mathcal I}_X ^2,{\mathcal I}_X
 /{\mathcal I}_X ^2))\cong \
              _0\!\Hom_A(I/I^2,I/I^2)$$ which proves that in Theorem \ref{simpleNormal}
              we can replace the hypothesis $n\ge 2c$  by
the assumptions (\ref{hypothesis1}).
\end{remark}

\begin{remark} \rm   If $X$ is a linear standard determinantal scheme and $t>1$ then
$n_{1i}=t$ for all $i$, $n_{2j}=t+1$ for all $j$, and the hypothesis $max\{n_{2j}\}<2\cdot min\{n_{1i}\}$ is satisfied.
\end{remark}

\begin{remark}\label{rem} \rm It is worthwhile to point out that in Theorem \ref{simpleNormal}   the  hypothesis $max\{n_{2j}\}<2\cdot min\{n_{1i}\}$ cannot be dropped when $c=2$. To prove it we will compute
 the cokernel of  the morphism $K=R_0\cong \ _0 \! \Hom_R(I,I) \stackrel{\phi _0 }{\longrightarrow } \ _0 \! \Hom_A(I/I^2,I/I^2)$. To this end, we consider  the diagram

{\small $$\begin{array}{ccccccccc}\Hom_R(I,I) & \stackrel{\phi  }{\longrightarrow } & \Hom_R(I,I/I^2) & \rightarrow  & \Ext^1_R(I,I^2)  & \rightarrow & \Ext^1_R(I,I) &  \longrightarrow & \Ext^1_R(I,I/I^2) \\
 & & & & \downarrow \cong &  & \downarrow  \cong & & \downarrow \cong  \\
 & & & & \Ext^1_R(I,R)\otimes I^2 & \rightarrow & K_A (n+1)\otimes _RI &
 \stackrel{\cong  }{\longrightarrow } & K_A (n+1)\otimes _RI/I^2 . \\ & & & & \downarrow \cong \\ & & & & K_A (n+1)\otimes _R I^2
 \end{array} $$}
where we have used $\Ext^2_R(I,-)=0$ to get the isomorphisms. Therefore, $$\coker(\phi )=  K_A (n+1)\otimes _R I^2 \cong K_A (n+1)\otimes _A I^2/I^3.$$

 On the other hand for a standard determinantal scheme $X\subset \PP^n$ of codimension 2, we have the following well known exact sequences:
 $$0 \longrightarrow  G^*=\bigoplus _{j=1}^tR(-n_{2j})\longrightarrow F^*=\bigoplus _{i=1}^{t+1}R(-n_{1i}) \longrightarrow I\longrightarrow 0, \text{ and }$$
  \begin{equation} \label{newseq} 0 \longrightarrow  \wedge ^2G^*\longrightarrow G^*\otimes F^*  \longrightarrow S_2F^*\longrightarrow S_2I\longrightarrow 0.
  \end{equation}
   Since a generic complete intersection of codimension 2 is syzygetic, i.e. $S_2I\cong I^2$, the exact sequence
  $$ \cdots \longrightarrow F \longrightarrow G \longrightarrow K_A(n+1) \longrightarrow 0$$
 leads to a commutative diagram
  $$\begin{array}{ccccccc}  & &  G\otimes _R G^*\otimes _R F^* \\
   &  & \downarrow   & &   \\
   F\otimes _R S_2F^* & \longrightarrow  & G\otimes _R S_2F^* &   &   &  &  \\
  \downarrow  &  & \downarrow   & &   \\
  F\otimes _R I^2 & \longrightarrow & G\otimes _R I^2 & \longrightarrow  & K_A (n+1)\otimes _R I^2  &  \longrightarrow & 0.
 \end{array}. $$

\noindent It follows that {\small $$(F\otimes S_2F^*)\oplus (G\otimes G^*\otimes F^*) \longrightarrow G\otimes S_2F^*
=\bigoplus _{1\le i \le j\le t+1 \atop 1\le k\le t }R(-n_{1i}-n_{1j}+n_{2k})\rightarrow K_A(n+1)\otimes _R I^2 \rightarrow 0$$} is exact
and $\phi _0$ is not surjective e.g. in the case ($n_{21}=n_{22}=4$ and $n_{11}=2<n_{12}=n_{13}=3$):
$$ 0 \longrightarrow  R(-4)^2  \longrightarrow  R(-3)^2\oplus R(-2) \longrightarrow  I  \longrightarrow  0.$$
\end{remark}

\vskip 2mm
We have stated Theorem \ref{simpleNormal}  for standard determinantal  schemes because  the paper concerns the main features of the normal sheaf of a standard determinantal  scheme $X\subset \PP^n$. Nevertheless, the result works in a much more general set up. Indeed, the assumption  that $X$ is a standard determinantal scheme is not necessary and the result and its proof hold for any graded quotient of the  polynomial ring provided  $\depth_{I(Z)}I/I^2\ge 2$. In fact, it holds

\begin{theorem}\label{simple}
 Let $X\subset \PP^n$ be a closed subscheme of
 codimension $c\ge 2$ (not necessarily ACM)
 with a minimal free $R$-resolution $$
  \cdots \longrightarrow \oplus _j^{b_2} R(-n_{2j})\longrightarrow \oplus _{i}^{b_1}R(-n_{1i})
\longrightarrow R\longrightarrow R/I \longrightarrow 0
$$
where $I:=I(X)$. Let $Z\subset X$ be a closed subset such that $X\setminus
Z\hookrightarrow \PP^n$ is a local complete intersection. Let
$L$ be a finitely generated $R/I$-module that is invertible over $X\setminus
Z$, put ${\mathcal N}_X(\mathcal L):= {\mathcal H}om
_{\odi{X}}(\mathcal{I}_X/\mathcal{I}_X^2, \widetilde{L})$ and assume $\depth
_{I(Z)}L\ge 2$, $\depth_{I(Z)}I/I^2\ge 2$ and $max\{n_{2j}\}<2\cdot
min\{n_{1i}\}$. Then,
 $$\Hom( {\mathcal N}_X(\mathcal L),{\mathcal N}_X(\mathcal L))\cong \
 _0\!\Hom_A(I/I^2,I/I^2)\cong K.$$
\end{theorem}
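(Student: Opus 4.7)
The plan is to follow the proof of Theorem \ref{simpleNormal} essentially verbatim, noting that the standard--determinantal hypothesis there entered only through the depth computation of Proposition \ref{depthconormal} (now replaced directly by the assumption $\depth_{I(Z)}I/I^2\ge 2$) and through the numerical invariants of the $R$--resolution of $I$ (now part of the data). The three things to establish are (i)~$\Hom_R(I,I)\cong R$; (ii)~${}_0\Hom_R(I,I)\cong{}_0\Hom_A(I/I^2,I/I^2)$; and (iii)~$\Hom_A(N_L,N_L)\cong\Hom_A(I/I^2,I/I^2)$ as graded $A$--modules, where $N_L:=\Hom_A(I/I^2,L)$. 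After these three identifications the conclusion follows by taking the degree--$0$ part and using $R_0=K$.

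Steps (i) and (ii) are identical to their counterparts in the proof of Theorem \ref{simpleNormal}, since neither uses any determinantal structure. For (i), apply $\Hom_R(I,-)$ to $0\to I\to R\to A\to 0$; because $c\ge 2$ gives $\Ext^1_R(A,R)=0$, one obtains $\Hom_R(I,R)\cong R$, and $\Hom_R(I,I)$ is then an ideal of $R$ containing the identity, hence all of $R$. For (ii), apply $\Hom_R(-,I)$ and $\Hom_R(-,I/I^2)$ to the minimal $R$--resolution of $I$ and compare by means of $0\to I^2\to I\to I/I^2\to 0$, exactly as in the Claim inside Theorem \ref{simpleNormal}; the reduction is to showing $(I^2)_{n_{2j}}=0$ for every $j$, which follows from the surjection $S_2\bigl(\oplus_i R(-n_{1i})\bigr)\twoheadrightarrow S_2I\twoheadrightarrow I^2$ together with the hypothesis $\max\{n_{2j}\}<2\min\{n_{1i}\}$.

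Step (iii) is the part that has to be checked in the present generality. Set $U:=X\setminus Z$. Since $U\hookrightarrow\PP^n$ is a local complete intersection, $\mathcal I_X/\mathcal I_X^2$ is locally free on $U$; since $L$ is invertible on $U$, $\mathcal N_X(\mathcal L)|_U=(\mathcal I_X/\mathcal I_X^2)^{\vee}\otimes\widetilde{L}|_U$ is locally free and the twist by $\widetilde{L}\otimes\widetilde{L}^{-1}\cong\OO_U$ cancels inside $\shH om$, giving $\shH om(\mathcal N_X(\mathcal L),\mathcal N_X(\mathcal L))|_U\cong\shH om(\mathcal I_X/\mathcal I_X^2,\mathcal I_X/\mathcal I_X^2)|_U$. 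Using $\depth_{I(Z)}I/I^2\ge 2$ with \eqref{NM}, I identify $\Hom_A(I/I^2,I/I^2)$ with the section module $\Hl^0_*(U,\shH om(\mathcal I_X/\mathcal I_X^2,\mathcal I_X/\mathcal I_X^2))$; provided $\depth_{I(Z)}N_L\ge 2$, \eqref{NM} also yields $\Hom_A(N_L,N_L)\cong\Hl^0_*(U,\shH om(\mathcal N_X(\mathcal L),\mathcal N_X(\mathcal L)))$. The remaining depth inequality is verified by applying $\Hom_A(-,L)$ to the presentation $\oplus_j A(-n_{2j})\to\oplus_i A(-n_{1i})\to I/I^2\to 0$ obtained from the given $R$--resolution of $I$ and combining $\depth_{I(Z)}L\ge 2$ with the standard depth estimate $\depth K\ge\min(\depth M,\depth M'+1)$ for kernels in a short exact sequence $0\to K\to M\to M'\to 0$.

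The only real obstacle lies in Step (iii), and is essentially bookkeeping: one must ensure that $\widetilde{L}\otimes\widetilde{L}^{-1}\cong\OO_U$ really cancels (immediate from the invertibility of $L$ on $U$) and that $\depth_{I(Z)}N_L\ge 2$. No ingredient beyond those already used in the proof of Theorem \ref{simpleNormal} is needed, confirming that the argument transfers cleanly to the stated level of generality.
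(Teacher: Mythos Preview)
Your proof is correct and follows the paper's own approach; indeed the paper gives no separate proof of Theorem \ref{simple} but simply remarks that the argument of Theorem \ref{simpleNormal} carries over verbatim, which is precisely what you do. Your explicit verification that $\depth_{I(Z)}N_L\ge 2$ from the hypothesis $\depth_{I(Z)}L\ge 2$ (via the presentation of $I/I^2$ and the kernel-depth estimate) is exactly the adaptation needed to replace the determinantal input $\depth_J S_iM\ge 2$ used in Theorem \ref{simpleNormal}.
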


\begin{remark} \rm For a complete intersection of codimension $c\ge 2$ and dimension $n-c\ge 1$, the conclusion is false while all assumptions, except for $max\{n_{2j}\}<2\cdot min\{n_{1i}\}$ are obviously satisfied.
\end{remark}

\begin{remark} \rm
As explained in Remark \ref{simpleNormal2},
 the hypothesis $\depth_{I(Z)}I/I^2\ge 2$ in Theorem \ref{simple} can be replaced by
\begin{equation}\label{hypothesis2} \Hl^1_{I(Z)} (I/I^2)_{n_{1i}}=\Hl^0_{I(Z)} (I/I^2)_{n_{1i}}=\Hl^0_{I(Z)} (I/I^2)_{n_{2j}} =0  \ \text{ for any } i \text{ and } j.
\end{equation}
\end{remark}

As an application we have:

\begin{corollary}  Let $X\subset \PP^n$ be either a codimension 2 ACM subscheme with a minimal free $R$-resolution $$
 0\longrightarrow \oplus _j^{\nu} R(-n_{2j})\longrightarrow \oplus _{i}^{\nu +1}R(-n_{1i})
\longrightarrow R\longrightarrow R/I \longrightarrow 0
$$ or a codimension 3 arithmetically Gorenstein subscheme
with a minimal free $R$-resolution $$
 0\longrightarrow R(-e) \longrightarrow \oplus _j^{\nu}R(-n_{2j})\longrightarrow \oplus _{i}^{\nu}R(-n_{1i})
\longrightarrow R\longrightarrow R/I \longrightarrow 0.
$$
Let $Z\subset X$ be a closed subset such that $X\setminus Z\hookrightarrow \PP^n$ is a local complete intersection. Assume $\depth _{I(Z)}R/I\ge \begin{cases} 3 & \text{ if } c=2 \\ 2   & \text{ if } c=3 \end{cases}$ and \ $max\{n_{2j}\}<2\cdot min\{n_{1i}\}$. Then,
 ${\mathcal N}_X$ is simple.
\end{corollary}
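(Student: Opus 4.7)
The plan is to apply Theorem~\ref{simple} with $L=R/I=A$, which is trivially invertible on $X\setminus Z$. Since the inequality $\max\{n_{2j}\}<2\min\{n_{1i}\}$ is hypothesized and $\depth_{I(Z)} L=\depth_{I(Z)} A\ge 2$ follows from the stated depth of $R/I$ in either case, the only remaining hypothesis of Theorem~\ref{simple} to check is $\depth_{I(Z)}(I/I^2)\ge 2$. By the Remark following Theorem~\ref{simple}, this may be replaced by the weaker vanishings~(\ref{hypothesis2}).

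For the codimension~$2$ ACM case, I would argue via a projective-dimension bound. Tensoring the length-$2$ minimal free $R$-resolution of $A$ with $A$ yields the four-term exact sequence
\begin{equation*}
0\to \Tor_2^R(A,A)\to F_2\otimes_R A\to F_1\otimes_R A\to I/I^2\to 0,
\end{equation*}
and the standard identification of $\Tor_2^R(A,A)$ with a twist of $\omega_A$ (MCM of rank one over $A$) forces $\pd_A(I/I^2)\le 1$. The Auslander--Buchsbaum formula applied at every prime of $A$ containing $I(Z)$ then gives
\begin{equation*}
\depth_{I(Z)}(I/I^2)\ \ge\ \depth_{I(Z)}(A)-\pd_A(I/I^2)\ \ge\ 3-1\ =\ 2.
\end{equation*}

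For the codimension~$3$ arithmetically Gorenstein case, the conormal module need not be MCM, so the pd-bound alone does not suffice; instead I would verify the weaker form~(\ref{hypothesis2}). The depth hypothesis $\depth_{I(Z)} A\ge 2$ together with $\operatorname{ht} I(Z)\ge c+1=4$ gives $\depth_{I(Z)} I\ge 3$, and the long exact sequence in local cohomology for $0\to I^2\to I\to I/I^2\to 0$ reduces~(\ref{hypothesis2}) to the vanishings
\begin{equation*}
H^{1}_{I(Z)}(I^2)_{n_{1i}}\ =\ H^{1}_{I(Z)}(I^2)_{n_{2j}}\ =\ H^{2}_{I(Z)}(I^2)_{n_{1i}}\ =\ 0.
\end{equation*}
The syzygetic property $S_2 I\cong I^2$ (classical for strongly Cohen--Macaulay codim~$3$ Gorenstein ideals) together with the free resolution of $I^2$ built from symmetric powers of the self-dual Buchsbaum--Eisenbud resolution of $I$ yields an explicit complex for $I^2$; the inequality $\max\{n_{2j}\}<2\min\{n_{1i}\}$ then ensures that all its generators lie in degrees strictly above $n_{1i}$ and $n_{2j}$, and a hypercohomology spectral-sequence argument delivers the required degree-specific vanishings.

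The main obstacle is the codim~$3$ analysis: since $\pd_A(I/I^2)$ can exceed $1$ for non-CI Gorenstein ideals of codimension~$3$, the direct pd-bound fails and one must instead exploit the degree asymmetry encoded in~(\ref{hypothesis2}) via the precise structure of the symmetric resolution of $I^2$. This dichotomy explains the two different depth hypotheses in the statement ($\ge 3$ in codim~$2$ vs.\ $\ge 2$ in codim~$3$).
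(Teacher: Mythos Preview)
Your codimension~$2$ argument is essentially the paper's: both reduce the problem to $\depth_{I(Z)}(I/I^2)\ge\depth_{I(Z)}A-1$, which the paper attributes to Avramov--Herzog \cite{A}. One quibble: the four-term sequence you write does not give $\pd_A(I/I^2)\le 1$, because $\Tor_2^R(A,A)$ is maximal Cohen--Macaulay over $A$ but not free. The correct conclusion from that sequence is a direct depth chase (the depth lemma applied twice), which yields exactly the inequality you want; so the argument is fine once rephrased.

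Your codimension~$3$ analysis, however, rests on a false premise. You assert that $I/I^2$ need not be MCM for a non-CI codimension~$3$ arithmetically Gorenstein ideal and therefore attempt an elaborate workaround via the syzygetic property, the Buchsbaum--Eisenbud resolution of $I^2$, and a hypercohomology spectral sequence. In fact the opposite is true: Buchweitz \cite{Bu} proved that for any licci ideal $I$, the module $I/I^2\otimes_A K_A$ is maximal Cohen--Macaulay. A codimension~$3$ arithmetically Gorenstein ideal is licci and has $K_A\cong A$, so $I/I^2$ is itself MCM. Hence $\depth_{I(Z)}(I/I^2)=\depth_{I(Z)}A\ge 2$ holds immediately, and Theorem~\ref{simple} applies with no further work. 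The dichotomy you propose is therefore inverted: the $c=3$ Gorenstein case is the \emph{easier} one, requiring only $\depth_{I(Z)}A\ge 2$, while the $c=2$ case loses one unit of depth and so needs $\depth_{I(Z)}A\ge 3$. Your spectral-sequence sketch is both unnecessary and, as written, not complete enough to be checked.
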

\begin{proof} By Theorem \ref{simple} it suffices to show that $\depth _{I(Z)} I/I^2 \ge 2$. For $c=2$, we get $\depth _{I(Z)} I/I^2 \ge \depth _{I(Z)} R/I -1$ by \cite{A}, cf. (\ref{newseq}). For $c=3$ we know that $I/I^2$ is MCM by \cite{Bu}
 because $I/I^2\otimes K_A$ is  MCM in the licci case and the canonical module $K_A$ is trivial in the Gorenstein case. So, we are done
\end{proof}

Let us give some examples of ACM schemes $X\subset \PP^N$ with simple normal sheaf.

\begin{example} \rm Let $X\subset \PP^3$ be a (smooth) ACM curve. Since
  $\depth  I/I^2 \ge \dim R/I-1$, we have $\Hl^0_{\mathfrak{m}} (I/I^2)_{\mu }=0$ for any $\mu$. So, according to Theorem \ref{simpleNormal} and Remark \ref{simpleNormal2}, we only need to check
  \begin{equation}\label{assumption} \Hl^1_{\mathfrak{m}}
    (I/I^2)_{n_{1i}}\cong \Hl^1(X, {\mathcal I}_X^2(n_{1i}))=0 \text{ for any }
    i; \text{ and }
\end{equation}
$$max\{n_{2j} \}<2 \cdot min\{n_{1i}\}$$
to conclude that $\mathcal{N}_X$ is simple. Using Macaulay2, we get
\begin{itemize}
\item[(i)] (\ref{assumption}) does not hold  in the linear case.
  \vskip 2mm
  \item[(ii)]  If $\deg {\mathcal A}=\begin{pmatrix} 1 & 1 & 2 \\ 1 & 1 & 2\end{pmatrix} $, then (\ref{assumption}) holds. However, since  the condition
    $4= max\{n_{2j} \}<2\cdot min\{n_{1i}\}=4$  is not true,  we cannot conclude that $\mathcal{N}_X$ is simple. In fact, we have seen in Remark \ref{rem} that ${\mathcal N}_X$ is not simple.
    \vskip 2mm
    \item[(ii)]  If $\deg {\mathcal A}=\begin{pmatrix} 1 & 2 & 2 \\ 1 & 2 & 2\end{pmatrix} $ or $\begin{pmatrix} 2 & 2 & 2 \\ 2 & 2 & 2\end{pmatrix} $ or $\begin{pmatrix} 3 & 2 & 2 \\ 3 & 2 & 2\end{pmatrix} $ or $\begin{pmatrix} 3 & 3 & 1 \\ 3 & 3 & 1\end{pmatrix} $, then (\ref{assumption}) holds as well as  the inequality
    $max\{n_{2j} \}<2 \cdot min\{n_{1i}\}$; and we get that $\mathcal{N}_X$ is simple.
\end{itemize}
In conclusion, the assumptions (\ref{assumption}) and $max\{n_{2j} \}<2\cdot min\{n_{1i}\}$ seem  weak for ACM curves in $\PP^3$.

\end{example}

\begin{example} \rm
Let $X\subset \PP^4$ be a smooth standard determinantal  curve. By Theorem \ref{simpleNormal} and Remark \ref{simpleNormal2}, to prove that $\mathcal{N}_X$ is simple, we only need to check that the following hypothesis are satisfied:
\begin{equation}\label{assumption1} \Hl^1_{\mathfrak{m}} (I/I^2)_{n_{1i}}=\Hl^0_{\mathfrak{m}} (I/I^2)_{n_{1i}}=\Hl^0_{\mathfrak{m}} (I/I^2)_{n_{2j}} =0 \text{ for any } i \text{ and } j; \text{ and }
\end{equation}
$$max\{n_{2j} \}<2\cdot min\{n_{1i}\}.$$
 Using Macaulay2, we get
\begin{itemize}
\item[(i)] (\ref{assumption1}) does not hold  in the linear case.
  \vskip 2mm
  \item[(ii)]  If $\deg {\mathcal A}=\begin{pmatrix} 1 & 2 & 2 & 2 \\ 1 & 2 & 2 & 2\end{pmatrix} $, then (\ref{assumption1}) holds while    $max\{n_{2j} \}<2\cdot min\{n_{1i}\}$  is not true and we cannot conclude that $\mathcal{N}_X$ is simple .
    \vskip 2mm
    \item[(ii)]  If $\deg {\mathcal A}=\begin{pmatrix} 2 & 2 & 2 & 2 \\ 2 & 2 & 2 & 2\end{pmatrix} $, then (\ref{assumption1}) holds as well as  the inequality
    $max\{n_{2j} \}<2 \cdot min\{n_{1i}\}$. Hence,  $\mathcal{N}_X$ is simple.
\end{itemize}
\end{example}

We will end this section with a result about the indecomposability of the normal sheaf of a standard determinantal scheme $X\subset \PP^n$ which does not involve the degrees of the generators (resp. first syzygies) of $I(X)$. To achieve our goal we need the following preliminary lemma.

\begin{lemma}\label{auxiliar}  Let $X\subset \PP^n$ be a codimension $c$ standard determinantal subscheme  associated to a  graded morphism $\varphi :F\longrightarrow  G$.   Set $M=\coker (\varphi)$ and $J=I_{t-1}(\varphi )$. Let $H\cong \PP^{n-1}\subset \PP^n$ be a hyperplane defined by a general  form $h\in R_1$.
Consider $X'=X\cap H\subset H$ the hyperplane section of $X$ and $R'=R/(h)\cong K[z_0, \cdots ,z_{n-1}]$. Then $X'$ is a codimension $c$ standard determinantal subscheme of $\PP^{n-1}$ associated to the  graded morphism $\varphi'=\varphi \otimes 1:F':=F\otimes _R R/(h)\longrightarrow  G':=G\otimes _ R R/(h)$
and  $M':=\coker(\varphi ')\cong M\otimes _R R/(h)$.
Assume $\depth _JA\ge 3$. Then, we have
 $$\Ext^1_R(M,S_{i}M)\otimes _R R/(h)\cong \Ext^1_{R'}(M',S_{i}M') \text{ for } 0\le i \le c.$$
\end{lemma}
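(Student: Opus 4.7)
The plan is to derive the isomorphism directly from the four-term exact sequence \eqref{seqExt1(M,SiM)}, applied simultaneously to $X$ and to the general hyperplane section $X'$, and then to compare the two sequences via base change along $R\to R'=R/(h)$.

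I would begin by observing that, for general $h$, $X'\subset H\cong\PP^{n-1}$ is again a codimension $c$ standard determinantal scheme, defined by $\varphi'=\varphi\otimes_R R'$, so that $M'=\coker(\varphi')\cong M\otimes_R R'$ and, since symmetric powers commute with base change, $S_jM'\cong S_jM\otimes_R R'$ for every $j$. Generality of $h$ ensures that $h$ is $A$-regular; the long exact sequence of local cohomology applied to $0\to A\xrightarrow{h} A\to A'\to 0$ then yields $\depth_{J'}A'\ge\depth_JA-1\ge 2$, where $J'=JR'$ and $A'=A/(h)=R'/I(X')$. Consequently \eqref{seqExt1(M,SiM)} applies to both $X$ and $X'$, giving the four-term exact sequences
$$0 \to S_{i-1}M \to G^*\otimes_R S_iM \to F^*\otimes_R S_iM \to \Ext^1_R(M,S_iM) \to 0,$$
$$0 \to S_{i-1}M' \to G'^{\,*}\otimes_{R'} S_iM' \to F'^{\,*}\otimes_{R'} S_iM' \to \Ext^1_{R'}(M',S_iM') \to 0.$$

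Next I would verify that $h$ is a non-zero-divisor on each module appearing in these sequences. For $0\le j\le c$, Proposition \ref{resol} shows that $S_jM$ is a maximal Cohen-Macaulay $A$-module, hence $\depth S_jM=n+1-c\ge 1$; the same holds for $G^*\otimes_R S_jM$ and $F^*\otimes_R S_jM$, since tensoring with a free $R$-module preserves depth. By Corollary \ref{res} together with $\depth_JA\ge 3$, one has $\depth\Ext^1_R(M,S_iM)\ge\depth A-2\ge 1$ for every $1\le i\le c$. Since $\pd_R R'=1$, these depth bounds translate into $\Tor_1^R(S_jM,R')=0$ and $\Tor_1^R(\Ext^1_R(M,S_iM),R')=0$.

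Splitting the $R$-sequence into $0\to S_{i-1}M\to G^*\otimes S_iM\to K\to 0$ and $0\to K\to F^*\otimes S_iM\to \Ext^1_R(M,S_iM)\to 0$, tensoring each with $R'$, and chasing the two long Tor sequences using the vanishings above (which also force $\Tor_1^R(K,R')=0$), one obtains the exact sequence
$$0 \to S_{i-1}M\otimes R' \to G^*\otimes S_iM\otimes R' \to F^*\otimes S_iM\otimes R' \to \Ext^1_R(M,S_iM)\otimes R' \to 0.$$
The first three terms coincide with the first three terms of the $R'$-sequence via the identifications $S_jM\otimes R'\cong S_jM'$, $G^*\otimes R'\cong G'^{\,*}$ and $F^*\otimes R'\cong F'^{\,*}$, so comparing cokernels yields the desired isomorphism $\Ext^1_R(M,S_iM)\otimes R'\cong\Ext^1_{R'}(M',S_iM')$. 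The main obstacle is the Tor-vanishing $\Tor_1^R(\Ext^1_R(M,S_iM),R')=0$, which is precisely where the strengthened hypothesis $\depth_JA\ge 3$ (rather than the $\depth_JA\ge 2$ sufficient for mere existence of the four-term sequence) is used; the other vanishings follow from the maximal Cohen-Macaulayness of the symmetric powers already supplied by Proposition \ref{resol}.
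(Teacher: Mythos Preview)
Your proof is correct and follows essentially the same strategy as the paper's, with a small packaging difference. The paper tensors the full free resolution $W_\bullet$ of $\Ext^1_R(M,S_iM)$ from Corollary~\ref{res} with $R'=R/(h)$ and invokes \cite{bh}, Lemma~1.3.5 to see that $W_\bullet\otimes_R R'$ remains a resolution (using $\depth_{\mathfrak m}\Ext^1_R(M,S_iM)\ge 1$), then observes that this coincides with the resolution $W'_\bullet$ of $\Ext^1_{R'}(M',S_iM')$. You instead tensor the four-term exact sequence \eqref{seqExt1(M,SiM)} directly, using exactly the same depth estimate from Corollary~\ref{res} to kill the relevant $\Tor_1^R(-,R')$. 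Both arguments use the hypothesis $\depth_JA\ge 3$ in the same two places: to guarantee $\depth_{J'}A'\ge 2$ so that \eqref{seqExt1(M,SiM)} exists over $R'$, and (via $J\subset\mathfrak m$, hence $\depth_{\mathfrak m}A\ge 3$) to ensure $\depth\Ext^1_R(M,S_iM)\ge\depth A-2\ge 1$. Your route is marginally more economical in that it avoids invoking the full mapping-cone resolution, but the underlying mechanism is identical.
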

\begin{proof} Let us consider the free $R$-resolution $W_{\bullet}$ of
  $\Ext^1_R(M,S_{i}M)$ of Corollary \ref{res}:
$$
0\longrightarrow Q_{c}\longrightarrow Q_{c-1}\oplus P_{c} \longrightarrow Q_{c-2}\oplus P_{c-1}\oplus F_c \longrightarrow Q_{c-3}\oplus P_{c-2}\oplus F_{c-1} \longrightarrow \cdots $$
$$ \longrightarrow Q_{0}\oplus P_{1}\oplus F_{2} \longrightarrow  P_{0}\oplus
F_{1} \longrightarrow F_{0} \longrightarrow \Ext^1_R(M,S_{i}M)\longrightarrow
0\, ;
$$
and the free $R'$-resolution $W'_{\bullet}$ of $\Ext^1_{R'}(M',S_{i}M')$:
$$
0\longrightarrow Q'_{c}\longrightarrow Q'_{c-1}\oplus P'_{c} \longrightarrow Q'_{c-2}\oplus P'_{c-1}\oplus F'_c \longrightarrow Q'_{c-3}\oplus P'_{c-2}\oplus F'_{c-1} \longrightarrow \cdots $$
$$ \longrightarrow Q'_{0}\oplus P'_{1}\oplus F'_{2} \longrightarrow
P'_{0}\oplus F'_{1} \longrightarrow F'_{0} \longrightarrow
\Ext^1_{R'}(M',S_{i}M')\longrightarrow 0
$$
also given by Corollary \ref{res} because $\depth _JA\ge 3$. Since $h\in R_1$ is
a general linear form and $\depth _{\mathfrak{m}} \Ext^1_R(M,S_{i}M)\ge 1$ by
Corollary \ref{res}, we have $$\Ext^1_R(M,S_{i}M) :h=\Ext^1_R(M,S_{i}M)$$ and
by \cite{bh}; Lemma 1.3.5, that $W_{\bullet } \otimes _RR/(h)=W_{\bullet }'$
is a free $R'=R/(h)$-resolution of $\Ext^1_R(M,S_{i}M)\otimes _RR/(h)$.
Therefore, we conclude that $$\Ext^1_R(M,S_{i}M)\otimes _R R/(h)\cong
\Ext^1_{R'}(M',S_{i}M').$$
\end{proof}
Now, we are ready to prove the indecomposability of the normal sheaf of a
standard determinantal scheme $X\subset \PP^n$ under some mild hypothesis
which does not involve the degrees of the generators (resp. first syzygies) of
$I(X)$. Recalling ${\mathcal N}_X(\mathcal M^k):= {\mathcal H}om
_{\odi{X}}(\mathcal{I}_X/\mathcal{I}_X^2, \widetilde{S_{k}M})$, we have

\begin{theorem}\label{indecomposableNormal} Let $X\subset \PP^n$ be a
  standard determinantal scheme of codimension $c\ge 2$ defined by a matrix
  $\cA$ with entries that are general forms. We keep the notation introduced
  above and set $J=I_{t-1}(\cA )$. Assume $a_j-b_i>0$ for all $i,j$ and
$$\depth _JA\ge \begin{cases} c+2 & \text{ if } c>2 \,,  \\ 3 & \text{ if } c=2 \,.  \end{cases}
  $$
  If $c=2$ and $n=4$, or $c\ge 2$ and $n\ge 2c+1$, then the normal sheaf
  ${\mathcal N}_X$ and more generally, the ``twisted'' normal sheaves
  ${\mathcal N}_X(\mathcal M^k)$, $-1 \le k \le c-1$, are indecomposable.
\end{theorem}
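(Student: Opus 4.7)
My plan follows a three-step strategy: algebraic reformulation via an MCM module, reduction by a general hyperplane section, and an endomorphism-ring analysis extending the method of Theorem~\ref{simpleNormal} so as to drop the degree hypothesis.

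First I would identify $\mathcal N_X(\mathcal M^i)$ with the sheafification of a graded MCM $A$-module. Using Proposition~\ref{ext-norm}(1) when $i=c-2$ and Remark~\ref{lastrem} (valid since $\depth_J A\ge 3$ under our hypothesis) in the remaining cases $-1\le i\le c-1$, we obtain $\mathcal N_X(\mathcal M^i)\cong \widetilde{N}$ with $N:=\Ext^1_R(M,S_{i+1}M)$, where by convention $S_0M=A$ and $S_{-1}M=\Hom_A(M,A)$. By Corollary~\ref{res}, $N$ is MCM over $A$ under our depth hypothesis, so by Proposition~\ref{bijection} indecomposability of the sheaf $\mathcal N_X(\mathcal M^i)$ is equivalent to the absence of non-trivial idempotents in ${}_0\Hom_A(N,N)$.

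Second, following the argument of Theorem~\ref{simpleNormal}, I would establish a chain of natural maps
\[ K = R_0\;\hookrightarrow\;{}_0\Hom_A(I/I^2,I/I^2)\;\hookrightarrow\;{}_0\Hom_A(N,N), \]
using $\Hom_R(I,I)\cong R$ together with the depth-based identifications employed in the proof of Theorem~\ref{simpleNormal}. Under the present hypotheses $n\ge 2c+1$ (resp.\ $n\ge 4$ for $c=2$) combined with our $\depth_J A$ bounds, Proposition~\ref{depthconormal} supplies $\depth_{I(Z)} I/I^2\ge 2$ on the locally complete intersection locus $X\setminus Z$, so the above inclusions go through exactly as in the proof of Theorem~\ref{simpleNormal}. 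The crucial new observation is that \emph{indecomposability}, as opposed to simplicity, only requires the cokernel of $K\hookrightarrow {}_0\Hom_A(N,N)$ to consist of nilpotent elements.

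Third, to verify this nilpotency I would reduce via Lemma~\ref{auxiliar} by cutting with a general hyperplane: the module $N$ specialises correctly along a general $h\in R_1$, and any nontrivial decomposition of $N$ would descend to one of $N\otimes_R R/(h)$. Our numerical assumptions leave exactly one unit of slack to take one hyperplane section while still meeting the hypothesis $n\ge 2c$ (resp.\ $n-1\ge 3$) of Theorem~\ref{simpleNormal}. The main obstacle is precisely the degree condition $\max\{n_{2j}\}<2\min\{n_{1i}\}$ of Theorem~\ref{simpleNormal}, which we cannot assume: when it fails, simplicity genuinely fails (Remark~\ref{rem}) and the cokernel above is nonzero. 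I would handle this by analysing the diagram of Remark~\ref{rem}, which identifies the cokernel with (a piece of) $K_A(n+1)\otimes_A I^2/I^3$; I expect that the composition of two such endomorphisms factors through a further positive symmetric power of $I$, forcing every element of the cokernel to square to zero inside ${}_0\Hom_A(N,N)$. This yields nilpotency of the cokernel, hence locality of the degree-$0$ endomorphism ring with residue field $K$, and therefore indecomposability of $\mathcal N_X(\mathcal M^i)$.
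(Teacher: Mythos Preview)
Your proposal diverges substantially from the paper's argument and has two genuine gaps.

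First, the claim that $N=\Ext^1_R(M,S_{i+1}M)$ is MCM for all $-1\le i\le c-1$ is false. Corollary~\ref{res} gives $\depth N=\depth A$ only when $i+1=c-1$; for the remaining values it only yields $\depth N\ge\depth A-2$ (or $\depth A-1$ when $i+1\in\{0,1\}$). Consequently the appeal to Proposition~\ref{bijection} to pass between indecomposability of $\widetilde N$ and absence of idempotents in ${}_0\Hom_A(N,N)$ is not justified for general $i$.

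Second, and more seriously, the nilpotency argument in your third step is only a hope, not a proof. You write ``I expect that the composition of two such endomorphisms factors through a further positive symmetric power of $I$''; this is exactly the hard point, and Remark~\ref{rem} (which treats only $c=2$) merely identifies the cokernel of $\phi$ without any control over multiplication in ${}_0\Hom_A(N,N)$. Your hyperplane reduction via Lemma~\ref{auxiliar} is also circular as stated: a decomposition descends, so you need indecomposability in one dimension lower, but you have no base case for that induction---you observe yourself that Theorem~\ref{simpleNormal} is unavailable without the degree hypothesis.

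The paper proceeds completely differently. It fixes $c$ and inducts on $n$, using Lemma~\ref{auxiliar} and the isomorphism $\Ext^1_R(M,S_{i+1}M)\cong\Hom_A(I/I^2,S_iM)$ for the inductive step, exactly as you suggest. But the base cases are supplied by \emph{external geometric results}: for $(c,n)=(2,4)$ the depth hypothesis forces $X$ smooth, and indecomposability of ${\mathcal N}_X$ follows from Basili--Peskine \cite{BP}; for $(c,n)=(c,2c+1)$ with $c\ge3$, again $X$ is smooth and one invokes B\u{a}descu \cite{B}, Theorem~3.2. Since ${\mathcal N}_X(\mathcal M^i)$ differs from ${\mathcal N}_X$ by an invertible twist on $X\setminus V(J)$, indecomposability transfers to the twisted sheaves as well. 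No analysis of the endomorphism ring beyond these citations is needed.
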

\begin{proof} The idea is to fix $c$ and use induction on $n$.  In fact in the special case $(c,n)=(2,4)$, $X$ is smooth because $\cA$ is general 
  and ${\mathcal N}_X$ is indecomposable by \cite{BP}; Th\'{e}or\`{e}me A. If  $(c,n)= (c,2c+1)$, $c\ge 2$, then $X$ is again smooth, $\Pic(X)\cong \ZZ^2$ (cf. Theorem \ref{picard}) and ${\mathcal N}_X$ is
  indecomposable by \cite{B}; Theorem 3.2. It follows that $ {\mathcal
    N}_X(\mathcal M^k) \cong ({\mathcal I}_X/{\mathcal I}_X ^2)^{\vee }
  \otimes \widetilde {S_kM}$ is indecomposable. The result now follows from
  induction using Lemma \ref{auxiliar} and taking into account that
  $\Ext^1_R(M,S_{k+1}M)\cong \Hom _A(I/I^2,S_kM)$ (see Proposition
  \ref{ext-norm}).
\end{proof}

\begin{example}\label{moreexamples} \rm (1) We consider $X\subset \PP^n$, $n\ge 4$,  the standard determinantal subscheme of codimension 2 associated to the matrix $\begin{pmatrix} x_0 & x_1 & x_2^2 \\ x_3 & x_4 & f \end{pmatrix} $ where $f$ is a general form of degree
2. We have seen in Remark \ref{rem} that ${\mathcal N}_X$ is not simple but it follows from Theorem \ref{indecomposableNormal} that ${\mathcal N}_X$ is indecomposable.

(2)  We consider a {\em rational normal scroll}
 $S(a_0, \ldots,a_k)$; i.e.  the image of the map
$$\sigma : \PP^1 \times \PP^k \longrightarrow \PP^N$$ given by
$$\sigma (x,y;t_0,t_1\cdots ,t_k):=(x^{a_0}t_0,x^{a_0 -1}yt_0, \cdots,y^{a_0}t_0,
,\cdots ,x^{a_k}t_k,x^{a_k -1}yt_k, \cdots,y^{a_k}t_k)$$
where $N=k+\sum _{i=0}^ka_i$. If
we choose coordinates $X_0^0,\cdots ,X_{a_{0}}^0, \cdots ,X_0^k,\cdots ,X_{a_{k}}^k$ in $\PP^N$, the ideal of $S(a_0, \ldots,a_k)$ is generated by the maximal minors of the $2\times c$ matrix with two rows and $k+1$ catalecticant blocks: $$M_{a_0, \cdots, a_k}:=\begin{pmatrix} X_0^0 & \cdots & X_{a_{0}-1 }^0 & \cdots &  X_0^k & \cdots & X_{a_{k}-1 }^k \\
X_1^0 & \cdots & X_{a_{0}}^0 & \cdots &  X_1^k & \cdots & X_{a_{k} }^k\end{pmatrix} .$$
By Theorem \ref{indecomposableNormal}, the normal bundle of
$S(1, \cdots , 1)$ and of $S(2, 1, \cdots  1)$ are indecomposable. (Notice that $S(1, \cdots , 1)$ corresponds to the Segre variety $\PP^1 \times \PP^k \hookrightarrow \PP^{2k+1}$ already discussed in \cite{B}; Corollary 3.3).
\end{example}

Arguing as in the proof of Theorem  \ref{indecomposableNormal}, we get

\begin{corollary} Let $X\subset \PP^n$ be a smooth standard determinantal scheme of codimension $c$ and let $\PP^{n-1}\cong H\subset \PP^n$ a general hyperplane and set
 $X'=X\cap H$. Assume $n-c\ge 2$ and $\depth _JA\ge 3$, $J=I_{t-1}(\cA)$. Then,  $({\mathcal N}_{X/\PP^n})_{|\PP^{n-1}}\cong {\mathcal N}_{X'/\PP^{n-1}}.$
\end{corollary}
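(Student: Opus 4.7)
The plan is to chain together three isomorphisms: first identify $\mathcal{N}_{X/\PP^n}$ with the sheafification of an Ext-group over $R$; second, use Lemma \ref{auxiliar} to transfer that Ext-group across the hyperplane; third, identify the resulting Ext-group over $R'$ with the sheafification of $\mathcal{N}_{X'/\PP^{n-1}}$. This mimics the idea in the inductive step of the proof of Theorem \ref{indecomposableNormal}.

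Concretely, I would first apply Remark \ref{lastrem} with $i=1$ (permitted by $\depth_J A\ge 3$) to get $N_A=\Hom_A(I/I^2,A)\cong \Ext^1_R(M,M)$, and sheafifying yields $\mathcal{N}_{X/\PP^n}=\widetilde{\Ext^1_R(M,M)}$. Next, Lemma \ref{auxiliar} applied with $i=1$ (again using $\depth_J A\ge 3$) gives
$$\Ext^1_R(M,M)\otimes_R R/(h)\cong \Ext^1_{R'}(M',M'),$$
where $h\in R_1$ is the general linear form cutting out $H\cong\PP^{n-1}$. Since the restriction of a sheaf $\widetilde E$ on $\PP^n$ to $\PP^{n-1}=V(h)$ is $\widetilde{E\otimes_R R/(h)}$, sheafification produces
$$(\mathcal{N}_{X/\PP^n})_{|\PP^{n-1}}\cong \widetilde{\Ext^1_{R'}(M',M')}.$$

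The final step is to identify the right-hand side with $\mathcal{N}_{X'/\PP^{n-1}}$, i.e.\ to run the first step in reverse on $X'$. Because $X$ is smooth and $H$ is general, $X'=X\cap H$ is smooth by Bertini; for a smooth standard determinantal scheme this means $V(J'\cdot A')=\emptyset$, so $\depth_{J'}A'=\dim A'=n-c$. When $c=2$ we have $i=c-1$ and Proposition \ref{ext-norm}(1) gives $\Ext^1_{R'}(M',M')\cong N_{A'}$ under the weaker hypothesis $\depth_{J'}A'\ge 2$, which holds since $n-c\ge 2$. When $c\ge 3$, Remark \ref{lastrem} applies directly as soon as $n-c\ge 3$; in the remaining boundary case one can still conclude at the sheaf level by invoking reflexivity, since both $\widetilde{\Ext^1_{R'}(M',M')}$ and $\mathcal{N}_{X'/\PP^{n-1}}$ are reflexive on the smooth variety $X'$ and agree on the locally complete intersection locus $X'\setminus V(J')=X'$.

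The main obstacle is precisely this last depth bookkeeping on $X'$: making sure the restriction behaves well enough that Remark \ref{lastrem}, or an equivalent reflexivity argument, can be deployed on the hyperplane section. Once this is handled, the proof is simply the composition of the three displayed isomorphisms together with the fact that $\mathcal{N}_{X/\PP^n}$ is supported on $X$, so restricting to $\PP^{n-1}$ is the same as restricting to $X\cap H=X'$.
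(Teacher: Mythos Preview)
Your proposal is correct and follows exactly the approach the paper intends: the paper's proof consists of the single sentence ``Arguing as in the proof of Theorem~\ref{indecomposableNormal}, we get'', and you have accurately reconstructed that argument by chaining Remark~\ref{lastrem}, Lemma~\ref{auxiliar}, and then Remark~\ref{lastrem}/Proposition~\ref{ext-norm} again on $X'$. One small imprecision: your claim that smoothness of $X'$ forces $V(J'\!\cdot A')=\emptyset$ uses $V(J')=\operatorname{Sing}(X')$, which the paper only asserts when all $a_j-b_i>0$ (a hypothesis present in Theorem~\ref{indecomposableNormal} but not explicitly in the Corollary); however your reflexivity fallback still goes through, since $\depth_{J'}A'\ge \depth_J A-1\ge 2$ guarantees $\operatorname{codim}_{X'}V(J')\ge 2$, and the two reflexive sheaves agreeing outside a codimension~$\ge 2$ closed subset of the smooth $X'$ are isomorphic everywhere.
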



\section{The $\mu$-semistability of the normal sheaf of a determinantal varieties}

 The normal bundle of a smooth variety $X \subset \PP^n$ has been intensively studied since it reflects many properties of the embedding; so far few examples of smooth varieties having $\mu$-(semi)stable normal bundle are known. The first example of a curve $C\subset \PP ^3$ with normal bundle ${\mathcal N}_{C}$ $\mu $-stable was given by Sacchiero (\cite{S}). In \cite{El}; Proposition 2, Ellia proved that the normal bundle of a linear determinantal curve $C\subset \PP^3$ is linear $\mu$-semistable and the  normal bundle ${\mathcal N}_{C}$ of a general ACM curve $C\subset \PP^3$ of degree 6 and genus 3 is $\mu$-stable (see  \cite{EH} and \cite{P} for more information about the $\mu$-stability of the normal sheaf of a curve in $\PP^3$). The first goal of this last section is to generalize Ellia's result for linear determinantal curves $C$ in $\PP^3$  to linear determinantal schemes $X\subset \PP^n$ of arbitrary dimension.

As in previous sections we will assume $c>1$, since the case $c=1$ corresponds to  a hypersurface $X\subset \PP^n$ and  ${\mathcal N}_X\cong \odi{X}(\delta )$, $\delta :=\deg(X)$, is $\mu$-stable. We will also assume $t>1$, since the case
$t=1$ corresponds to a codimension $c$ complete intersection $X\subset \PP^n$ and ${\mathcal N}_X=\oplus _{i=1}^{c} \odi{X}(d_i)$, $d_i\in \ZZ$, is not $\mu$-stable.

 Let us start recalling the definition of $\mu$-(semi)stability

\begin{defn} Let $X\subset \PP^n$ be a smooth projective scheme of dimension $d$ and let $\shE$ be a coherent sheaf on $X$. $\shE$ is said to be $\mu$-semistable if for any non-zero coherent subsheaf $\shF$ of $\shE$ we have the inequality $$ \mu(\shF):=\frac{deg(c_1(\shF))}{\rk(\shF)}\le \mu(\shE):=\frac{deg(c_1(\shE))}{\rk(\shE)}$$

\noindent  where as usual  $deg(c_1(\shF))=c_1(F).H^{d-1}$.
We say that $\shE$ is $\mu$-stable if strict inequality $<$ always holds.
\end{defn}

\begin{remark} \rm Recall that $\mu$-stable sheaves are simple and hence indecomposable but not vice versa.
\end{remark}

\begin{theorem} \label{semistablenormal} \footnote{Note added in proof: After
    this paper appeared online in Crelle's journal the authors were informed
    that Ph. Ellia in his paper "Double structures and normal bundles of
    spaces curves" J. London Math. Soc. 58, 18-26 (1998), Remarks and Examples
    20 (v) proved that the normal bundle of a smooth standard determinantal
    curve C in $\PP^3$ defined by a matrix with either linear or quadratic
    entries is $\mu $-semistable. Therefore, Theorem 5.3 can be seen as a
    generalization of his result in the linear case. The mentioned paper also
    contains interesting examples and results on the stability of normal
    bundles of space curves.}Let $X\subset \PP^n$ be a smooth linear
  determinantal scheme of codimension $c\ge 2$ associated to a $t\times
  (t+c-1)$ matrix $\cA$. Assume $n-c\ge 1$. Then, ${\mathcal N}_X$ is
  $\mu$-semistable.
\end{theorem}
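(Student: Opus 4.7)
The plan is to reduce the $\mu$-semistability of $\mathcal{N}_X$ to the $\mu$-semistability of a suitable Ulrich twist, and then invoke the pushforward characterization of Ulrich sheaves. Since $X$ is smooth and $\cA$ has linear entries (so $a_j-b_i=1>0$ for all $i,j$), the singular locus of $X$ contains $V(J)\cap X$, and hence $V(J)\cap X=\emptyset$. Therefore $\widetilde{M}$ is a genuine line bundle on all of $X$, and so is $\shL:=\widetilde{S_{c-2}M}\otimes\odi{X}(-H)=\widetilde{M}^{\otimes(c-2)}(-H)$. The depth hypothesis $\depth_J R/I(X)\ge 2$ in Theorem \ref{normalresolution} is trivially satisfied, so that theorem yields that $\mathcal{N}_X\otimes\shL$ is an initialized Ulrich bundle of rank $c$ on $X$. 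Since $\mu$-semistability is preserved under tensoring with a line bundle, it suffices to prove the $\mu$-semistability of every Ulrich bundle on $X$.

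To establish this, I would invoke Theorem \ref{equivconditionsulrich}(iv): for an Ulrich bundle $\shE$ of rank $r$ on $X$, there is a finite linear projection $\pi:X\longrightarrow\PP^d$ ($d=\dim X$, $\deg\pi=\deg X=:\delta$) such that $\pi_*\shE\cong\odi{\PP^d}^{r\delta}$. For any saturated coherent subsheaf $\shF\subset\shE$, the finiteness of $\pi$ produces an inclusion $\pi_*\shF\hookrightarrow\pi_*\shE\cong\odi{\PP^d}^{r\delta}$. The trivial bundle on $\PP^d$ is $\mu$-polystable with slope zero, so $\mu(\pi_*\shF)\le 0=\mu(\pi_*\shE)$.

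The last step is to translate this inequality back to $X$. Using the projection formula $\chi(\pi_*\shF(k))=\chi(\shF(k))$ and matching the leading two coefficients in $k$ via Hirzebruch--Riemann--Roch, one obtains an affine relation
\[
\mu(\pi_*\shF)\;=\;\frac{\mu(\shF)}{\delta}+C,
\]
where $C$ depends only on $\pi$ and on the Todd class of $X$, but not on $\shF$. Applying the same identity to $\shE$ shows that $\mu(\pi_*\shF)\le\mu(\pi_*\shE)$ is equivalent to $\mu(\shF)\le\mu(\shE)$. Hence $\mathcal{N}_X\otimes\shL$ is $\mu$-semistable, and untwisting by the line bundle $\shL$ gives the result for $\mathcal{N}_X$. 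The main obstacle is the step \emph{Ulrich $\Rightarrow$ $\mu$-semistable}; once the finite-projection description is at hand, it reduces to the Hirzebruch--Riemann--Roch comparison above, but one must first secure that $\widetilde{M}$ is globally locally free of rank one, so that the twist by $\shL$ is an honest line-bundle twist and preserves $\mu$-semistability.
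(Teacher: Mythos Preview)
Your argument is correct and follows the same overall route as the paper: twist $\mathcal{N}_X$ by the line bundle $\widetilde{S_{c-2}M}(-H)$, invoke Theorem~\ref{normalresolution} to see that the result is Ulrich, and then use that Ulrich sheaves are $\mu$-semistable together with the invariance of $\mu$-semistability under line-bundle twists. The paper's proof is essentially two lines because it outsources the implication ``Ulrich $\Rightarrow$ $\mu$-semistable'' to \cite{CH}, Theorem~2.9; you instead sketch that implication yourself via the finite-projection description of Ulrich bundles and a Riemann--Roch comparison of slopes, which is precisely the argument behind the cited result. Your preliminary observation that smoothness forces $V(J)\cap X=\emptyset$, so that $\widetilde{M}$ is an honest line bundle and the depth hypothesis of Theorem~\ref{normalresolution} is automatic, is a point the paper leaves implicit.
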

\begin{proof} Since the notion of $\mu$-semistability is preserved when we twist by an invertible sheaf, it will be enough to prove that ${\mathcal N}_X(-H)\otimes \widetilde{S_{c-2}M}$ is $\mu$-semistable.  It follows from   Theorem \ref{normalresolution} that ${\mathcal N}_X(-H)\otimes \widetilde{S_{c-2}M}$ is a rank $c$ Ulrich sheaf on $X$    and by \cite{CH}; Theorem 2.9 any Ulrich sheaf is $\mu$-semistable which proves what we want.
\end{proof}

\begin{remark} \rm Since Ulrich sheaves are Gieseker semistable, the above proof also shows that the normal sheaf ${\mathcal N}_X$
to a smooth  linear determinantal scheme of codimension
$c\ge 2$ is Gieseker semistable.
\end{remark}

\begin{remark} \rm  Without extra hypothesis the above result cannot be improved
and the $\mu$-stability of the normal sheaf ${\mathcal N}_X$ of a linear standard determinantal
scheme $X\subset \PP ^n$  cannot be guaranteed. In fact, if we consider a rational normal curve $C\subset \PP ^n$ defined by the  $2\times 2$ minors of a $2\times n$ matrix with general linear entries, it is well known that ${\mathcal N}_{X/\PP ^n }\cong \odi{X}(2)^{n-1}.$ Therefore, ${\mathcal N}_{X}$ is $\mu$-semistable but not $\mu$-stable.
\end{remark}

In the codimension 2 case, Theorem \ref{semistablenormal} can be improved using the following lemma:

\begin{lemma} \label{Ulrichlinebundles} Let $X\subset \PP^n$ be a smooth linear determinantal scheme of codimension $c\ge 2$ and dimension $n-c\ge 2$ defined by the maximal minors of a $t\times (t+c-1)$ matrix $\cA$. Assume  $t\ge n$ when $n-c=2$. Let $H$ be a general hyperplane section of $X$ and let $Y\subset \PP^n$ be the codimension 1 subscheme of $X$ defined by the maximal minors of the $t\times (t+c)$ matrix $\cB$ obtained adding to $\cA$ a column of general linear forms.  Let $\cL$ be a line bundle on $X$. It holds:
 \begin{itemize} \item[(i)] $\cL$ is an ACM line bundle on $X$ if and only if $\cL\cong \odi{X}(aY+bH)$ with $-1\le a\le c$ and $b\in \ZZ$;
  \item[(ii)]  $\cL$ is an initialized  Ulrich
line bundle if and only if $\cL\cong \odi{X}(-Y+tH)$ or $\odi{X}(cY-cH)$.
\end{itemize}
\end{lemma}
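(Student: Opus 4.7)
The argument rests on two ingredients: the Picard group computation $\Pic(X) = \ZZ\langle H, Y\rangle$, which follows from Theorem \ref{picard} once its hypotheses are verified (linearity gives $a_1 - b_t = 1 > 0$, and when $n - c = 2$ the assumption $t \ge n$ ensures $\ell = t + c - 1 \ge n + 1$); and explicit ACM resolutions of the relevant line bundles, arising from the determinantal structures of $\cA$ and of the enlarged matrix $\cB$ defining $Y$.

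For the ``if'' direction of (i), I would identify each reflexive rank-one sheaf $\widetilde{S_i M}$, $0 \le i \le c-1$, with a line bundle of the form $\odi{X}(iY + e_i H)$ on the smooth variety $X$. By Proposition \ref{resol}(ii) these sheaves are ACM, and their minimal free $\odi{\PP^n}$-resolutions -- purely linear, since $a_j = 1$, $b_i = 0$ -- pin down $e_i$ via a first Chern class calculation; the outcome is $e_i = -i$, so that $\odi{X}(iY - iH)$ is ACM for $0 \le i \le c-1$. The endpoint $a = c$ gives $\odi{X}(cY - cH)$, which equals $K_X$ up to an explicit twist by $\odi{X}(mH)$ by Proposition \ref{resol}(ii) and hence is ACM. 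The endpoint $a = -1$ is handled symmetrically by applying the same Eagon-Northcott/Buchsbaum-Rim machinery to the matrix $\cB$: $Y$ is standard determinantal of codimension $c+1$ in $\PP^n$, and its associated complex produces an explicit ACM resolution of $\odi{X}(-Y + tH)$. Tensoring any of these by $\odi{X}(bH)$ preserves the ACM property, covering all $\odi{X}(aY + bH)$ with $-1 \le a \le c$.

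For the converse in (i), suppose $\cL = \odi{X}(aY + bH)$ is ACM. The short exact sequences
\begin{equation*}
0 \to \odi{X}((a-1)Y + bH) \to \odi{X}(aY + bH) \to \odi{Y}(aY + bH) \to 0
\end{equation*}
applied iteratively reduce the claim to a cohomological statement on the subscheme $Y$ (which is itself standard determinantal, so that an induction on codimension can be set up): if $a \ge c+1$, then some intermediate $\Hl^{j}(X, \cL(-mH))$ is forced to be nonzero, contradicting ACM; the case $a \le -2$ reduces to $a \ge c+1$ via Serre duality after tensoring with $K_X$. Part (ii) is then a numerical refinement: among the ACM bundles just classified, the Ulrich normalization $\h^0(\cL_{\rm init}) = \deg(X) = \binom{t+c-1}{c}$, combined with Theorem \ref{equivconditionsulrich}(ii) (existence of a purely linear $\odi{\PP^n}$-resolution), singles out exactly the pairs $(a,b) = (-1, t)$ and $(a, b) = (c, -c)$; these are interchanged by the involution $\cL \mapsto \cL^\vee \otimes K_X$ on Ulrich bundles, which is consistent with the expected Ulrich--Ulrich duality.

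The main obstacle is the precise identification $\widetilde{S_i M} \cong \odi{X}(iY - iH)$: knowing that both sides are rank-one ACM sheaves is not enough, and one must carefully track first Chern classes through the Buchsbaum-Rim resolution and match them with the parallel resolution of $\odi{X}(iY - iH)$ derived from $\cB$. A secondary difficulty is the converse cohomology computation: to rule out $a \notin [-1, c]$ one needs to arrange the inductive vanishings on the nested determinantal subschemes $Y \subset X$ so that they produce a definite obstruction to ACM in some intermediate degree, which in turn requires the enlarged-matrix viewpoint from the ``if'' direction.
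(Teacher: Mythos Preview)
The paper does not actually prove this lemma: its entire proof is ``See \cite{KM2013}'', a reference listed in the bibliography as \emph{In preparation}. So there is no in-paper argument to compare your proposal against; you are attempting to supply what the authors deferred to a companion paper.

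Your outline is broadly reasonable, but a few points deserve care. First, Theorem~\ref{picard} gives $\Pic(X)\cong\langle H,Z\rangle$ where $Z$ is the divisor obtained by \emph{deleting a row} of $\cA$, whereas the lemma is phrased in terms of $Y$, obtained by \emph{adding a column}. You assert $\Pic(X)=\ZZ\langle H,Y\rangle$ directly from Theorem~\ref{picard}, but this requires first relating $Y$ to $Z$ and $H$ (in fact $\widetilde M\cong\odi{X}(Z)$ up to twist, and one expects a relation of the form $Y\sim -Z+tH$ or similar, which must be checked). Second, your key identification $\widetilde{S_iM}\cong\odi{X}(iY-iH)$ is exactly the step you flag as the ``main obstacle'', and it genuinely is: the natural identification is $\widetilde{S_iM}\cong\odi{X}(iZ)$ (up to twist), so again everything hinges on the $Y$--$Z$ relation. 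Third, the converse in (i) is only sketched; the inductive cohomology argument on the chain $Y\subset X$ needs the full Eagon--Northcott description of the intermediate cohomology of $\odi{X}(aY)$ for $a$ outside the range, and this is where the real work of the cited paper presumably lies.

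In short: your plan is plausible and identifies the right ingredients, but since the paper itself offers no proof, what you have is a sketch rather than something that can be checked against the authors' argument.
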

\begin{proof} See \cite{KM2013}.
\end{proof}

In fact, we have:

\begin{theorem} \label{stablenormal} Let $X$ be a smooth linear determinantal scheme of codimension 2 in $\PP ^n$ defined by the maximal minors of a $t\times (t+1)$ matrix with linear entries. Assume that $n\ge 4$. Then, ${\mathcal
N}_X$ is $\mu$-stable. In particular, ${\mathcal
N}_X$ is simple and indecomposable.
\end{theorem}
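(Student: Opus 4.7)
The plan is to argue by contradiction, upgrading the $\mu$-semistability of $\mathcal{N}_X$ from Theorem \ref{semistablenormal} to $\mu$-stability by exploiting the Ulrich structure of a suitable twist (Theorem \ref{normalresolution}) together with the classification of Ulrich line bundles on $X$ supplied by Lemma \ref{Ulrichlinebundles}.

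Since $c=2$ one has $S_{c-2}M=A$, so Theorem \ref{normalresolution} yields that the twisted normal sheaf $\mathcal{N}_X(-H)$ is an initialized Ulrich sheaf of rank $2$ on $X$. Assume, for contradiction, that $\mathcal{N}_X$ is not $\mu$-stable. By the $\mu$-semistability there is a saturated rank-one subsheaf $\mathcal{L}_1\subset \mathcal{N}_X$ with $\mu(\mathcal{L}_1)=\mu(\mathcal{N}_X)$. Since $X$ is smooth of dimension $\geq 2$, the saturated subsheaf $\mathcal{L}_1$ is a line bundle on $X$, and the reflexive hull of the quotient $\mathcal{N}_X/\mathcal{L}_1$ is a line bundle $\mathcal{L}_2$. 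This yields a short exact sequence
\begin{equation*}
0 \longrightarrow \mathcal{L}_1 \longrightarrow \mathcal{N}_X \longrightarrow \mathcal{L}_2 \longrightarrow 0
\end{equation*}
with $\mu(\mathcal{L}_1)=\mu(\mathcal{L}_2)=\mu(\mathcal{N}_X)$. Twisting by $\odi{X}(-H)$ gives an analogous sequence of sheaves of the common slope $\mu(\mathcal{N}_X(-H))$, whose middle term is Ulrich.

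The key technical step is to show that $\mathcal{L}_1(-H)$ and $\mathcal{L}_2(-H)$ are themselves initialized Ulrich line bundles. Since Ulrich sheaves are $\mu$-semistable (\cite{CH}, Theorem 2.9), the graded pieces of a Jordan--H\"older filtration of $\mathcal{N}_X(-H)$ are $\mu$-stable of the same slope; in the present rank-two situation these two factors are precisely $\mathcal{L}_1(-H)$ and $\mathcal{L}_2(-H)$. A direct cohomological analysis using Theorem \ref{equivconditionsulrich}(iii) applied to the twisted short exact sequence transfers the Ulrich vanishings from the middle term to the two end terms; here the common slope and the ACMness of $X$ are used to rule out the borderline cohomology groups. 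With this in hand, Lemma \ref{Ulrichlinebundles} forces each of $\mathcal{L}_1(-H)$ and $\mathcal{L}_2(-H)$ to equal either $\odi{X}(-Y+tH)$ or $\odi{X}(2Y-2H)$.

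The problem is thereby reduced to a short case check. When $\mathcal{L}_1\cong \mathcal{L}_2$ one has $\Ext^1(\mathcal{L}_2,\mathcal{L}_1)\cong H^1(\odi{X})=0$ (by the ACMness of $X$ and the hypothesis $\dim X\geq 2$), so the sequence splits, contradicting the indecomposability of $\mathcal{N}_X$ given in Theorem \ref{indecomposableNormal}. In the remaining, mixed case, the determinantal constraint $\mathcal{L}_1\otimes \mathcal{L}_2=\det \mathcal{N}_X=\omega_X(n+1)$, written in $\Pic(X)=\langle H,Z\rangle$ by Theorem \ref{picard} and an explicit Hilbert--Burch computation of $\omega_X$, is incompatible with the unique remaining product of Ulrich line bundles, producing the final contradiction. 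The principal obstacle is the Ulrich-ness of the destabilizing sub-line bundle discussed in paragraph three; once this is settled, Lemma \ref{Ulrichlinebundles} and Theorem \ref{indecomposableNormal} reduce the remainder of the argument to a bounded computation in $\Pic(X)$.
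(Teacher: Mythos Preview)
Your overall strategy coincides with the paper's: twist so that $\mathcal N_X(-H)$ is Ulrich of rank $2$, show that a destabilising sub and quotient are Ulrich line bundles, invoke Lemma~\ref{Ulrichlinebundles} to list the two candidates, and then eliminate the four possible extensions. Your handling of the ``equal'' cases via $\Ext^1(\mathcal L,\mathcal L)\cong H^1(\odi X)=0$ and Theorem~\ref{indecomposableNormal} is fine and in fact a pleasant alternative to the paper's first-Chern-class comparison.

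The genuine gap is in the \emph{mixed} case. There the determinant constraint does \emph{not} give a contradiction: with $\mathcal L_1(-H)=\odi X(-Y+tH)$ and $\mathcal L_2(-H)=\odi X(2Y-2H)$ one gets
\[
c_1\bigl(\mathcal N_X(-H)\bigr)=(-Y+tH)+(2Y-2H)=Y+(t-2)H,
\]
which is exactly the value the paper computes for $c_1(\mathcal N_X(-H))$ from the sequence \eqref{exactsequencecodim2}. Equivalently, $\det\mathcal N_X=\mathcal L_1\otimes\mathcal L_2$ holds on the nose, so your ``determinantal constraint'' is satisfied, not violated. The paper resolves this by computing $c_2(\mathcal N_X(-H))=-YH+\tfrac{t^2-t+2}{2}H^2$ and comparing it with $c_1(\mathcal L_1(-H))\cdot c_1(\mathcal L_2(-H))=-2Y^2+2(t+1)YH-2tH^2$; these disagree, which is what actually kills the mixed case. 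You need some second-order invariant (or a direct check that $\Hom(\mathcal L_1,\mathcal N_X)=0$) here; $c_1$ alone cannot do it.

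Two smaller points. First, the step ``the sub and quotient are Ulrich'' is not a consequence of a bare cohomological transfer via Theorem~\ref{equivconditionsulrich}(iii); the paper invokes \cite{CH}, Theorem~2.9(b), which is the clean statement that an equal-slope sub (with torsion-free quotient) of an Ulrich sheaf is Ulrich, and you should cite it rather than sketch an ad hoc argument. Second, Lemma~\ref{Ulrichlinebundles} (and likewise Theorem~\ref{picard}) carries the hypothesis $t\ge n$ when $\dim X=2$; the paper treats $(n,t)=(4,3)$ separately by quoting \cite{MP2012}, and you omit this boundary case entirely.
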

\begin{proof} For the case $(n,t)=(4,3)$ the reader can see \cite{MP2012}; Proposition 4.10. Assume $(n,t)\ne (4,3)$. Since $\mu$-(semi)stability is preserved when we twist by line bundles, we know that ${\mathcal
N}_X(-H)$ is $\mu$-semistable (Theorem \ref{semistablenormal}) and we want to prove that it is $\mu$-stable, i.e we must rule out the existence of a coherent subsheaf $\shF\subset  {\mathcal
N}_X(-H)$ with $\mu(\shF)=\mu({\mathcal
N}_X(-H))$. By pulling-back torsion, if necessary, we may assume that ${\mathcal
N}_X(-H)/\shF$ is torsion free in which case $\shF$ is locally free and we have an exact sequence
$$ 0\longrightarrow \shF \longrightarrow {\mathcal
N}_X(-H) \longrightarrow {\mathcal
N}_X(-H)/\shF \longrightarrow 0$$
of coherent sheaves with ${\mathcal
N}_X(-H)/\shF $ torsion free and $\mu(\shF)=\mu({\mathcal
N}_X(-H)).$ By \cite{CH}; Theorem 2.9(b) $\shF$ and ${\mathcal
N}_X(-H)/\shF$ are  both Ulrich line bundles.  So, according to our Lemma \ref{Ulrichlinebundles} we have 4 possibilities:

$$ \begin{array}{lll} (1) & & 0\longrightarrow \odi{X}(-Y+tH) \longrightarrow {\mathcal
N}_X(-H) \longrightarrow \odi{X}(-Y+tH) \longrightarrow 0,\\
 (2) & &  0\longrightarrow \odi{X}(2Y-2H) \longrightarrow {\mathcal
N}_X(-H) \longrightarrow \odi{X}(-Y+tH) \longrightarrow 0, \\
(3) & &0\longrightarrow \odi{X}(-Y+tH) \longrightarrow {\mathcal
N}_X(-H) \longrightarrow \odi{X}(2Y-2H) \longrightarrow 0, \text{ or } \\
 (4) & &  \ 0\longrightarrow \odi{X}(2Y-2H) \longrightarrow {\mathcal
N}_X(-H) \longrightarrow \odi{X}(2Y-2H) \longrightarrow 0.\end{array}$$
Let us check that none of them is allowed. To this end, we will start computing the Chern classes of ${\mathcal
N}_X(-H)$. We sheafify the exact sequence (\ref{mainseq}) and we get the exact sequence:
\begin{equation}\label{exactsequencecodim2}
0\longrightarrow \odi{X}(-H)\longrightarrow \odi{X}(Y-2H)^t
\longrightarrow \odi{X}(Y-H)^{t+1}\longrightarrow {\mathcal
N}_X(-H)\longrightarrow 0.\end{equation}
Therefore, the Chern polynomial $c_u({\mathcal
N}_X(-H))$ of ${\mathcal
N}_X(-H)$ is given by
$$\begin{array}{lll} c_u({\mathcal
N}_X(-H))& = &  \sum _{u} c_i({\mathcal
N}_X(-H))u^{i} \\
& = & \frac{(1-Hu)(1+(Y-H)u)^{t+1}}{(1+(Y-2H)u)^t} \\
& = & \frac{(1-Hu)(\sum _{k=0}^{t+1} {t+1\choose k}(Y-H)^ku^k)}{
\sum _{k=0}^{t} {t\choose k}(Y-2H)^ku^k}
\end{array}$$
and a straightforward computation gives us $$c_1({\mathcal
N}_X(-H))=  Y+(t-2)H, \text{ and }$$
$$c_2({\mathcal
N}_X(-H))= -YH+\frac{t^2-t+2}{2}H^2.$$
Comparing the first Chern class we eliminate the possibility (1) and (4) because in case (1) we would get $c_1({\mathcal
N}_X(-H))=-2Y+2tH$ and in case (4) we would get $c_1({\mathcal
N}_X(-H))=4Y-4H$. To rule out the two remaining cases, we compare the second Chern class; in both cases we would get $c_2({\mathcal
N}_X(-H))=2(1+t)YH-2Y^2-2tH^2$ which is impossible and this concludes the proof of the Theorem.
\end{proof}


\end{document}